\theoremstyle{plain}
\newtheorem{theorem}{Theorem}[section]
\newtheorem{corollary}[theorem]{Corollary}
\newtheorem{lemma}[theorem]{Lemma}
\newtheorem{proposition}[theorem]{Proposition}
\theoremstyle{remark}
\newtheorem{remark}[theorem]{Remark}
\theoremstyle{definition}
\newtheorem{definition}[theorem]{Definition}
\numberwithin{equation}{section}
\title[Bounds on Mixing Time for Time-Inhomogeneous Markov Chains]{Bounds on Mixing Time for Time-Inhomogeneous \\ Markov Chains}
\author{Raphael Erb}
\begin{document}
\maketitle

\begin{abstract}
Mixing of finite time-homogeneous Markov chains is well understood nowadays, with a rich set of techniques to estimate their mixing time. In this paper, we study the mixing time of random walks in dynamic random environments. To that end, we propose a concept of mixing time for time-inhomogeneous Markov chains.
We then develop techniques to estimate this mixing time by extending the evolving set method of \cite{Morris2003EvolvingSM}. We apply these techniques to study a random walk on a dynamic Erd\H{o}s-R\'enyi graph, proving that the mixing time is $O(\log(n))$ when the graph is well above the connectivity threshold. We also give an almost matching lower bound.
\end{abstract}

\section{Introduction}

In recent years, several papers have been published on the topic of mixing time of random walks in dynamic random environments (refer to \cite{10.1214/17-AAP1289}, \cite{10.1007/978-3-030-54921-3_7}, \cite{10.1214/20-AOP1441}, \cite{quenchedExitTimesPeresSousiSteif}, \cite{Peres:2020ub},  for a selection).
One implicit technical challenge these works have faced is that random walks in dynamic environments, in general, do not have a stationary distribution, which is required to define the mixing time. A common way around this issue used in some of the above references is to consider models where, despite the dynamic environment, the stationary distribution exists and is independent of time.

The goal of this paper is to consider situations where such time-independent stationary distributions do not exist. We will propose a candidate for a ``time-dependent'' stationary distribution and a corresponding definition of mixing time.
To show the workability of this approach, we develop a method to find bounds on this new mixing time, based on the theory of evolving sets (see \cite{Morris2003EvolvingSM}). This method has already found use for both static environments as well as, more recently, dynamic graphs with time-independent invariant measure (see~\cite{Peres:2020ub}).

To describe the main idea of our work, let us recall the usual definition of mixing time.
For a Markov chain $X$ on a finite state space with (time-independent) transition matrix $P$ and unique invariant distribution $\pi$, the $\varepsilon$-mixing time, $\varepsilon \in (0,1)$, is defined by
\begin{equation}\label{mixingdefstat}
t^\mathrm{static}_\mathrm{mix}(\varepsilon) = \inf \{t \geq 0 : \sup_x \|P^t(x, \cdot) - \pi(\cdot)\|_\mathrm{TV} \leq \varepsilon \}
\end{equation}
where $\|\mu - \nu\|_\mathrm{TV} = \frac{1}{2} \sum_x |\mu(x) - \nu(x)|$ is the total variation distance and $P^t$ denotes the $t$-th power of $P$. That is to say the mixing time is the first time where the distribution of $X_t$ is $\varepsilon$-close to $\pi$, uniformly in the starting position.

We recall furthermore (see e.g.~Chapter 4 of \cite{LevinPeresWilmer2006}) that 
\begin{equation}\label{triangle}
\sup_x \|P^t(x,\cdot) - \pi(\cdot)\|_{\mathrm{TV}} \leq \sup_{x,y} \|P^t(x,\cdot) - P^t(y,\cdot)\|_{\mathrm{TV}} \leq 2 \cdot \sup_x \|P^t(x,\cdot) - \pi(\cdot)\|_{\mathrm{TV}}
\end{equation}
and hence $t^\mathrm{static}_\mathrm{mix}$ is related to
\begin{equation}\label{mixingdefalt}
\tilde{t}^\mathrm{static}_{\mathrm{mix}}(\varepsilon) \coloneqq \inf \{t \geq 0 : \sup_{x,y} \|P^t(x,\cdot) - P^t(y,\cdot)\|_\mathrm{TV} \leq \varepsilon\}
\end{equation} 
by the inequality
\begin{equation}\label{mixingdefaltineq}
t^\mathrm{static}_{\mathrm{mix}}(2\varepsilon) \le \tilde{t}^\mathrm{static}_{\mathrm{mix}}(2\varepsilon) \le t^\mathrm{static}_{\mathrm{mix}}(\varepsilon), \qquad \text{for every } \varepsilon \le \frac{1}{2}.
\end{equation}

Let us come to the setting of this paper and consider a discrete time-inhomogeneous Markov chain $(X_t)_{t \in \mathbb{Z}}$ on $[n] = \{1, \dots, n\}$ for $n \in \mathbb{N}$ with time-dependent transition matrices $P~=~(P_t)_{t \in \mathbb{Z}}.$ 
We denote by $\mathbb{P}^P$ the distribution of this chain, that is to say
\begin{equation}\label{transitionMatrices}
\mathbb{P}^P(X_{t} = y | X_{t-1} = x) = P_{t}(x,y).
\end{equation}
For $s < t$, we write $P^{s,t} = P_{s+1}\cdots P_t$ for the matrix product. 

In the applications we have in mind, in particular for random walks on dynamic random graphs, the transition matrices $(P_t)_{t \in \mathbb{Z}}$ will themselves be random. As we will aim for quenched estimates on the mixing time, we first consider $(P_t)_{t \in \mathbb{Z}}$ deterministic, and only in Section~\ref{randEnvSection} we introduce a probability distribution on the set of transition matrices.

In order to define mixing time for $(X_t)_{t \in \mathbb{Z}}$, we replace $\pi$ in (\ref{mixingdefstat}) with a suitable time-dependent distribution $\pi_t$. Given the sequence $(P_s)_{s \in \mathbb{Z}}$, for $t \in \mathbb{Z}$ and for some arbitrary state $x$, set
\begin{equation}\label{pitdef}
\pi_t(\cdot) \coloneqq \lim_{s \to -\infty} P^{s,t}(x,\cdot) = \lim_{s \to -\infty} \mathbb{P}^P(X_t = \cdot | X_s = x)
\end{equation}
if the limit exists. Given some trivial irreducibility assumptions we will see that $\pi_t(\cdot)$ does not depend on the starting state $x$. 

Let $\varepsilon \in (0,1)$, $s \in \mathbb{Z}$. The $\varepsilon$-mixing time for a sequence $P = (P_t)_{t \in \mathbb{Z}}$ at time $s \in \mathbb{Z}$ is
\begin{equation}\label{mixingdef}
t_{\mathrm{mix}}^{P}(\varepsilon,s) \coloneqq \inf \{t \geq 0: \sup_x \|P^{s,s+t}(x,\cdot) - \pi_{s+t}(\cdot)\|_\mathrm{TV} \leq \varepsilon \}.
\end{equation}

In principle we could take an alternative approach and use a definition similar to (\ref{mixingdefalt}) even for a time-inhomogeneous chain. By (\ref{mixingdefaltineq}) it would be essentially equivalent to (\ref{mixingdef}), but we would circumvent the construction of $\pi_t$. However, our approach using $\pi_t$ has some advantages that allow us to apply methods from the time-homogeneous situation. In particular the evolving sets method uses a target distribution $\pi$ (or $\pi_t$ in our case) that is approached by the Markov chain.

In this paper, we are going to show a generalization of Theorem 17.10 in~\cite{LevinPeresWilmer2006} for time-inhomogeneous chains. In the case of time-homogeneous Markov chains, Jerrum and Sinclair showed in~\cite{JerrumSinclair} that a reversible chain on some graph with stationary distribution $\pi$ has mixing time less than $2\Phi_*^{-2}(\log \varepsilon^{-1} + \log \frac{1}{\min_{z} \pi(z)}) $ where $\Phi_*$ is the \emph{bottleneck ratio} or \emph{conductance} of the chain. This was then soon generalized to non-reversible chains (see \cite{Mihail1989ConductanceAC}). The introduction of evolving sets more than a decade later led to a new way of proving those same bounds (see Theorem 17.10 in the book \cite{LevinPeresWilmer2006} for a precise statement and the full proof). We will discuss this method in more detail in Section~\ref{evSetsSection}.

\subsection{Literature}

There are other possible approaches to define mixing time for time-inhomogeneous Markov chains with no time-independent stationary distribution. One such approach we want to briefly discuss is found in~\cite{10.1007/978-3-030-54921-3_7} where the authors study a random walk on a randomly evolving graph where the edges appear and disappear independently. They propose the following notion of mixing time: At each time $t$, consider the stationary distribution $\tilde{\pi}_t$ of a time-homogeneous Markov chain with transition matrix $P_t$, and then observe whether the distribution of the time-inhomogeneous Markov chain stays close to those different $\tilde{\pi}_t$. A Markov chain on a graph with $n$ vertices is said to have mixed if it did stay close to $\tilde{\pi}_t$ for at least $\sqrt{n}$ consecutive time steps. The choice of $\sqrt{n}$ is essentially arbitrary and might be considered artificial. We do not need to make such a choice, since we will see that the distance $\sup_x \|P^{s,s+t}(x,\cdot) - \pi_{s+t}(\cdot)\|_\mathrm{TV}$ for our proposed $\pi_t$ is non-increasing, which implies that a mixed chain will remain mixed.

However, even though there are fundamental differences between the definition of mixing time in~\cite{10.1007/978-3-030-54921-3_7} and the definition in the present paper, the example in Section~\ref{exampleSection} yields the same order of upper bound for mixing time as Theorem 1.2 and Theorem 1.3 in~\cite{10.1007/978-3-030-54921-3_7}.

\subsection{Organisation}
This paper is divided into four further sections: In Section 2, we define the proposed notion of mixing time. In particular, we give criteria under which our time-dependent target distribution exists. Technical proofs are deferred to Appendix~\ref{appendix:existence}.
In Section 3, we develop techniques for proving upper bounds on mixing time and we adapt the theory of evolving sets to the time-inhomogeneous setting. in~\cite{Peres:2020ub}, by assuming the existence of a time-independent stationary distribution, some generalizations have been achieved. We show that our time-dependent target measure is a suitable replacement for the stationary distribution to derive analogous results. We use evolving sets to prove a generalization of Theorem 17.10 in~\cite{LevinPeresWilmer2006} for time-inhomogeneous Markov chains.
In Section~4, we move from deterministic time-inhomogeneous Markov chains to chains where the transition matrix for each time step is random.

Finally in Section 5, we present a concrete example: A random walk on a randomly evolving Erd\H{o}s-R\'enyi graph where each graph is independent of its predecessor. Assuming that each graph is very likely to be above the connectivity threshold (that is, each vertex has more than $c_1 \log n$ neighbours for some $c_1$ large enough), we show that the mixing time of the random walk is $O(\log n)$. This is a result that is well-known for static Erd\H{o}s-R\'enyi graphs.

\section{Target distribution and mixing time}\label{definitionsSection}

\subsection{Target distribution} In this section, we will define the target distribution $\pi_t$ and give an irreducibility condition that is sufficient to show its existence.

Without loss of generality, we consider Markov chains on the state space $[n] \coloneqq \{1, \dots, n\}$
for a fixed $n \in \mathbb{N}.$ Furthermore, let $P = (P_s)_{s \in \mathbb{Z}}$ be a sequence of transition matrices on~$[n]$ and $X=~(X_s)_{s \in \mathbb{Z}}$ the time-inhomogeneous Markov chain governed by the transition matrices, as in~(\ref{transitionMatrices}). Let $\mathbb{P}_{x,s}^{P}$ be the distribution of $(X_t)_{t \ge s}$ when started from $x$ at time $s$.

We define a sequence of probability measures on $[n]$ that takes on the role of a stationary distribution for the Markov chain $X$.  Set for $x,y \in [n]$, $t \in \mathbb{Z}$, 
\begin{equation} \label{QtDefinition}
\lim_{s \to -\infty} P^{s,t}(x,y) = \lim_{s \to -\infty} \mathbb{P}^P_{x,s}(X_t = y) \eqqcolon Q^t(x,y)
\end{equation}
for some limiting matrix $Q^t$, if the limit exists. 

If the value of $Q^t(x,y)$ does not depend on $x$, i.e.~$Q^t$ is a rank one matrix, we set $\pi_t(y) = Q^t(x,y)$ and say that $\pi_t$ exists.  To formulate conditions for this to happen we introduce a quantity which measures the similarity between rows of a stochastic matrix $P$,
\begin{equation}
\delta(P) \coloneqq \sup_{x,y} \sum_{z} [P(x,z) - P(y,z)]^{+} = \sup_{x,y} \|P(x,\cdot) - P(y,\cdot)\|_\mathrm{TV} \in [0,1]
\end{equation}
where $[a]^{+} \coloneqq \max(0,a).$ $\delta(P) = 0$ if and only if all rows of $P$ are equal. $\delta$ is known as Dobrushin's ergodic coefficient (see \cite{dobrushin1956}). 

The question whether a matrix $Q^t$ as in (\ref{QtDefinition}) exists is then related to the concepts of weak ergodicity (rows of $P^{s,t}$ are approaching each other as $t \to \infty$) and strong ergodicity ($P^{s,t}$ converges to a limit as $t \to \infty$) that are described in e.g.~ Chapter 12 of \cite{bremaud}, with one fundamental difference: By taking the limit $s \to -\infty$ instead of $t \to \infty$, weak and strong ergodicity are equivalent. In that vein, we only need to find conditions such that $\lim_{s \to -\infty} \delta(P^{s,t}) = 0$ and can deduce the existence of the limit $Q^t$ from that. The following proposition provides such conditions:

\begin{proposition} \label{deterministicExistenceProp} Let $(P_s)_{s \in \mathbb{Z}}$ be a sequence of stochastic matrices. For $t \in \mathbb{Z}$, $\varepsilon > 0$, iteratively define $t_0 = t$ and
$t_k = \sup \{t_k < t_{k-1} : \delta(P^{t_k, t_{k-1}}) \leq 1 - \varepsilon\}$, terminating when $t_k = -\infty$. Assume that for every $t \in \mathbb{Z}$, there exists $\varepsilon > 0$ such that
$\{t_1, t_2, ...\}$ is an infinite set. Then, for every $t \in \mathbb{Z}$, $\lim_{s \to -\infty} P^{s,t} \eqqcolon Q^t$ exists, $\delta(Q^t) = 0$, $\pi_t(y) \coloneqq Q^{t}(1,y)$ for every $y \in [n]$ is well-defined.
\end{proposition}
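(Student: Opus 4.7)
The main tool is the submultiplicativity of Dobrushin's coefficient, namely $\delta(AB) \leq \delta(A)\,\delta(B)$ for any stochastic matrices $A, B$; this is a classical fact and follows from the representation $\delta(P) = \sup_{x,y}\|P(x,\cdot) - P(y,\cdot)\|_{\mathrm{TV}}$ together with a short coupling or direct algebraic argument. Granted this, the plan is to prove the three conclusions in the following order: first that $\delta(P^{s,t}) \to 0$ as $s \to -\infty$, then that each row $P^{s,t}(x,\cdot)$ converges in total variation, and finally that the limiting matrix $Q^t$ is rank one.

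For the first step, fix $t \in \mathbb{Z}$ and the $\varepsilon > 0$ supplied by the hypothesis, so that the cut-off times $t_0 = t > t_1 > t_2 > \cdots$ are all finite and hence strictly decrease to $-\infty$. Since each $t_i$ with $i \geq 1$ is the supremum of a non-empty subset of $\mathbb{Z}$ bounded above by $t_{i-1}$, the supremum is attained and $\delta(P^{t_i, t_{i-1}}) \leq 1 - \varepsilon$. For any $k \geq 1$ and any $s \leq t_k$, the factorisation
\[
P^{s,t} = P^{s, t_k}\, P^{t_k, t_{k-1}} \cdots P^{t_1, t_0}
\]
combined with submultiplicativity and $\delta(P^{s, t_k}) \leq 1$ yields $\delta(P^{s,t}) \leq (1-\varepsilon)^k$. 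Sending $s \to -\infty$ allows $k \to \infty$, and the claim follows.

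Next, I would check that $(P^{s,t}(x,\cdot))_{s \leq t}$ is a Cauchy sequence in $\|\cdot\|_{\mathrm{TV}}$ for each fixed $x$. For $s' < s < t$, using $\sum_z P^{s',s}(x,z) = 1$ one writes
\[
P^{s',t}(x,\cdot) - P^{s,t}(x,\cdot) = \sum_{z} P^{s',s}(x,z)\bigl[P^{s,t}(z,\cdot) - P^{s,t}(x,\cdot)\bigr],
\]
and taking total variation norms together with the triangle inequality bounds the right-hand side by $\delta(P^{s,t})$, which tends to zero by the previous step. Therefore the limit $Q^t(x,\cdot)$ exists in total variation. The same estimate, applied with a common $s$ but two different starting states $x, y$, yields $\|P^{s,t}(x,\cdot) - P^{s,t}(y,\cdot)\|_{\mathrm{TV}} \leq \delta(P^{s,t}) \to 0$, so all rows of $Q^t$ coincide, i.e.~$\delta(Q^t) = 0$, and $\pi_t(y) \coloneqq Q^t(1,y)$ is well defined.

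I do not anticipate a serious obstacle: once submultiplicativity is available, the entire argument is a telescoping estimate packaged with a Cauchy-sequence argument. The only detail that requires care is verifying that the supremum defining $t_k$ is attained whenever finite, which is automatic on $\mathbb{Z}$ under the stated hypothesis.
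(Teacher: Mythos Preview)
Your proof is correct and follows essentially the same route as the paper: use submultiplicativity of $\delta$ along the cut-off times $t_k$ to force $\delta(P^{s,t})\to 0$, then run a Cauchy-sequence argument to get convergence of $P^{s,t}$ to a rank-one limit. The paper packages the Cauchy step into a separate lemma (Lemma~\ref{Lemmaexist}, proved via the convexity identity $\sup_{\mu,\nu}\|\mu P^{s,t}-\nu P^{s,t}\|_{\mathrm{TV}}=\delta(P^{s,t})$), whereas your telescoping identity $P^{s',t}(x,\cdot)-P^{s,t}(x,\cdot)=\sum_z P^{s',s}(x,z)[P^{s,t}(z,\cdot)-P^{s,t}(x,\cdot)]$ gives the same bound more directly.
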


The proof of this proposition follows immediately from Lemma~\ref{deltasubmult} and Lemma~\ref{Lemmaexist} below. These lemmas are standard results. For Lemma~\ref{deltasubmult} we refer to \cite{paz_reichaw_1967}. The proof of Lemma~\ref{Lemmaexist} is given in Appendix~\ref{appendix:existence}.

\begin{lemma}[Submultiplicativity]\label{deltasubmult}
For any two stochastic matrices $P,Q$ it holds that 
$$\delta(PQ) \leq \delta(P)\delta(Q).$$
\end{lemma}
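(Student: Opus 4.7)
The plan is to fix two rows indexed by $x,y$ of the product $PQ$, bound $\|(PQ)(x,\cdot) - (PQ)(y,\cdot)\|_\mathrm{TV}$ by $\delta(P)\delta(Q)$, and take the supremum over $x,y$ to obtain $\delta(PQ) \le \delta(P)\delta(Q)$.

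First I would write the row difference as a signed measure weighted against rows of $Q$, namely
$$(PQ)(x,\cdot) - (PQ)(y,\cdot) = \sum_z \bigl(P(x,z) - P(y,z)\bigr)\, Q(z,\cdot),$$
and set $a(z) \coloneqq P(x,z) - P(y,z)$. Since $P$ is stochastic, $\sum_z a(z) = 0$, so the positive mass $m \coloneqq \sum_z [a(z)]^+$ equals the negative mass $\sum_z [-a(z)]^+$, and by the very definition of $\delta$ we have $m \le \delta(P)$. If $m = 0$ then the two rows of $PQ$ agree and the bound is trivial; otherwise define probability measures $\mu^+(z) \coloneqq [a(z)]^+/m$ and $\mu^-(z) \coloneqq [-a(z)]^+/m$, so that $a = m(\mu^+ - \mu^-)$ and consequently
$$(PQ)(x,\cdot) - (PQ)(y,\cdot) = m\bigl(\mu^+ Q - \mu^- Q\bigr).$$

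The key step is then to use $\sum_w \mu^-(w) = 1 = \sum_z \mu^+(z)$ to rewrite $\mu^+ Q - \mu^- Q$ as a convex combination of row differences of $Q$:
$$\mu^+ Q - \mu^- Q \;=\; \sum_{z,w} \mu^+(z)\,\mu^-(w)\, \bigl(Q(z,\cdot) - Q(w,\cdot)\bigr).$$
Taking total variation norms, applying the triangle inequality and the definition of $\delta(Q)$ gives $\|\mu^+ Q - \mu^- Q\|_\mathrm{TV} \le \delta(Q)$, whence $\|(PQ)(x,\cdot) - (PQ)(y,\cdot)\|_\mathrm{TV} \le m\,\delta(Q) \le \delta(P)\,\delta(Q)$. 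Taking the supremum over $x,y$ yields the claim.

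I do not foresee a genuine obstacle; the argument is essentially bookkeeping with the positive/negative decomposition. The only slightly delicate point is the re-expansion into a double sum weighted by $\mu^+(z)\mu^-(w)$, which is what exploits the definition of $\delta(Q)$ in terms of differences of rows rather than the weaker $\ell^1$-contraction $\|\sigma Q\|_1 \le \|\sigma\|_1$, which would only yield $\delta(PQ) \le \delta(P)$.
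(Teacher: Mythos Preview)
Your argument is correct: the Jordan decomposition of the signed row difference $a = m(\mu^+ - \mu^-)$ together with the double-sum re-expansion is the standard way to obtain the Dobrushin submultiplicativity, and every step you outline goes through without issue. Note that the paper does not actually prove this lemma; it simply cites \cite{paz_reichaw_1967} as a reference for the result, so your self-contained proof supplies what the paper omits.
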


\begin{lemma}\label{Lemmaexist}
Let $t \in \mathbb{Z}.$ If $\lim_{s \to -\infty} \delta(P^{s,t}) = 0,$ then there exists a rank 1 matrix $Q^t$ such that
$$\lim_{s \to -\infty} P^{s,t}(x,y) = Q^t(x,y), \qquad \text{ for all } x,y \in [n].$$
\end{lemma}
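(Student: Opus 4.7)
The plan is to show that for every fixed $y \in [n]$ the $y$-th column of $P^{s,t}$ converges entrywise to a constant vector as $s \to -\infty$, and to read off $Q^t(\cdot, y)$ from the common limit. Fix $t \in \mathbb{Z}$ and $y \in [n]$, and for $s \le t$ set
$$M_s \coloneqq \max_{x \in [n]} P^{s,t}(x, y), \qquad m_s \coloneqq \min_{x \in [n]} P^{s,t}(x, y).$$
It suffices to prove that $M_s$ and $m_s$ converge to a common limit $L_y$, since then $P^{s,t}(x,y) \to L_y$ for every $x$ by the squeeze theorem, and the matrix $Q^t(x, y) \coloneqq L_y$ is stochastic and of rank one.

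The first step is a nested interval property coming from the semigroup identity $P^{s_1, t} = P^{s_1, s_2}\, P^{s_2, t}$ whenever $s_1 < s_2 \le t$. Expanding the product at column $y$,
$$P^{s_1, t}(x, y) = \sum_{z} P^{s_1, s_2}(x, z)\, P^{s_2, t}(z, y),$$
expresses $P^{s_1, t}(x, y)$ as a convex combination of the entries $\{ P^{s_2, t}(z, y) : z \in [n] \}$. In particular $m_{s_2} \le P^{s_1, t}(x, y) \le M_{s_2}$ for all $x$, so $[m_{s_1}, M_{s_1}] \subseteq [m_{s_2}, M_{s_2}]$. Hence $M_s$ is non-increasing and $m_s$ is non-decreasing as $s \to -\infty$, and both are bounded in $[0,1]$, so each has a limit.

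The second step bounds the width $M_s - m_s$ by Dobrushin's coefficient. Pick $x^*, x_* \in [n]$ attaining $M_s$ and $m_s$; since $\|\mu - \nu\|_{\mathrm{TV}} \ge |\mu(y) - \nu(y)|$ for any single coordinate $y$,
$$M_s - m_s = P^{s,t}(x^*, y) - P^{s,t}(x_*, y) \le \|P^{s,t}(x^*, \cdot) - P^{s,t}(x_*, \cdot)\|_{\mathrm{TV}} \le \delta(P^{s,t}).$$
By hypothesis the right-hand side tends to $0$, so the two monotone limits of $M_s$ and $m_s$ coincide, giving the common value $L_y$, and $Q^t(x, y) \coloneqq L_y$ has the required property.

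I expect no genuine obstacle; the argument is essentially the standard weak-to-strong ergodicity passage, made trivial here by the fact that $s \to -\infty$ with $t$ fixed (rather than $t \to \infty$), which is precisely why the nested interval structure runs the right way. The only bookkeeping point to be careful about is the direction of the monotonicity in $s$.
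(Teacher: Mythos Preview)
Your proof is correct and takes a somewhat different route from the paper's. The paper argues that $(P^{s,t})_{s \le t}$ is a Cauchy sequence in the space of stochastic matrices: given $\varepsilon > 0$ it fixes $u < t$ with $\delta(P^{u,t}) \le \varepsilon/2$, and then uses an auxiliary lemma (namely that $\sup_{\mu,\nu} \|\mu P^{u,t} - \nu P^{u,t}\|_{\mathrm{TV}} = \delta(P^{u,t})$, the supremum being over all probability measures) to bound $\sup_{x,y}\|P^{s,t}(x,\cdot) - P^{r,t}(y,\cdot)\|_{\mathrm{TV}}$ uniformly for $s < r < u$. Convergence then comes from completeness, and the rank-one property is read off afterwards from $\delta(Q^t)=0$.

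Your argument is more elementary and self-contained: the nested-interval monotonicity of each column entry, obtained directly from the semigroup identity, avoids both the Cauchy-sequence step and the auxiliary convexity lemma, and delivers the rank-one conclusion for free since the common limit $L_y$ visibly does not depend on $x$. The paper's approach, on the other hand, handles the whole matrix in one stroke and reuses a lemma that is needed elsewhere in the paper anyway. Both are standard; yours is arguably the cleaner proof of this isolated statement.
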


\begin{proof}[Proof of Proposition~\ref{deterministicExistenceProp}]
The submultiplicativity applied to  $P^{t_k,t_{k-1}}$ for $k = 1, 2, \dots$ yields $$\lim_{s \to -\infty} \delta(P^{s,t}) = 0.$$ The claim follows from Lemma~\ref{Lemmaexist} with $Q^t(x,\cdot) = \pi_t(\cdot)$ for all $x \in [n]$.
\end{proof}

From now on, we assume that $\pi_t$ exists for all $t\in\mathbb{Z}$ and state its properties.
\begin{lemma}\label{matrixMultProp}
$(\pi_s)_{s \in \mathbb{Z}}$ satisfies
\begin{equation}\label{invarianceProperty}
\pi_t = \pi_rP^{r,t}
\end{equation}
for all $t > r \in \mathbb{Z}.$
\end{lemma}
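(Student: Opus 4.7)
The plan is to use the decomposition $P^{s,t} = P^{s,r} P^{r,t}$ valid for any $s < r < t$, and then pass to the limit $s \to -\infty$ on both sides. Since the state space $[n]$ is finite, exchanging the limit with the finite sum implicit in matrix multiplication is immediate, so there is no real analytic obstacle.

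In more detail, I would first write, for any $s < r$ and any $x, y \in [n]$,
\begin{equation*}
P^{s,t}(x,y) = \sum_{z \in [n]} P^{s,r}(x,z)\, P^{r,t}(z,y).
\end{equation*}
By the assumed existence of $\pi_t$ (via Proposition~\ref{deterministicExistenceProp}), letting $s \to -\infty$ gives $P^{s,t}(x,y) \to \pi_t(y)$ on the left-hand side and $P^{s,r}(x,z) \to \pi_r(z)$ for each fixed $z$ on the right-hand side. Because the sum over $z$ is finite, the limit moves inside and the right-hand side becomes $\sum_{z}\pi_r(z) P^{r,t}(z,y) = (\pi_r P^{r,t})(y)$. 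Equating the two limits yields $\pi_t(y) = (\pi_r P^{r,t})(y)$ for every $y$, which is the claimed identity.

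The only thing worth being careful about is that $\pi_r$ is really the common row of the rank-one limit $Q^r$, i.e.\ $\pi_r(z) = \lim_{s \to -\infty} P^{s,r}(x,z)$ independently of the starting state $x$; this is exactly the content of Proposition~\ref{deterministicExistenceProp}, so we can invoke it directly. There is no genuine obstacle here, the statement is essentially a consistency check confirming that the $\pi_t$ form a compatible family under the time-inhomogeneous dynamics, analogous to the stationary equation $\pi = \pi P$ in the time-homogeneous case.
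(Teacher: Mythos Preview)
Your proof is correct and follows essentially the same idea as the paper: decompose the transition, pass to the limit $s \to -\infty$, and use finiteness of the state space to swap the limit with the sum. The only cosmetic difference is that the paper first establishes the one-step relation $\pi_{t+1} = \pi_t P_{t+1}$ (written in probabilistic notation via the Markov property) and then iterates, whereas you prove the general relation $\pi_t = \pi_r P^{r,t}$ in one shot using the matrix factorization $P^{s,t} = P^{s,r}P^{r,t}$.
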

\begin{proof}
By the Markov property it is evident that
\begin{align*}
\pi_{t+1}(y) &= \lim_{s \to -\infty} \mathbb{P}^P_{x,s}(X_{t+1} = y) 
= \lim_{s \to -\infty} \sum_z \mathbb{P}^P_{x,s}(X_{t+1} = y, X_t = z) \\
&= \lim_{s \to -\infty} \sum_z \mathbb{P}^P_{x,s}(X_{t} = z)P_{t+1}(z,y)
= \sum_z \pi_t(z) P_{t+1}(z,y)
\end{align*}
for every $y \in [n]$ and for every $t \in\mathbb{Z}$, which shows $\pi_{t+1} = \pi_t P^{t,t+1}.$ The claim follows by iteration.
\end{proof}

Lemma~\ref{matrixMultProp} allows to define a ``stationary'' version of $X$: Indeed, to this end it is sufficient to observe that by Lemma~\ref{matrixMultProp}, the sequence of distributions $\mathbb{P}^P_{\pi_t,t}$ of the Markov chain started at time $t$ with initial distribution $\pi_t$, is compatible in the sense that
$$\mathbb{P}^P_{\pi_{t-1},t-1}\big|_{[n]^{\{t, t+1, \dots\}}} = \mathbb{P}^P_{\pi_t,t}.$$
The Kolmogorov extension theorem then implies the existence of the measure $\mathbb{P}^P$ on $[n]^\mathbb{Z}$ under which the chain is ``stationary'', meaning that 
\begin{equation}\label{ppEquation}
\mathbb{P}^P(X_t = y) = \lim_{s \to -\infty} \mathbb{P}^P_{1,s}(X_t = y) = \pi_t(y), \qquad t\in\mathbb{Z}, y\in[n].
\end{equation}
We call $\mathbb{P}^{P}$ the \emph{law of the stationary chain}.

\subsection{Mixing Time} With the measures 
$(\pi_t)_{t \in \mathbb{Z}}$ at our disposal, we define mixing time for time-inhomogeneous Markov chains, as alluded to in (\ref{mixingdef}). Set for $s \in \mathbb{Z}, t \geq 0$, $$d(s,s+t) \coloneqq \sup_x \|P^{s,s+t}(x,\cdot) - \pi_{s+t}(\cdot)\|_\mathrm{TV}.$$

\begin{definition}\label{mixingDefReal}
Let $\varepsilon \in (0,1)$, $s \in \mathbb{Z}$. The $\varepsilon$-mixing time for a sequence $P = (P_t)_{t \in \mathbb{Z}}$ is defined by
$$t_{\mathrm{mix}}^{P}(\varepsilon,s) \coloneqq \inf \{t \geq 0: d(s,s+t) \leq \varepsilon \}.$$
\end{definition}
To make this definition meaningful, we need to confirm that if $d(s, s + t) \le \varepsilon$ for some $t$, then $d(s, s + u) \le \varepsilon$ for all $u \ge t$. Hence whether $d(s,s+\cdot)$ is monotonic is a question that arises naturally. As briefly discussed in the introduction, we can answer this affirmatively. Furthermore, the inequality (\ref{triangle}), with appropriate modifications, also holds in a time-inhomogeneous setting. The following lemma collects these results.
\begin{lemma} \label{lemma3} 
Let $t \geq s \geq u.$ Then
\begin{enumerate}[label=(\alph*)]
\item $d(u,t) \leq d(u,s)$,
\item $d(u,t) \leq d(s,t)$,
\item $d(s,t) \leq \delta(P^{s,t})\leq 2d(s,t)$.
\end{enumerate}
\end{lemma}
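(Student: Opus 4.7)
My plan is to observe that all three parts follow from two elementary facts: the invariance relation $\pi_t = \pi_s P^{s,t}$ from Lemma~\ref{matrixMultProp}, and the classical contraction property $\|\mu P - \nu P\|_\mathrm{TV} \le \|\mu - \nu\|_\mathrm{TV}$ for probability measures $\mu,\nu$ and a stochastic matrix $P$ (equivalently, the fact that $\|\cdot\|_\mathrm{TV}$ of a signed measure is $\ell^1$-contracted by any stochastic kernel). The main conceptual step is to choose, in each case, the ``right'' place to insert $\pi_s$ or $\pi_t$ so that the invariance relation lines up.

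For part (a), I would use the decomposition $P^{u,t} = P^{u,s} P^{s,t}$ together with $\pi_t = \pi_s P^{s,t}$ to write
\[
P^{u,t}(x,\cdot) - \pi_t(\cdot) = \bigl(P^{u,s}(x,\cdot) - \pi_s(\cdot)\bigr) P^{s,t},
\]
and then apply TV-contraction by $P^{s,t}$, followed by taking the supremum in $x$.

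For part (b), the same factorization gives
\[
P^{u,t}(x,\cdot) - \pi_t(\cdot) = \sum_y P^{u,s}(x,y)\bigl(P^{s,t}(y,\cdot) - \pi_t(\cdot)\bigr)
\]
(using that $P^{u,s}(x,\cdot)$ has total mass one), so by the triangle inequality the LHS is bounded by a convex combination of $\|P^{s,t}(y,\cdot) - \pi_t\|_\mathrm{TV} \le d(s,t)$, and taking the sup in $x$ finishes it.

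For part (c), the upper bound $\delta(P^{s,t}) \le 2 d(s,t)$ is immediate from the triangle inequality $\|P^{s,t}(x,\cdot) - P^{s,t}(y,\cdot)\|_\mathrm{TV} \le \|P^{s,t}(x,\cdot) - \pi_t\|_\mathrm{TV} + \|\pi_t - P^{s,t}(y,\cdot)\|_\mathrm{TV}$. For the lower bound $d(s,t) \le \delta(P^{s,t})$, I again use Lemma~\ref{matrixMultProp} to write $\pi_t(\cdot) = \sum_y \pi_s(y) P^{s,t}(y,\cdot)$, so
\[
P^{s,t}(x,\cdot) - \pi_t(\cdot) = \sum_y \pi_s(y)\bigl(P^{s,t}(x,\cdot) - P^{s,t}(y,\cdot)\bigr),
\]
and the triangle inequality gives $\|P^{s,t}(x,\cdot) - \pi_t\|_\mathrm{TV} \le \delta(P^{s,t})$; then take the sup in $x$. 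Honestly, no step here is a serious obstacle: the content of the lemma is entirely that the measures $(\pi_t)$ constructed through the limit $s \to -\infty$ really do behave as stationary distributions under one-step pushforward, and that has been done already in Lemma~\ref{matrixMultProp}. The only care required is to distinguish the two ways ($P^{u,s}$ acting on the left, $P^{s,t}$ acting on the right) in which the factorization of $P^{u,t}$ is used for (a) versus (b).
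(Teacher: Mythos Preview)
Your proposal is correct and follows essentially the same approach as the paper: part (a) is identical, and for parts (b) and (c) you spell out directly the convexity/triangle-inequality arguments that the paper instead defers to Lemma~\ref{convexityLemma} and an external reference. If anything, your version is slightly more self-contained.
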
 
The elementary proof of Lemma~\ref{lemma3} is deferred to Appendix~\ref{appendix:existence}.

\section{Evolving sets for time-inhomogeneous Markov chains}\label{evSetsSection}

In this section we adapt the theory of evolving sets (as introduced by \cite{Morris2003EvolvingSM} for time-homogeneous chains) to time-inhomogeneous Markov chains and apply it to estimate the mixing time given by Definition~\ref{mixingDefReal}. More specifically, we prove an upper bound on the mixing time in the spirit of Corollary 2.3 by Jerrum and Sinclair (\cite{JerrumSinclair}). This corollary can be generalized to non-reversible chains using evolving sets, and a full account of the proof can be found below Theorem 17.10 in~\cite{LevinPeresWilmer2006}. While evolving sets allow for sharper bounds on stronger notions of mixing time (see \cite{Morris2003EvolvingSM}) in the time-homogeneous case, for this paper we content ourselves with generalizing the result presented in~\cite{LevinPeresWilmer2006}.

A crucial observation we have made is that those proofs do not require time-homogeneity to a significant extent - they mainly utilize the existence of a unique stationary distribution~$\pi$ of the chain. We can replace stationarity with our Lemma~\ref{matrixMultProp} to achieve similar results. One important example of that is the martingale property of $(\pi(S_t))_{t \ge 0}$ (see Lemma 17.13. in~\cite{LevinPeresWilmer2006}), where $S_t$ is a stochastic process introduced in (\ref{eqEvoSt}) below (with appropriate modifications for time-homogeneity). In Lemma~\ref{MartProp}, we show that from our definitions and Lemma~\ref{matrixMultProp} it follows that $(\pi_t(S_t))_{t \ge 0}$ is also a martingale.

In the following, we will assume that for every $P_s$, 
\begin{equation}\label{eqLaziness}
P_s(x,x) \geq \frac{1}{2}, \qquad \text{ for all } x \in [n],
\end{equation}
i.e.~that the resulting Markov chain $(X_t)$ is lazy. This avoids all problems with periodicity and is an assumption that is also often made in the homogeneous case. We will point out where it is used in our proofs.

Additionally, we make the following irreducibility assumption: For some $t_0 \in \mathbb{Z}$, assume that for every $x \in [n]$ there exists $s < t_0$ such that $\pi_s(x) > 0$.  Together with (\ref{eqLaziness}), this implies that for every $t \ge t_0$ and for every $x \in [n]$, $\pi_t(x) > 0.$

\subsection{Evolving sets and mixing times}\label{evolvingSetsSection}

Let $P = (P_s)_{s \in\mathbb{Z}}$ be a sequence of transition matrices such that $\pi_t$ exists for every $t \in \mathbb{Z}$, and consider the corresponding Markov chain $X = (X_t)_{t\in\mathbb{Z}}$. The evolving set process $(S_t)$ which we define below is a Markov chain on the space of all subsets of $[n]$. Its time until absorption in either $\emptyset$ or $[n]$ is closely linked to the mixing time of $X$.
In preparation of its construction, recall the law of the stationary chain $\mathbb{P}^P$ from (\ref{ppEquation}) and define for $t\in\mathbb{N}, A \subset [n], y\in[n]$,
\begin{equation}\label{eqEvoQ}
Q_{t+1}(A,y) \coloneqq \mathbb{P}^{P}(X_t \in A, X_{t+1} = y)  = \sum_{x \in A} \pi_t(x) P_{t+1}(x,y)
\end{equation}
and furthermore for $B \subset [n]$
\begin{equation}
Q_{t+1}(A,B) \coloneqq \sum_{y \in B} Q_{t+1}(A,y).
\end{equation}
The quantity $Q_{t+1}(A,B)$ is the ``stationary flow'' from $A$ to $B$ between time $t$ and $t+1$.
Observe that, due to Lemma~\ref{matrixMultProp}, the stationary flow satifies 
\begin{equation}\label{eqFutureQt}
Q_{t+1}([n],y) = \pi_{t+1}(y), \qquad \text {for all } y\in [n],
\end{equation}
and, by (\ref{eqEvoQ}),
\begin{equation}\label{eqPresentQt}
Q_{t+1}(A,[n]) = \sum_{z \in [n]} Q_{t+1}(A,z) = \pi_t(A), \qquad A \subset [n].
\end{equation}
Let $(U_t)_{t \in \mathbb{Z}}$ be a family of i.i.d.~ uniform random variables on $[0,1]$ that is independent of $(X_t)_{t \in \mathbb{Z}}$. For a starting time $t_0 \in \mathbb{Z}$ and a non-empty starting state $S = S_{t_0} \subset [n]$, iteratively set
\begin{equation}\label{eqEvoSt}
S_{t+1} \coloneqq \left\{y \in [n] : \frac{Q_{t+1}(S_t,y)}{\pi_{t+1}(y)} \geq U_{t+1} \right\}, \qquad t \ge t_0.
\end{equation}
This defines a time-inhomogeneous Markov chain $(S_t)_{t \ge t_0}$ on the set of subsets of $[n]$. It is easy to see that $\emptyset$ and $[n]$ are the absorbing states of this chain. 
Let $\mathbb{P}^P_{S,t_0}$ be the distribution of $(S_t)_{t \ge t_0}.$
By (\ref{eqEvoSt}) and the Markov property of $(S_t)_{t \ge t_0}$, we have
\begin{equation}\label{eqEvoCond}
\frac{Q_{t+1}(S_t,y)}{\pi_{t+1}(y)} = \mathbb{P}_{S,t_0}^{P}(y \in S_{t+1} | S_t) = \mathbb{P}_{S_t,t}^{P}(y \in S_{t+1}),
\end{equation}
where $\mathbb{P}_{S_t,t}^{P}$ is the distribution of $(S_s)_{s \ge t}$ started in the state $S_t$.
Combining (\ref{eqEvoQ}) and (\ref{eqEvoCond}) yields
\begin{equation}\label{eqEvo1}
\mathbb{P}_{S,t}^{P}(y \in S_{t+1}) \pi_{t+1}(y) = Q_{t+1}(S,y) = \sum_{z \in S} \pi_t(z) P_{t+1}(z,y).
\end{equation}

We now have all the tools to prove that $(\pi_t(S_t))_{t \ge t_0}$ is a martingale. This will be later used to link the growth of $S_t$ to the bottleneck ratio of the underlying $P_t$. Since the choice of starting time $t_0$ is essentially arbitrary, without loss of generality we assume $t_0 = 0$.

\begin{lemma} \label{MartProp}
The sequence $(\pi_t(S_t))_{t \geq 0}$ is a martingale under $\mathbb{P}_{S,0}^P$ with respect to $(\mathcal{F}_t)_{t \ge 0}$, where $\mathcal{F}_t = \sigma(S_s: 0\le s \le t)$. In particular \begin{equation} \label{MarkovMartProp}
\mathbb{E}^P_{S,t}[\pi_{t+1}(S_{t+1})] = \pi_t(S)
\end{equation}
for every $S \subset [n], t \in \mathbb{N}.$
\end{lemma}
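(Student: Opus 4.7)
The plan is to verify the three pieces of the martingale definition in turn: adaptedness, integrability, and the conditional expectation identity, with all the real work concentrated in the last step.

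First, I would note that $\pi_t(S_t)$ is $\mathcal{F}_t$-measurable since $S_t$ itself is, and integrability is immediate because $0 \le \pi_t(S_t) \le 1$. This reduces the lemma to verifying (\ref{MarkovMartProp}), because by the Markov property of $(S_t)$ we have
\[
\mathbb{E}^P_{S,0}\bigl[\pi_{t+1}(S_{t+1}) \mid \mathcal{F}_t\bigr] = \mathbb{E}^P_{S_t, t}\bigl[\pi_{t+1}(S_{t+1})\bigr],
\]
so establishing (\ref{MarkovMartProp}) for an arbitrary subset $S \subset [n]$ gives the full martingale property.

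To prove (\ref{MarkovMartProp}), my plan is a direct computation: write $\pi_{t+1}(S_{t+1}) = \sum_{y \in [n]} \pi_{t+1}(y)\, \mathbbm{1}_{\{y \in S_{t+1}\}}$, interchange the (finite) sum and the expectation, and then apply the two key identities already at our disposal. Namely,
\[
\mathbb{E}^P_{S,t}\bigl[\pi_{t+1}(S_{t+1})\bigr] = \sum_{y \in [n]} \pi_{t+1}(y)\, \mathbb{P}_{S,t}^P(y \in S_{t+1}) = \sum_{y \in [n]} Q_{t+1}(S,y) = Q_{t+1}(S,[n]) = \pi_t(S),
\]
where the second equality uses (\ref{eqEvo1}) and the last uses (\ref{eqPresentQt}).

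There isn't really a hard step here: the content of the lemma is that the evolving-set construction was tailored precisely so that $y$ is included in $S_{t+1}$ with probability $Q_{t+1}(S_t,y)/\pi_{t+1}(y)$, and this cancels the $\pi_{t+1}(y)$ weights when computing the expected size of $\pi_{t+1}(S_{t+1})$. The only substantive input from the time-inhomogeneous setting is that we need $\pi_t P^{t,t+1} = \pi_{t+1}$ (i.e.\ Lemma~\ref{matrixMultProp}) to guarantee $Q_{t+1}(S,[n]) = \pi_t(S)$; this replaces the usage of stationarity $\pi P = \pi$ in the classical time-homogeneous proof. Thus the only thing to be careful about is citing Lemma~\ref{matrixMultProp} at the right moment, which is already absorbed into identity (\ref{eqPresentQt}).
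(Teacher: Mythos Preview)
Your proof is correct and follows essentially the same approach as the paper: reduce to a one-step computation via the Markov property, expand $\pi_{t+1}(S_{t+1})$ as a sum of indicators, and then apply (\ref{eqEvoCond})/(\ref{eqEvo1}) followed by (\ref{eqPresentQt}). Your explicit mention of adaptedness and integrability is a minor addition, and your remark about the role of Lemma~\ref{matrixMultProp} is spot on.
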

\begin{proof}[Proof of Lemma~\ref{MartProp}] The proof is similar to the time-homogeneous version, Lemma 17.13 in~\cite{LevinPeresWilmer2006}. Since $(S_t)_{t \geq 0}$ is a Markov chain, it suffices to condition on $S_t$ in place of $\mathcal{F}_t$:
\begin{align*}
\mathbb{E}^{P}_{S,0}[\pi_{t+1}(S_{t+1}) | S_t] &= \mathbb{E}^{P}_{S,0}\Bigg[\sum_{z \in [n]} \mathbbm{1}_{\{z \in S_{t+1}\}}\pi_{t+1}(z) \Big| S_t\Bigg] \\
&= \sum_{z \in [n]} \mathbb{P}^{P}_{S,0}(z \in S_{t+1} | S_t) \pi_{t + 1}(z) \stackrel{(\ref{eqEvoCond})}{=} \sum_{z \in [n]}Q_{t+1}(S_t,z) \\ 
&\stackrel{\mathclap{(\ref{eqPresentQt})}}{=} \pi_t(S_t),
\end{align*}
hence it is a martingale. (\ref{MarkovMartProp}) follows immediately. 
\end{proof}

To estimate the mixing time, we express the time-dependent connectivity structure of the underlying state space in terms of the growth of the evolving set. Before we can state the main result of this section, we introduce auxiliary notation. Let
\begin{equation}\label{evoEqgt}
g_t \coloneqq \min_{z\in [n]} \frac{\pi_{t-1}(z)}{\pi_t(z)} \in (0, 1]
\end{equation}
bound the rate of change between $\pi_{t-1}$ and $\pi_t$. In time-homogeneous settings, or when $\pi_t = \pi_{t-1}$ for other reasons, $g_t = 1$. Furthermore denote
\begin{equation}
\pi^\mathrm{min}_t \coloneqq \min_z \pi_t(z),
\end{equation}
which will be frequently used to give quantitative lower bounds on $\pi_t(S)$ for $S \subset [n]$ based on the number of elements in $S$. We further define
\begin{equation}\label{evoEqPsi}
\psi_t(S) \coloneqq 1 - \mathbb{E}^P_{S,t-1}\bigg[\sqrt{\tfrac{\pi_t(S_t)}{\pi_{t-1}(S)}}\bigg]
\end{equation}
and
\begin{equation}\label{evoEqPhi}
\Phi_t(S) \coloneqq \frac{1}{2\pi_{t-1}(S)}(Q_t(S,S^c)+Q_t(S^c,S)).
\end{equation}
Note that $\Phi_t(S)$ normalizes the stationary flow between $S$ and its complement $S^c$ by the size of the set $S$ under $\pi_t$. Finally, we define the \emph{time-dependent bottleneck ratio}
\begin{equation}\label{bottleneckRatio}
\Phi_t^* \coloneqq \inf \{\Phi_t(S) : S \subset [n] \text{ with } \pi_{t-1}(S) \leq 1/2 \}.
\end{equation}
We now have the necessary prerequisites to state the main theorem of this section. 
\begin{theorem} \label{upperBoundMixingTheorem}
Fix $\varepsilon \in (0,1).$
If for some $t > 0$
\begin{equation}\label{upperBoundMixingTheoremEquation}
\frac{1}{\sqrt{\pi^\mathrm{min}_t \pi^\mathrm{min}_0}} \prod_{s = 1}^{t} \left[1 - \frac{1}{2}\left(\frac{1}{2} \frac{g_s}{1-\frac{1}{2}g_s}\Phi^*_s\right)^2\right] \leq 2\varepsilon
\end{equation}
then
$$d(0,t) \leq \varepsilon.$$
\end{theorem}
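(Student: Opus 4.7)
The plan is to generalize the classical evolving-set proof of Theorem~17.10 in~\cite{LevinPeresWilmer2006} to the time-inhomogeneous setting. The argument has three ingredients. First, I would prove by induction on $t$ the representation identity
\[
\pi_0(x)\, P^{0,t}(x,y) \;=\; \pi_t(y)\, \mathbb{P}^{P}_{\{x\},0}(y \in S_t), \qquad x, y \in [n],
\]
where $(S_t)$ is the evolving-set process started from $S_0 = \{x\}$. The case $t=0$ is immediate, and the inductive step follows by applying (\ref{eqEvo1}) to $S = S_t$ and using the Markov property of $(S_t)$. This is the time-inhomogeneous analog of Lemma~17.15 in~\cite{LevinPeresWilmer2006}.

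Using this representation together with the elementary identity $\sum_y \pi_t(y)|\mathbbm{1}_{y \in S} - \pi_t(S)| = 2\pi_t(S)(1-\pi_t(S))$ and exchanging the expectation with the absolute value via Jensen, one obtains
\[
d(0,t) \;\le\; \frac{\mathbb{E}^{P}_{\{x\},0}[\pi_t(S_t)(1-\pi_t(S_t))]}{\pi_0(x)}.
\]
Two Cauchy--Schwarz-type estimates, which introduce the factors $\pi_0^\mathrm{min}$ and $\pi_t^\mathrm{min}$ as the normalizations at the initial and terminal times, then refine this to
\[
d(0,t) \;\le\; \frac{1}{2\sqrt{\pi_0^\mathrm{min}\,\pi_t^\mathrm{min}}}\,\mathbb{E}^{P}_{\{x\},0}\bigl[\sqrt{\pi_t(S_t)}\bigr].
\]

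The last ingredient is a recursive bound on $\mathbb{E}^{P}_{\{x\},0}[\sqrt{\pi_t(S_t)}]$. By the tower property and the definition (\ref{evoEqPsi}) of $\psi_t$,
\[
\mathbb{E}^{P}_{\{x\},0}\!\bigl[\sqrt{\pi_t(S_t)} \mid S_{t-1}\bigr] \;=\; \sqrt{\pi_{t-1}(S_{t-1})}\,(1 - \psi_t(S_{t-1})).
\]
The key step is a time-inhomogeneous Cheeger-type inequality: for every $S \subset [n]$ with $\pi_{t-1}(S) \le 1/2$,
\[
\psi_t(S) \;\ge\; \tfrac{1}{2}\Bigl(\tfrac{1}{2}\tfrac{g_t}{1-g_t/2}\,\Phi_t^*\Bigr)^2.
\]
For $S$ with $\pi_{t-1}(S) > 1/2$ the same bound is applied to $S^c$, using both the symmetry of the numerator of $\Phi_t$ under $S \leftrightarrow S^c$ visible from (\ref{evoEqPhi}) and the complement-symmetry of the evolving-set process inherent in (\ref{eqEvoSt}), namely that $(S_t^c)$ is itself an evolving-set process driven by $1 - U_t$. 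Iterating the recursion yields
\[
\mathbb{E}^{P}_{\{x\},0}\bigl[\sqrt{\pi_t(S_t)}\bigr] \;\le\; \sqrt{\pi_0(x)}\,\prod_{s=1}^{t}\Bigl[1 - \tfrac{1}{2}\Bigl(\tfrac{1}{2}\tfrac{g_s}{1-g_s/2}\,\Phi_s^*\Bigr)^2\Bigr],
\]
and substituting into the displayed bound of the previous paragraph (using $\sqrt{\pi_0(x)} \le 1$) produces the theorem.

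The main technical obstacle is the time-inhomogeneous Cheeger inequality for $\psi_t$. The threshold $Q_{t+1}(S,y)/\pi_{t+1}(y)$ in (\ref{eqEvoSt}) is normalized by $\pi_{t+1}(y)$ rather than by $\pi_t(y)$, so the ``deterministic retention'' of $S$ by the evolving-set dynamics is diluted by a factor controlled by $g_t$. This propagates through the Cauchy--Schwarz estimate that converts $\Phi_t$ into $\psi_t$ and produces the correction factor $g_t/(1-g_t/2)$. The laziness hypothesis (\ref{eqLaziness}) is essential here: for $y \in S$ it guarantees $Q_{t+1}(S,y)/\pi_{t+1}(y) \ge g_{t+1}/2$, which in turn allows the threshold decomposition of the evolving-set update into ``always retained'' and ``conditionally modified'' elements, in analogy with the time-homogeneous argument.
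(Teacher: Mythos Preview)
Your overall strategy is right and close to the paper's, but the iteration step contains a real gap. The displayed bound
\[
\mathbb{E}^{P}_{\{x\},0}\bigl[\sqrt{\pi_t(S_t)}\bigr] \;\le\; \sqrt{\pi_0(x)}\,\prod_{s=1}^{t}\Bigl[1 - \tfrac{1}{2}\bigl(\tfrac{1}{2}\tfrac{g_s}{1-g_s/2}\,\Phi_s^*\bigr)^2\Bigr]
\]
is false as written. The Cheeger-type inequality $\psi_t(S) \ge \tfrac{1}{2}\bigl(\tfrac{1}{2}\tfrac{g_t}{1-g_t/2}\Phi_t^*\bigr)^2$ is only available when $\pi_{t-1}(S) \le 1/2$, since $\Phi_t^*$ is by definition the infimum over such $S$. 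Once $\pi_{t-1}(S_{t-1}) > 1/2$ --- for instance at the absorbing state $S_{t-1}=[n]$, reached with positive probability --- you lose the contraction entirely (indeed $\psi_t([n])=0$). Invoking complement symmetry does not save the iteration on $\sqrt{\pi_t(S_t)}$: applying the bound to $S_{t-1}^c$ controls $\mathbb{E}_{S_{t-1}^c,t-1}\bigl[\sqrt{\pi_t(S_t)}\bigr] = \mathbb{E}_{S_{t-1},t-1}\bigl[\sqrt{\pi_t(S_t^c)}\bigr]$, which is not the quantity you are iterating.

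The paper's fix is to introduce $S_t^\# \in \{S_t, S_t^c\}$, whichever has $\pi_t$-mass at most $1/2$, and to iterate on $\mathbb{E}\bigl[\sqrt{\pi_t(S_t^\#)}\bigr]$ instead. Since $\pi_{t-1}(S_{t-1}^\#) \le 1/2$ always, the $\Phi_t^*$ bound applies to $S_{t-1}^\#$, and complement symmetry then gives the one-step contraction for $\pi_t(S_t^\#)/\pi_{t-1}(S_{t-1}^\#)$ in both cases. This produces $\mathbb{E}\bigl[\sqrt{\pi_t(S_t^\#)}\bigr] \le \sqrt{\pi_0(x)}\prod_s[\cdots]$, which is what the argument needs. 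A related issue is your intermediate step $d(0,t) \le \tfrac{1}{2\sqrt{\pi_0^{\min}\pi_t^{\min}}}\,\mathbb{E}\bigl[\sqrt{\pi_t(S_t)}\bigr]$: the ``two Cauchy--Schwarz-type estimates'' are not spelled out, and I do not see how to get this with $S_t$ rather than $S_t^\#$. Once you switch to $S_t^\#$, your route through $\pi_t(S_t)(1-\pi_t(S_t)) \le \pi_t(S_t^\#)$ does close; the paper instead bounds $d(0,t) \le \tfrac{1}{2\pi_0(x)}\mathbb{P}(S_t^\# \ne \emptyset)$ via a stopping-time argument and then applies Markov's inequality $\mathbb{P}(S_t^\# \ne \emptyset) \le \mathbb{E}\bigl[\sqrt{\pi_t(S_t^\#)}\bigr]/\sqrt{\pi_t^{\min}}$.
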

\begin{remark}
Theorem~\ref{upperBoundMixingTheorem} only yields that $t_\mathrm{mix} (\varepsilon, 0) \le t$ for any $t$ that satisfies (\ref{upperBoundMixingTheoremEquation}), while giving no estimate on the $t$. We refine the statement in Corollary~\ref{MainThCor} and Corollary~\ref{probabilisticMainThCor} to strengthen the link to mixing time.
\end{remark}

\subsection{Proof of Theorem~\ref{upperBoundMixingTheorem}}

The proof of Theorem~\ref{upperBoundMixingTheorem} will be based on the observation that we can relate $\psi_t(S)$ to $d(0,t)$. However, first we show that $\Phi_t(S)$ is closely related to $\psi_t(S)$. We do this in two lemmas below. (In the following we use the standard notation $a \wedge b \coloneqq \min(a,b)$.)
\begin{lemma}\label{lem33}
Let $\varphi_t(S) \coloneqq \frac{1}{2 \pi_{t-1}(S)} \sum_{y \in [n]} (Q_t(S,y) \wedge Q_t(S^c,y))$. Then for every $S \subset [n]$ 
\begin{equation}\label{lem33eq}
1 - \psi_t(S) \leq \frac{\sqrt{1+2\varphi_t(S)} + \sqrt{1-2\varphi_t(S)}}{2} \leq 1 - \frac{\varphi_t(S)^2}{2}.
\end{equation}
\end{lemma}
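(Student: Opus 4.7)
The plan is to use the integral representation
$$1-\psi_t(S) = \int_0^1 \sqrt{\pi_t(A_U)/\pi_{t-1}(S)}\,dU,$$
where $A_U \coloneqq \{y \in [n] : Q_t(S,y) \geq U\pi_t(y)\}$ is the (deterministic) set satisfying $S_t = A_{U_t}$ under $\mathbb{P}^P_{S,t-1}$. I then estimate $\pi_t(A_U)$ separately on $U \leq 1/2$ and $U > 1/2$ and combine via Jensen's inequality.

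For the decomposition, partition $[n] = T_1 \sqcup T_2$ according to whether $Q_t(S,y) \geq Q_t(S^c,y)$. Since $Q_t(S,y) + Q_t(S^c,y) = \pi_t(y)$ by (\ref{eqFutureQt}), this is equivalent to $Q_t(S,y)/\pi_t(y) \geq 1/2$, so $T_1 \subseteq A_U$ whenever $U \leq 1/2$ and $A_U \subseteq T_1$ whenever $U > 1/2$. Writing $m_y \coloneqq Q_t(S,y) \wedge Q_t(S^c,y)$ and $M_y \coloneqq Q_t(S,y) \vee Q_t(S^c,y)$, a direct computation of the indicators and their $U$-integrals yields
\begin{align*}
\int_0^{1/2}\pi_t(A_U)\,dU &= \tfrac{1}{2}\pi_t(T_1) + \sum_{y\in T_2}m_y,\\
\int_{1/2}^1\pi_t(A_U)\,dU &= \tfrac{1}{2}\pi_t(T_1) - \sum_{y\in T_1}m_y.
\end{align*}
The key algebraic identity is $\pi_t(T_1) - \pi_{t-1}(S) = \sum_{y\in T_1}m_y - \sum_{y\in T_2}m_y$, which follows because $Q_t(S,y) = M_y$ on $T_1$ and $Q_t(S,y) = m_y$ on $T_2$, so that $\pi_{t-1}(S) = \sum_{y\in T_1}M_y + \sum_{y\in T_2}m_y$ while $\pi_t(T_1) = \sum_{y\in T_1}(m_y+M_y)$. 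Combined with $\sum_y m_y = 2\pi_{t-1}(S)\varphi_t(S)$, this shows that both half-integrals equal $\tfrac{1}{2}\pi_{t-1}(S)(1\pm 2\varphi_t(S))$. Applying Jensen's inequality (concavity of $\sqrt{\cdot}$) to each half gives $\int_0^{1/2}\sqrt{\pi_t(A_U)/\pi_{t-1}(S)}\,dU \leq \tfrac{1}{2}\sqrt{1+2\varphi_t(S)}$ and symmetrically for the other half, whose sum is the first inequality of (\ref{lem33eq}).

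For the second inequality I would apply $\sqrt{1-u} \leq 1-u/2$ twice: first $(\sqrt{1+2\varphi}+\sqrt{1-2\varphi})^2 = 2+2\sqrt{1-4\varphi^2} \leq 4-4\varphi^2$, and then $\sqrt{4-4\varphi^2} \leq 2-\varphi^2$, so dividing by $2$ produces the desired bound $1 - \varphi_t(S)^2/2$. The main obstacle will be the bookkeeping of the indicator decomposition on $[0,1/2]$ versus $(1/2,1]$ and spotting the telescoping identity that produces the clean factor $1 \pm 2\varphi_t(S)$; the rest (Jensen and the elementary manipulation of $\sqrt{\cdot}$) is routine.
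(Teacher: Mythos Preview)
Your proof is correct and follows essentially the same strategy as the paper: split at $U=1/2$, show that the (conditional) expectation of $\pi_t(S_t)/\pi_{t-1}(S)$ on each half equals $1\pm 2\varphi_t(S)$, and apply Jensen. The only notable difference is that the paper computes the half $U>1/2$ indirectly via the martingale identity $\mathbb{E}^P_{S,t-1}[R_t]=1$ (Lemma~\ref{MartProp}), whereas you compute both halves by hand from the level-set decomposition $T_1\sqcup T_2$; your route is thus slightly more self-contained but otherwise equivalent. One small omission: you should record $\varphi_t(S)\in[0,1/2]$ (immediate from $Q_t(S,y)\wedge Q_t(S^c,y)\le Q_t(S,y)$ and $\sum_y Q_t(S,y)=\pi_{t-1}(S)$) so that $\sqrt{1-2\varphi_t(S)}$ and $\sqrt{1-4\varphi_t(S)^2}$ are real in your second-inequality argument.
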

\begin{proof}
First, note that $\frac{\sqrt{1+2x}+\sqrt{1-2x}}{2} \leq 1-\frac{x^2}{2}$ holds for any real number $x \in [-1/2,1/2].$ Hence, to show the second inequality in (\ref{lem33eq}), it is enough to verify that $\varphi_t(S) \in [-1/2,1/2]$. It is clear that $\varphi_t(S) \geq 0.$ On the other hand
$$\sum_{y \in [n]} (Q_t(S,y)\wedge Q_t(S^c,y)) \leq \sum_{y \in [n]} Q_t(S,y) \stackrel{(\ref{eqEvoQ})}{=} \pi_{t-1}(S)$$
which implies that $\varphi_t(S) \leq 1/2.$ Hence $\varphi_t(S) \in [-1/2, 1/2]$. 

The first inequality in (\ref{lem33eq}) is harder to prove. Recall that $U_t$ denotes the uniform random variable used to generate $S_t$ from $S_{t-1}$. We split the proof by conditioning on $U_t \le 1/2$ first, and $U_t > 1/2$ later. Note that conditioned on $U_t \in [0, 1/2]$, $U_t$ is uniform on~$[0,1/2]$. By (\ref{eqEvoSt}), it is immediate that
\begin{equation}\label{LemproofEq1}
\mathbb{P}^{P}_{S,t-1}(y \in S_t | U_t \le 1/2) = 1 \wedge 2 \frac{Q_t(S,y)}{\pi_t(y)}.
\end{equation}
After multiplying both sides of (\ref{LemproofEq1}) by $\pi_t(y)$, this implies
\begin{align*}
\pi_t(y)\mathbb{P}^{P}_{S,t-1}(y \in S_t | U_t \le 1/2) &= \pi_t(y) \wedge 2Q_t(S,y) \stackrel{(\ref{eqFutureQt})}{=} (Q_t(S,y) + Q_t(S^c,y)) \wedge 2Q_t(S,y) \\
&= Q_t(S,y) + (Q_t(S^c,y) \wedge Q_t(S,y)).
\end{align*}
Summing over all $y \in [n]$ yields
\begin{align*}
\mathbb{E}^{P}_{S,t-1}[\pi_t(S_t) | U_t \le 1/2] %&= \sum_{y \in [n]} \pi_t(y) \mathbb{P}^{t-1}_S(y \in S_t | U_t < 1/2) \\
&= \sum_{y \in [n]} Q_t(S,y) + \sum_{y \in [n]} (Q_t(S^c,y) \wedge Q_t(S,y)) \\
&\stackrel{\mathclap{(\ref{eqPresentQt})}}{=} \pi_{t-1}(S) + 2\pi_{t-1}(S)\varphi_t(S).
\end{align*}
Dividing both sides by $\pi_{t-1}(S)$ and defining $R_t \coloneqq \frac{\pi_t(S_t)}{\pi_{t-1}(S_{t-1})}$ results in 
$$\mathbb{E}^{P}_{S,t-1}[R_t | U_t \le 1/2] = 1 + 2 \varphi_t(S).$$
However, by the martingale property (\ref{MarkovMartProp}),
$$\mathbb{E}^{P}_{S,t-1}[R_t] = \mathbb{E}^P_{S,t-1}[\pi_t(S_t) / \pi_{t-1}(S_{t-1})] = \frac{\pi_{t-1}(S)}{\pi_{t-1}(S)} = 1.$$
Since $\mathbb{P}^{P}_{S,t-1}(U_t \le 1/2) = 1/2$, this implies that
$$\mathbb{E}^{P}_{S,t-1}[R_t | U_t > 1/2] = 1 - 2\varphi_t(S).$$
Note that from (\ref{evoEqPsi}), $1 - \psi_t(S) = \mathbb{E}^P_{S,t-1}[\sqrt{R_t}]$, so by Jensen's inequality we can conclude
\begin{align*}
1 - \psi_t(S)
&= \frac{1}{2}\left(\mathbb{E}^{P}_{S,t-1}[\sqrt{R_t} | U_t \le 1/2] + \mathbb{E}^{P}_{S,t-1}[\sqrt{R_t} | U_t > 1/2]\right) \\
&\leq \frac{1}{2}\left(\sqrt{\mathbb{E}^{P}_{S,t-1}[R_t | U_t \le 1/2]} + \sqrt{\mathbb{E}^{P}_{S,t-1}[R_t | U_t > 1/2]}\right) \\
&= \frac{1}{2}\left(\sqrt{1 + 2\varphi_t(S)} + \sqrt{1 - 2\varphi_t(S)}\right).
\end{align*}
This shows the first inequality in (\ref{lem33eq}) and completes the proof.
\end{proof}

We can now prove a relation between $\psi_t$ and $\Phi_t.$ This result is inspired by Lemma 3 in~\cite{Morris2003EvolvingSM}, with an additional term appearing due to time-inhomogeneity.
\begin{lemma} \label{psiestimate}
For every $t > 0$ and for every set $S \subset [n]$
\begin{equation}\label{evoIneqLemmaPsiPhi}
\psi_t(S) \geq \frac{1}{8} \frac{g_t^2}{(1-\frac{1}{2}g_t)^2}(\Phi_t(S))^2.
\end{equation}
\end{lemma}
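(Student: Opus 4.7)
The strategy is to deduce the lemma from Lemma~\ref{lem33}. The second inequality in (\ref{lem33eq}) already yields $\psi_t(S) \ge \tfrac{1}{2}\varphi_t(S)^2$, so it suffices to produce the lower bound
\[
\varphi_t(S) \;\ge\; \frac{1}{2}\cdot\frac{g_t}{1-\tfrac{1}{2}g_t}\,\Phi_t(S).
\]
Comparing the definitions of $\varphi_t(S)$ and $\Phi_t(S)$, this amounts to showing that, for every $y$, the minimum $Q_t(S,y)\wedge Q_t(S^c,y)$ is a definite fraction of the ``cross'' contribution $Q_t(S,y)\mathbbm{1}_{\{y\in S^c\}}+Q_t(S^c,y)\mathbbm{1}_{\{y\in S\}}$ that appears in $Q_t(S,S^c)+Q_t(S^c,S)$.

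The key input is laziness combined with the change-of-measure bound $\pi_{t-1}(y)\ge g_t\pi_t(y)$. From $P_t(y,y)\ge 1/2$ one has $Q_t(S,y)\ge \tfrac{1}{2}\pi_{t-1}(y)$ whenever $y\in S$, and $Q_t(S^c,y)\ge \tfrac{1}{2}\pi_{t-1}(y)$ whenever $y\in S^c$; feeding in $\pi_{t-1}(y)\ge g_t\pi_t(y)$ and using $\pi_t(y)=Q_t(S,y)+Q_t(S^c,y)$ (which follows from $\pi_{t-1}P_t=\pi_t$, Lemma~\ref{matrixMultProp}), I get
\[
Q_t(S^c,y)\;\ge\;\frac{g_t/2}{1-g_t/2}\,Q_t(S,y) \qquad \text{for } y\in S^c,
\]
and analogously $Q_t(S,y)\ge \tfrac{g_t/2}{1-g_t/2}Q_t(S^c,y)$ for $y\in S$. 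Since $\tfrac{g_t/2}{1-g_t/2}\le 1$, the first inequality implies $Q_t(S,y)\wedge Q_t(S^c,y)\ge \tfrac{g_t/2}{1-g_t/2}Q_t(S,y)$ for $y\in S^c$ (both when the minimum is $Q_t(S,y)$ and when it is $Q_t(S^c,y)$), and similarly on $S$.

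Summing over $y\in[n]$ therefore gives
\[
\sum_{y\in[n]}\bigl(Q_t(S,y)\wedge Q_t(S^c,y)\bigr)\;\ge\;\frac{g_t/2}{1-g_t/2}\bigl(Q_t(S,S^c)+Q_t(S^c,S)\bigr).
\]
Dividing by $2\pi_{t-1}(S)$ turns this into $\varphi_t(S)\ge \tfrac{1}{2}\tfrac{g_t}{1-g_t/2}\Phi_t(S)$, and plugging this into $\psi_t(S)\ge\tfrac{1}{2}\varphi_t(S)^2$ produces exactly (\ref{evoIneqLemmaPsiPhi}).

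The only real obstacle is the step where one would, in the time-homogeneous case, use $\pi P=\pi$ together with laziness to show that the minimum equals $Q_t(S,y)$ for $y\in S^c$ (and $Q_t(S^c,y)$ for $y\in S$), giving $\varphi_t(S)=\Phi_t(S)$ directly. Here $\pi_{t-1}P_t=\pi_t\neq\pi_{t-1}$, so that identification fails and one has to quantify how much the laziness bound on $Q_t(S,y)$ (expressed in terms of $\pi_{t-1}$) lags behind $\pi_t(y)$; this is precisely what the factor $g_t$ measures, and it is the source of the extra $g_t^2/(1-g_t/2)^2$ in the statement. Everything else is algebraic manipulation.
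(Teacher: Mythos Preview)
Your proposal is correct and follows essentially the same approach as the paper: both reduce the claim to the inequality $\varphi_t(S)\ge \tfrac{1}{2}\tfrac{g_t}{1-g_t/2}\Phi_t(S)$ via Lemma~\ref{lem33}, and both prove that inequality by combining laziness $Q_t(A,y)\ge\tfrac{1}{2}\pi_{t-1}(y)$ for $y\in A$ with the identity $\pi_t(y)=Q_t(S,y)+Q_t(S^c,y)$ and the bound $\pi_{t-1}(y)\ge g_t\pi_t(y)$, then summing the resulting pointwise estimate on $Q_t(S,y)\wedge Q_t(S^c,y)$ over $y$. Your derivation of the pointwise bound is in fact slightly more streamlined than the paper's (you go directly to $Q_t(S^c,y)\ge\tfrac{g_t/2}{1-g_t/2}Q_t(S,y)$ rather than passing through the intermediate $\tfrac{1}{2}g_t\pi_t(y)$), but the content is identical.
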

\begin{proof}
We show that 
\begin{equation}\label{proofEqStar}
\varphi_t(S) \geq \frac{1}{2} \frac{g_t}{1 - \frac{1}{2}g_t} \Phi_t(S).
\end{equation}
From (\ref{proofEqStar}) and Lemma~\ref{lem33}, (\ref{evoIneqLemmaPsiPhi}) follows.
To prove (\ref{proofEqStar}), consider $y \in S$. Clearly $Q_t(S,y) \geq Q_t(y,y) \geq \frac{1}{2}\pi_{t-1}(y)$, by the laziness (\ref{eqLaziness}), and thus
\begin{equation} \label{LemproofEq2}
\frac{Q_t(S,y)}{\pi_t(y)} \geq \frac{1}{2} \frac{\pi_{t-1}(y)}{\pi_{t}(y)}\geq \frac{1}{2} \min_z \frac{\pi_{t-1}(z)}{\pi_t(z)} \stackrel{(\ref{evoEqgt})}{=} \frac{1}{2}g_t.
\end{equation}
Noting that
\begin{equation*}
Q_t(S^c,y) \stackrel{\mathclap{(\ref{eqFutureQt})}}{=} \pi_t(y) - Q_t(S,y) 
\stackrel{(\ref{LemproofEq2})}\leq \Big(1 - \frac{1}{2}g_t\Big) \pi_t(y)
\end{equation*}
we deduce
\begin{equation} \label{LemproofEq3}
\frac{1}{2}\frac{g_t}{1 - \frac{1}{2}g_t} Q_t(S^c,y) \leq \frac{1}{2}g_t \pi_t(y).
\end{equation}
So, combining (\ref{LemproofEq2}) and (\ref{LemproofEq3}) yields that for $y \in S$
\begin{align*}
Q_t(S,y) \wedge Q_t(S^c,y) &\geq \bigg(\frac{1}{2}g_t \pi_t(y)\bigg) \wedge Q_t(S^c,y) \\
&\geq \bigg(\frac{1}{2}\frac{g_t}{1 - \frac{1}{2}g_t}Q_t(S^c,y)\bigg) \wedge Q_t(S^c,y) \\
&= \frac{1}{2}\frac{g_t}{1 - \frac{1}{2}g_t}Q_t(S^c,y),
\end{align*}
where the last equality holds since $g_t \leq 1$.
On the other hand, if $y \in S^c$, swapping all instances of $S$ and $S^c$, the argument above yields
$$Q_t(S,y) \wedge Q_t(S^c,y) \geq \frac{1}{2}\frac{g_t}{1 - \frac{1}{2}g_t}Q_t(S,y).$$
Summing over all $y \in [n]$, we can separate the case $y \in S$ from $y \in S^c$ to get
\begin{align*}
\sum_{y \in [n]} Q_t(S,y)\wedge Q_t(S^c,y) &= \sum_{y \in S}Q_t(S,y)\wedge Q_t(S^c,y) + \sum_{y \in S^c} Q_t(S,y) \wedge Q_t(S^c,y) \\
&\geq \frac{1}{2} \frac{g_t}{1 - \frac{1}{2}g_t} \sum_{y \in S} Q_t(S^c,y) + \frac{1}{2} \frac{g_t}{1 - \frac{1}{2}g_t} \sum_{y \in S^c} Q_t(S,y) \\
&= \frac{1}{2} \frac{g_t}{1 - \frac{1}{2}g_t}(Q_t(S^{c},S) + Q_t(S,S^c)).
\end{align*}
Multiplying both sides by $\frac{1}{2 \pi_{t-1}(S)}$ results in (\ref{proofEqStar}) which completes the proof.
\end{proof}

In order to prove Theorem~\ref{upperBoundMixingTheorem}, we need the following proposition (which is a statement similar to Lemma 17.12. in~\cite{LevinPeresWilmer2006}) that relates $P^{0,t}$ and $S_t$:
\begin{proposition} \label{Prop2}
For every $t \geq 0$, for every $x,y \in [n]$,
\begin{equation}\label{prop2eq1}
P^{0,t}(x,y) = \frac{\pi_t(y)}{\pi_0(x)} \mathbb{P}_{\{x\},0}^{P}(y \in S_t).
\end{equation}
\end{proposition}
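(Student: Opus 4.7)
I propose to prove Proposition~\ref{Prop2} by induction on $t \geq 0$, following the template of the time-homogeneous argument (Lemma 17.12 in \cite{LevinPeresWilmer2006}) but feeding in the time-dependent identity (\ref{eqEvoCond}) at each step.

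The base case $t = 0$ is immediate: the matrix $P^{0,0}$ is the identity, so the left-hand side is $\mathbbm{1}_{\{x = y\}}$, while on the right-hand side $S_0 = \{x\}$ deterministically under $\mathbb{P}^P_{\{x\},0}$, making $\mathbb{P}^P_{\{x\},0}(y \in S_0) = \mathbbm{1}_{\{y = x\}}$, and the prefactor $\pi_0(y)/\pi_0(x)$ equals $1$ on $\{y = x\}$. (Here the irreducibility hypothesis ensures $\pi_0(x) > 0$, so the ratio is well-defined.)

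For the inductive step, assume (\ref{prop2eq1}) holds at time $t$ and compute
\[
P^{0,t+1}(x,y) \;=\; \sum_{z \in [n]} P^{0,t}(x,z)\, P_{t+1}(z,y) \;=\; \sum_{z \in [n]} \frac{\pi_t(z)}{\pi_0(x)}\, \mathbb{P}^P_{\{x\},0}(z \in S_t)\, P_{t+1}(z,y),
\]
using the induction hypothesis in the last equality. The task is to recognize this sum as $\pi_{t+1}(y)/\pi_0(x)$ times $\mathbb{P}^P_{\{x\},0}(y \in S_{t+1})$. To that end, I would apply the tower property to the right-hand side of the target identity, conditioning on $S_t$:
\[
\mathbb{P}^P_{\{x\},0}(y \in S_{t+1}) \;=\; \mathbb{E}^P_{\{x\},0}\!\left[\mathbb{P}^P_{S_t,t}(y \in S_{t+1})\right]
\;\stackrel{(\ref{eqEvoCond})}{=}\; \mathbb{E}^P_{\{x\},0}\!\left[\frac{Q_{t+1}(S_t,y)}{\pi_{t+1}(y)}\right].
\]
Expanding $Q_{t+1}(S_t,y) = \sum_{z \in S_t} \pi_t(z) P_{t+1}(z,y)$ via (\ref{eqEvoQ}) and swapping sum and expectation turns this into $\pi_{t+1}(y)^{-1}\sum_{z} \mathbb{P}^P_{\{x\},0}(z \in S_t)\, \pi_t(z)\, P_{t+1}(z,y)$, which after multiplying by $\pi_{t+1}(y)/\pi_0(x)$ is exactly the sum displayed above.

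Two small points to verify along the way: the quotient $Q_{t+1}(S_t,y)/\pi_{t+1}(y)$ in (\ref{eqEvoCond}) is meaningful because the lazy irreducibility assumption stated before Section~\ref{evolvingSetsSection} guarantees $\pi_t(z) > 0$ for all $z \in [n]$ and all $t \geq t_0$ (in particular at $t$ and $t+1$, after translating so that $t_0 \leq 0$); and the swap of sum and expectation is trivial because the state space is finite. I do not foresee a genuine obstacle here — the content of the proposition is really just bookkeeping of the identities (\ref{eqEvoQ}) and (\ref{eqEvoCond}) against the Chapman--Kolmogorov expansion of $P^{0,t+1}$, and the only subtlety is making sure that the time-dependent distributions $\pi_t$ and $\pi_{t+1}$ appear on the correct side of the ratio when unfolding (\ref{eqEvoCond}), which is where the time-inhomogeneous proof differs cosmetically from the homogeneous one.
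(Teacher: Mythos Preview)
Your proposal is correct and follows essentially the same induction argument as the paper's own proof, invoking the Chapman--Kolmogorov expansion together with (\ref{eqEvoQ}) and (\ref{eqEvoCond}) exactly as the paper does. The only cosmetic difference is that you phrase the inductive step as $t \to t+1$ and verify the match by expanding $\mathbb{P}^P_{\{x\},0}(y \in S_{t+1})$ via the tower property, whereas the paper writes $t-1 \to t$ and runs the same chain of equalities starting from the sum; the content is identical.
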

\begin{proof}
We proceed by induction. For $t = 0$ there is nothing to prove. Assume (\ref{prop2eq1}) holds up to $t - 1$ for every $x,y$. Then for arbitrary $x,y \in [n]$
\begin{align*}
P^{0,t}(x,y) &= \sum_{z \in [n]} P^{0,t-1}(x,z)P_t(z,y) = \sum_{z \in [n]} \mathbb{P}^{P}_{\{x\},0}(z \in S_{t-1}) \frac{\pi_{t-1}(z)}{\pi_0(x)}P_t(z,y) \\
&= \frac{\pi_t(y)}{\pi_0(x)} \sum_{z \in [n]} \mathbb{P}_{\{x\},0}^{P}(z \in S_{t-1}) \frac{\pi_{t-1}(z)}{\pi_t(y)}P_t(z,y)
\end{align*}
by using the induction assumption. $\mathbb{E}_{\{x\},0}[\mathbbm{1}_{z \in S_{t-1}}] = \mathbb{P}_{\{x\},0}^{P}(z \in S_{t-1})$ yields that
\begin{align*}
P^{0,t}(x,y) &= \frac{\pi_t(y)}{\pi_0(x)} \mathbb{E}_{\{x\},0}^{P}\bigg[\sum_{z \in S_{t-1}} \pi_{t-1}(z)P_t(z,y) \pi_t(y)^{-1}\bigg] \\
&\stackrel{\mathclap{(\ref{eqEvoQ})}}{=} \frac{\pi_t(y)}{\pi_0(x)} \mathbb{E}_{\{x\},0}^{P}\bigg[\sum_{z \in S_{t-1}} Q_t(z,y)\pi_t(y)^{-1}\bigg] \\
&= \frac{\pi_t(y)}{\pi_0(x)}\mathbb{E}_{\{x\},0}^{P} \left[\pi_t(y)^{-1}Q_t(S_{t-1},y)\right] \stackrel{(\ref{eqEvoCond})}{=} \frac{\pi_t(y)}{\pi_0(x)} \mathbb{E}_{\{x\},0}^{P}\left[\mathbb{P}^{P}_{\{x\},0}(y \in S_t |S_{t-1}) \right] \\
&= \frac{\pi_t(y)}{\pi_0(x)}\mathbb{P}_{\{x\},0}^{P}(y \in S_t).
\end{align*}
This shows the induction step and completes the proof.
\end{proof}
We can now prove the main result of the section.
\begin{proof}[Proof of Theorem~\ref{upperBoundMixingTheorem}]
We follow the strategy of the proof of Theorem 17.10 as outlined in~\cite{LevinPeresWilmer2006}.
For every $t$, we define
\begin{equation}\label{StHash}
S_t^\# \coloneqq
\begin{cases}
S_t, &\mbox{if } \pi_t(S_t) \leq 1/2, \\
S_t^c, &\mbox{if } \pi_t(S_t) > 1/2.
\end{cases}
\end{equation}
It is useful to note in preparation that $(S_t^c)_{t \ge 0}$ is a stochastic process that has the same transition probabilities as $(S_t)_{t \ge 0}$, since
\begin{align*}
S_t^c &= \{y \in [n] : \tfrac{Q_t(S,y)}{\pi_t(y)} < U_t\} \\
&= \{y \in [n] : \tfrac{Q_t(S^c,y)}{\pi_t(y)} \ge 1 - U_t\} \\
&= \{y \in [n] : \tfrac{Q_t(S^c,y)}{\pi_t(y)} \ge \tilde{U_t}\}, 
\end{align*}
where $\tilde{U_t} = 1 - U_t$ is again uniform. This also shows that $S_t^c$, given that $S_{t-1} = S$, behaves like an evolving set started in $S^c$ at time $t-1$, that is to say for $S,T \subset [n]$,
\begin{equation}
\mathbb{P}_{S,t-1}(S_t = T) = \mathbb{P}_{S^c,t-1}(S_t = T^c)
\end{equation}
and in particular for this proof
\begin{equation}\label{complementSetProcess}
\mathbb{E}_{S,t-1}\bigg[\sqrt{\pi_t(S_t^c)}\bigg] = \mathbb{E}_{S^c,t-1}\Big[\sqrt{\pi_t(S_t)}\Big] .
\end{equation}

Recall the notation $R_t = \pi_{t}(S_{t})/\pi_{t-1}(S_{t-1}).$ Observe that
$$\mathbb{E}^{P}_{S,t-1}\big[\sqrt{R_t}\big] \stackrel{(\ref{lem33eq})}{\leq} 1 - \varphi_t(S)^2/2 \stackrel{(\ref{evoIneqLemmaPsiPhi})}{\leq} 1 - \frac{1}{2}\left(\frac{1}{2}\frac{g_t}{1-\frac{1}{2}g_t}\Phi_t(S)\right)^2.$$
Fix $S_{t-1} = S \subset [n]$. If $\pi_{t-1}(S) \leq 1/2$, then
\begin{equation}\label{proofHashIneq1}
\frac{\pi_t(S^\#_t)}{\pi_{t-1}(S_{t-1}^\#)} \leq \frac{\pi_t(S_t)}{\pi_{t-1}(S)}
\end{equation}
because $\pi_t(S_t^\#) \leq \pi_t(S_t)$ by (\ref{StHash}) and $S_{t-1}^\# = S$ by assumption. Taking the expectation of (\ref{proofHashIneq1}) therefore yields
\begin{equation}\label{proofExpIneq1}
\mathbb{E}^{P}_{S,t-1}\Bigg[\sqrt{\tfrac{\pi_t(S^\#_t)}{\pi_{t-1}(S^\#_{t-1})}}\Bigg] \leq \mathbb{E}^{P}_{S,t-1}\big[\sqrt{R_t}\big] \leq 1 - \frac{1}{2}\left(\frac{1}{2}\frac{g_t}{1-\frac{1}{2}g_t}\Phi_t(S)\right)^2.
\end{equation}
Similarly, if $\pi_{t-1}(S) > 1/2$, then
\begin{equation}\label{proofHashIneq2}
\frac{\pi_t(S^\#_t)}{\pi_{t-1}(S^\#_{t-1})} \leq \frac{\pi_t(S_t^c)}{\pi_{t-1}(S^c)}
\end{equation}
because $\pi_t(S_t^\#) \leq \pi_t(S_t^c)$ by (\ref{StHash}) and $S^\#_{t-1} = S^c$ by assumption. Taking the expectation of the square root and starting from $S^c$ at $t-1$, using (\ref{complementSetProcess}), yields
\begin{equation}\label{proofExpIneq2}
\mathbb{E}^P_{S,t-1}\Bigg[\sqrt{\tfrac{\pi_t(S_t^\#)}{\pi_{t-1}(S^\#_{t-1})}}\Bigg] \leq \mathbb{E}^P_{S^c,t-1}\bigg[\sqrt{\tfrac{\pi_t(S_t)}{\pi_{t-1}(S^c)}}\bigg] \leq  1 - \frac{1}{2}\left(\frac{1}{2}\frac{g_t}{1-\frac{1}{2}g_t}\Phi_t(S^c)\right)^2.
\end{equation}

Note that $\pi_{t-1}(S^c) \leq 1/2$ so indeed $\Phi_t(S^c) \geq \inf \{\Phi_t(V) : V \subset [n] \text{ with } \pi_{t-1}(V) \leq 1/2 \}.$ Combining (\ref{proofExpIneq1}) and (\ref{proofExpIneq2}) hence results in
$$\mathbb{E}^{P}_{\{x\},0}\Bigg[\sqrt{\tfrac{\pi_t(S^\#_t)}{\pi_{t-1}(S_{t-1}^\#)}} \Bigg| S_{t-1}\Bigg] \leq 1 - \frac{1}{2}\left(\frac{1}{2} \frac{g_t}{1-\frac{1}{2}g_t}\Phi^*_t\right)^2.$$
Multiplying both sides by $\sqrt{\pi_{t-1}(S^\#_{t-1})}$ and then taking expectations gives by the tower property that
\begin{equation}\label{upperBoundRecursion}
\mathbb{E}^{P}_{\{x\},0}\left[\sqrt{\pi_t(S^\#_t)}\right] \leq \left[1 - \frac{1}{2} \left(\frac{1}{2} \frac{g_t}{1-\frac{1}{2}g_t}\Phi^*_t\right)^2\right] \mathbb{E}^{P}_{\{x\},0}\left[\sqrt{\pi_{t-1}(S_{t-1}^\#)}\right].
\end{equation}
Recursively applying (\ref{upperBoundRecursion}), we arrive at
$$\mathbb{E}^{P}_{\{x\},0}\left[\sqrt{\pi_t(S_t^\#)}\right] \leq \sqrt{\pi_0(x)} \cdot \prod_{s = 1}^{t} \left[1 - \frac{1}{2}\left(\frac{1}{2} \frac{g_s}{1-\frac{1}{2}g_s}\Phi^*_s\right)^2\right].$$ 
Clearly $\sqrt{\pi^{\mathrm{min}}_t} \cdot \mathbb{P}^{P}_{\{x\},0}(S_t^\# \neq \emptyset) \leq \mathbb{E}^{P}_{\{x\},0}\left[\sqrt{\pi_t(S_t^\#)}\right].$ Rearranging the terms gives
\begin{equation}\label{eqThmProof1}
\mathbb{P}^{P}_{\{x\},0}(S_t^\# \neq \emptyset) \leq \frac{\sqrt{\pi_0(x)}}{\sqrt{\pi^{\mathrm{min}}_t}}\cdot \prod_{s = 1}^{t} \left[1 - \frac{1}{2}\left(\frac{1}{2} \frac{g_s}{1-\frac{1}{2}g_s}\Phi^*_s\right)^2\right].
\end{equation}

We will now introduce four identities that relate $d(0,t)$ to (\ref{eqThmProof1}). Let $$\tau \coloneqq \inf\{t\geq 0 : S_t^\# = \emptyset\} = \inf\{t\geq 0 : S_t = \emptyset \text{ or } S_t = [n]\}.$$ By the optional stopping theorem and the law of total expectation
\begin{equation}
\begin{split} 
\pi_0(x) &= \mathbb{E}^P_{\{x\},0}[\pi_{\tau \wedge t}(S_{\tau \wedge t})] \\
&= \mathbb{E}^P_{\{x\},0}\big[\pi_\tau(S_\tau) \big| \tau \leq t\big]\mathbb{P}^P_{\{x\},0}(\tau \leq t) + \mathbb{E}^P_{\{x\},0}\big[\pi_t(S_t) \big| \tau > t\big]\mathbb{P}^P_{\{x\},0}(\tau > t). \label{eqThmProof2}
\end{split}
\end{equation}

On the other hand, by Proposition~\ref{Prop2}, for any $x,y \in [n]$
\begin{align} \label{eqThmProof3}
|P^{0,t}(x,y) - \pi_t(y)| &= \frac{\pi_t(y)}{\pi_0(x)} \Big|\mathbb{P}^{P}_{\{x\},0}(y \in S_t) - \pi_0(x)\Big|.
\end{align}
Note that $\{y \in S_t, \tau \leq t\} \subset \{S_\tau = [n], \tau \leq t\}$ (if $S_\tau = \emptyset$, then $y \notin S_t$ because $\emptyset$ is absorbing), but also
$\{S_\tau = [n], \tau \leq t\} \subset \{y \in S_t, \tau \leq t\}$ ($[n]$ is absorbing, so $S_t = [n]$ as well, thus containing $y$).
Therefore
\begin{equation}
\begin{split} \label{eqThmProof4}
\mathbb{P}^{P}_{\{x\},0}(y \in S_t) &= \mathbb{P}^{P}_{\{x\},0}(y \in S_t, \tau > t) + \mathbb{P}^{P}_{\{x\},0}(y \in S_t, \tau \leq t)  \\
&= \mathbb{P}^{P}_{\{x\},0}(y \in S_t, \tau > t) + \mathbb{P}^{P}_{\{x\},0}(S_\tau = [n], \tau \leq t).
\end{split}
\end{equation}
As a final preparation, note that also
\begin{equation} \label{eqThmProof5}
 \mathbb{P}^P_{\{x\},0}(S_\tau = [n], \tau \leq t) = \mathbb{E}^P_{\{x\},0}\big[\pi_\tau(S_\tau) \big| \tau \leq t\big]\mathbb{P}^P_{\{x\},0}(\tau \leq t).
\end{equation}
Hence, by (\ref{eqThmProof2})--(\ref{eqThmProof5}),
\begin{align*}
|P^{0,t}(x,y) - \pi_t(y)| &= \frac{\pi_t(y)}{\pi_0(x)} \Big|\mathbb{P}^{P}_{\{x\},0}(y \in S_t, \tau > t) - \mathbb{E}^P_{\{x\},0}[\pi_t(S_t) | \tau > t]\mathbb{P}^P_{\{x\},0}(\tau > t)\Big| \\
&\leq \frac{\pi_t(y)}{\pi_0(x)}\mathbb{P}^{P}_{\{x\},0}(\tau > t) = \frac{\pi_t(y)}{\pi_0(x)} \mathbb{P}^{P}_{\{x\},0}(S_t^\# \neq \emptyset).
\end{align*}
It is easy to show that $d(0,t) = \sup_{x} \|P^{0,t}(x,\cdot) - \pi_t(\cdot)\|_\mathrm{TV} \leq \frac{1}{2} \max_{x,y} \frac{|P^{0,t}(x,y) - \pi_t(y)|}{\pi_t(y)}$, see Chapter 4.7 in~\cite{LevinPeresWilmer2006} for a time-homogeneous version. So combining the above with (\ref{eqThmProof1}) results in
\begin{align*}
2d(0,t) &\le \max_{x,y} \frac{|P^{0,t}(x,y) - \pi_t(y)|}{\pi_t(y)} \\
&\leq \max_{x} \frac{1}{\pi_0(x)} \frac{\sqrt{\pi_0(x)}}{\sqrt{\pi^\mathrm{min}_t}}\prod_{s = 1}^{t} \left[1 - \frac{1}{2}\left(\frac{1}{2} \frac{g_s}{1-\frac{1}{2}g_s}\Phi^*_s\right)^2\right] \\
&\leq \frac{1}{\sqrt{\pi^\mathrm{min}_t \pi^\mathrm{min}_0}} \prod_{s = 1}^{t} \left[1 - \frac{1}{2}\left(\frac{1}{2} \frac{g_s}{1-\frac{1}{2}g_s}\Phi^*_s\right)^2\right].
\end{align*}
In order to mix, we want to find $t$, such that $d(0,t) \leq \varepsilon$ is implied. That is clearly the case when the right-hand side is smaller than $2\varepsilon$, which completes the proof.
\end{proof}

It is often impractical to compute all the individual $\Phi_s^*$. In Section 5 we compute a uniform lower bound on $\Phi_s^*$ that does not depend on the time $s$. With this application in mind, the statement of Theorem~\ref{upperBoundMixingTheorem} is more involved than necessary. 
Define
$$\Theta_t \coloneqq \min_{1 \leq s \leq t}\left(\frac{1}{2} \frac{g_s}{1-\frac{1}{2}g_s}\Phi^*_s\right)^2.$$
Note that $\Theta_t = 0$ for every $t \ge s$ if the state space becomes disconnected at time $s \ge 1$. So the following simplification is only useful when studying models where the state space remains connected long enough, otherwise Theorem~\ref{upperBoundMixingTheorem} is preferred. 

\begin{corollary} \label{MainThCor}
Fix $\varepsilon \in (0,1).$ If for some $t > 0$
\begin{equation}\label{corIneq}
t \geq \frac{2}{\Theta_t} \left[\log\left(\frac{1}{2\sqrt{\pi_0^{\mathrm{min}}\pi_t^{\mathrm{min}}}}\right) + \log\left(\varepsilon^{-1}\right)\right],
\end{equation}
then
$$d(0,t) \leq \varepsilon.$$
\end{corollary}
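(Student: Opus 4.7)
The plan is to derive Corollary~\ref{MainThCor} directly from Theorem~\ref{upperBoundMixingTheorem} by bounding the product in (\ref{upperBoundMixingTheoremEquation}) via the elementary inequality $1-x \le e^{-x}$, then rearranging algebraically to isolate $t$.

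First I would observe that by the definition of $\Theta_t$, for every $1 \le s \le t$ we have $\frac{1}{2}\bigl(\frac{1}{2}\frac{g_s}{1-\frac{1}{2}g_s}\Phi^*_s\bigr)^2 \ge \frac{1}{2}\Theta_t$, so each factor in the product satisfies
$$1 - \frac{1}{2}\Bigl(\frac{1}{2}\frac{g_s}{1-\frac{1}{2}g_s}\Phi^*_s\Bigr)^2 \;\le\; 1 - \frac{1}{2}\Theta_t \;\le\; \exp\bigl(-\tfrac{1}{2}\Theta_t\bigr).$$
Taking the product over $s=1,\dots,t$ and multiplying by $1/\sqrt{\pi^{\mathrm{min}}_0 \pi^{\mathrm{min}}_t}$ yields
$$\frac{1}{\sqrt{\pi^{\mathrm{min}}_0 \pi^{\mathrm{min}}_t}}\prod_{s=1}^{t}\Bigl[1 - \tfrac{1}{2}\bigl(\tfrac{1}{2}\tfrac{g_s}{1-\frac{1}{2}g_s}\Phi^*_s\bigr)^2\Bigr] \;\le\; \frac{1}{\sqrt{\pi^{\mathrm{min}}_0 \pi^{\mathrm{min}}_t}}\exp\bigl(-\tfrac{t\Theta_t}{2}\bigr).$$

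Next I would show that the hypothesis (\ref{corIneq}) is sufficient to force the right-hand side to be at most $2\varepsilon$, which by Theorem~\ref{upperBoundMixingTheorem} implies $d(0,t) \le \varepsilon$. Indeed, the bound $\frac{1}{\sqrt{\pi^{\mathrm{min}}_0 \pi^{\mathrm{min}}_t}}e^{-t\Theta_t/2} \le 2\varepsilon$ is equivalent, after taking logarithms and rearranging, to
$$\frac{t\Theta_t}{2} \;\ge\; \log\!\Bigl(\tfrac{1}{2\sqrt{\pi^{\mathrm{min}}_0\pi^{\mathrm{min}}_t}}\Bigr) + \log(\varepsilon^{-1}),$$
which is exactly the condition (\ref{corIneq}) after dividing by $\Theta_t/2$. (One should note that if $\Theta_t = 0$ the statement is vacuous, since no finite $t$ would satisfy (\ref{corIneq}); so we may assume $\Theta_t > 0$, making the division legitimate.)

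There is no real obstacle here — the work is already done in Theorem~\ref{upperBoundMixingTheorem}; the corollary is just a convenient weakening in which all the per-step quantities are replaced by their uniform (in $s \le t$) minimum so that the product collapses to a single exponential. The only minor care needed is to verify the algebraic identity
$$-\log(2\varepsilon\sqrt{\pi^{\mathrm{min}}_0\pi^{\mathrm{min}}_t}) \;=\; \log\!\Bigl(\tfrac{1}{2\sqrt{\pi^{\mathrm{min}}_0\pi^{\mathrm{min}}_t}}\Bigr) + \log(\varepsilon^{-1}),$$
which is immediate by splitting the logarithm.
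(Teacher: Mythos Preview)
Your proposal is correct and follows essentially the same approach as the paper's proof: bound each factor using the definition of $\Theta_t$ and the inequality $1-x \le e^{-x}$, then take logarithms to rearrange the condition of Theorem~\ref{upperBoundMixingTheorem} into the form (\ref{corIneq}). Your additional remark that the case $\Theta_t = 0$ is vacuous is a small piece of extra care not made explicit in the paper, but otherwise the arguments are identical.
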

\begin{proof}
By definition of $\Theta_t$,
$$ \prod_{s = 1}^{t} \left[1 - \frac{1}{2}\left(\frac{1}{2} \frac{g_s}{1-\frac{1}{2}g_s}\Phi^*_s\right)^2\right] \leq (1 - \frac{1}{2}\Theta_t)^t \leq e^{-t\Theta_t/2}.$$
If we have
\begin{equation}\label{corProofeq1}
\frac{1}{\sqrt{\pi_t^{\mathrm{min}}\pi_0^{\mathrm{min}}}} e^{-t\Theta_t/2} \leq 2\varepsilon
\end{equation}
then we can apply Theorem~\ref{upperBoundMixingTheorem}. Taking the logarithm of (\ref{corProofeq1}) gives the sufficient condition
$$t \geq \frac{2}{\Theta_t} \left[\log\Big(\tfrac{1}{2\sqrt{\pi_0^{\mathrm{min}}\pi_t^{\mathrm{min}}}}\Big) + \log\left(\varepsilon^{-1}\right)\right]$$ which implies $d(0,t) \le \varepsilon$ and thus
completes the proof.
\end{proof}

Let us write $$F(t) = \frac{2}{\Theta_t} \left[\log\left(\frac{1}{2\sqrt{\pi_0^{\mathrm{min}}\pi_t^{\mathrm{min}}}}\right) + \log\left(\varepsilon^{-1}\right)\right].$$ 
\begin{remark} \label{rhsFt}
In the time-homogeneous case, $F(t) \eqqcolon F$ does not depend on $t$. Thus, one can compute (or estimate) $F$, choose $t = F$, and from Corollary~\ref{MainThCor} conclude that $d(0,F) \le \varepsilon$, i.e.~ $t^{P}_{\mathrm{mix}}(\varepsilon, 0) \le F$. In fact, with only small adjustments, Corollary~\ref{MainThCor} implies Theorem 17.10 in~\cite{LevinPeresWilmer2006} (and as a consequence Corollary 2.3 in~\cite{JerrumSinclair}).

In the time-inhomogeneous case, this is unfortunately not immediate, as we only have $t \ge F(t)$ and $d(0,t) \le \varepsilon$, and apriori cannot conclude $d(0,F(t)) \le \varepsilon$.

One way around this issue is to construct $T(n),\tau(n) \in \mathbb{N}$, such that $T(n) \ge \tau(n)$ and
$$F(t) \le \tau(n) \qquad \text{ for every $t \in \{0, \dots, T(n)\}$.}$$
Then immediately $t^P_\mathrm{mix}(\varepsilon,0) \le \tau(n)$, by simply picking $t = \tau(n)$ and applying Corollary~\ref{MainThCor}. Since the choice of $T(n)$ and $\tau(n)$ depends on the model, we will make this more specific in the following sections after having introduced the random environment.
\end{remark}

\section{Random environment}\label{randEnvSection}
In Sections~\ref{definitionsSection} and~\ref{evSetsSection},  $P = (P_s)_{s\in\mathbb{Z}}$ has been a fixed sequence of transition matrices on $[n]$. In applications we have in mind, this sequence is interpreted as a random sample from an underlying distribution $\mathbf{P}$ on the sequences of transition matrices. We assume that $\mathbf{P}$ is such that the sequence $(P_s)_{s \in \mathbb{Z}}$ of transition matrices is a time-homogeneous Markov chain. For each sample $(P_s)_{s \in \mathbb{Z}}$ we consider the law of the stationary chain $\mathbb{P}^{(P_s)_{s \in \mathbb{Z}}}$ as in (\ref{ppEquation}) and proceed with the tools from Section~\ref{evSetsSection}.

Let $$\mathcal{S}_n \coloneqq \left\{P \in [0,1]^{n^2} : \sum_{y = 1}^{n} P(x,y) = 1 \quad \forall x \in [n] \right\} $$
be the space of all $n \times n$ stochastic matrices on $[n]$. Let $P = (P_s)_{s\in\mathbb{Z}}$ be some time-homogeneous Markov process on $\mathcal{S}_n$ with unique stationary distribution $\Pi$, and write $\mathbf{P}$ for the law of the corresponding stationary Markov process.
In particular, we have
$$\mathbf{P}(P_t \in A) = \Pi(A), \qquad t \in \mathbb{Z}, A \subset \mathcal{S}_n.$$

In this setting, we first give a sufficient condition under which $\pi_t$, as defined in Section~\ref{definitionsSection}, exists $\mathbf{P}$-almost surely. 
\begin{proposition}\label{piExistencePropRandom}
Assume that there exists a measurable set $A \subset \mathcal{S}_n$ with 
$$\varepsilon \coloneqq \sup \{\delta(P) : P \in A\} < 1$$ such that
$$\mathbf{P}\left(\sum_{k=1}^{\infty} \mathbbm{1}_A(P_k) = \infty \right) = 1.$$
Then $\pi_t$ exists $\mathbf{P}$-a.s.\ for every $t \in \mathbb{Z}.$
\end{proposition}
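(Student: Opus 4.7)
The plan is to reduce everything to Proposition~\ref{deterministicExistenceProp} applied pathwise. For a given realization of $(P_s)_{s\in\mathbb{Z}}$, that proposition produces $\pi_t$ once one exhibits, for each $t$, an infinite sequence $t_0=t>t_1>t_2>\cdots$ with $\delta(P^{t_k,t_{k-1}})\le 1-\varepsilon'$ for a fixed $\varepsilon'>0$. I would choose $\varepsilon'=1-\varepsilon$, so that the threshold becomes $\varepsilon$, and invoke Lemma~\ref{deltasubmult}: since $\delta(P_j)\le 1$ always and $\delta(P_k)\le\varepsilon$ whenever $P_k\in A$, a single visit to $A$ inside the window $\{s+1,\dots,r\}$ already forces $\delta(P^{s,r})\le\varepsilon$. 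The task thus reduces to showing that on a $\mathbf{P}$-full-measure event, for every fixed $t$ there are infinitely many $k<t$ with $P_k\in A$.

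The hypothesis only supplies such $k$ going forward in time, so the first substantive step is to transfer this recurrence to the past. Since $(P_s)_{s\in\mathbb{Z}}$ has a shift-invariant joint law, for every $M,N\in\mathbb{N}$ the random variables $\sum_{k=-M}^{0}\mathbbm{1}_A(P_k)$ and $\sum_{k=1}^{M+1}\mathbbm{1}_A(P_k)$ are equal in distribution. Letting $M\to\infty$ and using monotone convergence on both sides yields
$$\mathbf{P}\Big(\sum_{k=-\infty}^{0}\mathbbm{1}_A(P_k)\ge N\Big)=\mathbf{P}\Big(\sum_{k=1}^{\infty}\mathbbm{1}_A(P_k)\ge N\Big)=1$$
for every $N$, so the backward sum is a.s.\ infinite. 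This argument uses only shift-invariance, not any Markov structure, which matters because $(P_s)$ is not assumed reversible.

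Intersecting with the hypothesis, on a full-measure event $\Omega^*$ every $\omega$ satisfies: for every $T\in\mathbb{Z}$ there exist infinitely many $k\le T$ with $P_k\in A$. Fixing $\omega\in\Omega^*$ and $t\in\mathbb{Z}$, set $t_0=t$ and proceed inductively: given $t_{i-1}>-\infty$, pick any $k_i<t_{i-1}$ with $P_{k_i}\in A$; by the submultiplicativity computation above, $s=k_i-1$ lies in the set defining $t_i$, so its supremum is at least $k_i-1>-\infty$. As the definition forces $t_i<t_{i-1}$, this produces an infinite strictly decreasing sequence, and Proposition~\ref{deterministicExistenceProp} yields existence of $\pi_t(\omega)$. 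Taking a countable intersection of these full-measure events over $t\in\mathbb{Z}$ completes the proof.

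The main obstacle I anticipate is the time-reversal step: it is tempting to invoke a reversed Markov chain, but reversibility is not assumed and one has to work instead with the weaker shift-invariance of the joint law as above. The remaining steps are essentially a rewriting of the deterministic criterion in a probabilistic setting, with Lemma~\ref{deltasubmult} reducing the problem from controlling whole matrix products to controlling a single well-chosen factor.
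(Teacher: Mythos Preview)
Your proof is correct and follows essentially the same route as the paper: use stationarity of $(P_s)_{s\in\mathbb{Z}}$ to transfer the forward recurrence in $A$ to infinitely many visits in the past, then invoke submultiplicativity of $\delta$ to conclude. The only cosmetic difference is packaging: you verify the hypothesis of Proposition~\ref{deterministicExistenceProp} directly, whereas the paper re-derives $\lim_{s\to-\infty}\delta(P^{s,t})=0$ via a quantitative $\varepsilon^m$/$2^{-m}$ estimate and then appeals to Lemma~\ref{Lemmaexist}. One small remark: once you have your event $\Omega^*=\{\sum_{k\le 0}\mathbbm{1}_A(P_k)=\infty\}$, the hypothesis of Proposition~\ref{deterministicExistenceProp} holds for \emph{all} $t$ simultaneously on that single event, so the final countable intersection over $t$ is unnecessary (though harmless).
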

\begin{proof}
Let $t \in \mathbb{Z}$. Since $\mathbf{P}(\sum_{k=1}^\infty \mathbbm{1}_A(P_k) = \infty) = 1$, because of time-homogeneity and stationarity of $P$, for every $m \in \mathbb{N}$ there exists $T_m \in \mathbb{N}$ such that 
$$\mathbf{P}\Bigg(\sum_{k = 1 + t - T_m}^t \mathbbm{1}_A(P_k) \geq m\Bigg) = \mathbf{P}\bigg(\sum_{k=1}^{T_m} \mathbbm{1}_A(P_k) \geq m\bigg) \geq 1 - 2^{-m}.$$
Since $Q \in A$ implies $\delta(Q) \leq \varepsilon$, by submultiplicativity of $\delta$, we also have \\$\{\sum_{k = 1 + t - T_m}^t \mathbbm{1}_A(P_k) \geq m\} \subset \{\delta(P^{t-T_m,t}) \leq \varepsilon^m\} \subset \{ \lim_{s \to -\infty}\delta(P^{s,t}) \leq \varepsilon^m\}$. Hence
$$\mathbf{P}\left(\lim_{s \to -\infty} \delta(P^{s,t}) > \varepsilon^m\right) \leq \mathbf{P}\bigg(\sum_{k = 1 + t - T_m}^t \mathbbm{1}_A(P_k) < m\bigg) \leq 2^{-m}, \qquad m \in \mathbb{N}.$$
By picking $m$ arbitrarily large, $\varepsilon^m \to 0$, we conclude that $\lim_{s \to -\infty} \delta(P^{s,t}) = 0$, $\mathbf{P}$-a.s.
The $\mathbf{P}$-almost sure existence of $\pi_t$ follows for almost every $(P_s)_{s \in \mathbb{Z}}$ fixed separately by Lemma~\ref{Lemmaexist} and the same arguments as in Proposition~\ref{deterministicExistenceProp}.
\end{proof}

In the following, we are interested in the behavior of mixing time as $n \to \infty$. Consider a sequence of probability spaces $(\mathcal{S}_n,\mathcal{A}_n,\mathbf{P}_n)_{n\in\mathbb{N}}$. A sequence of events $(A_n)_{n\in\mathbb{N}}$ (where each $A_n \in \mathcal{A}_n$) is said to occur \emph{with high probability} (w.h.p.)~if
$$\lim_{n \to \infty}\mathbf{P}_n(A_n) = 1.$$
To simplify notation, we write $\mathbf{P}$ in place of $\mathbf{P}_n$. We use this concept of \emph{high probability} in the following corollary, where we give an explicit bound on mixing time that holds w.h.p.~only.

\begin{corollary} \label{probabilisticMainThCor} Assume there exists a constant $\beta > 0$ such that $\pi_t^{\mathrm{min}} \ge n^{-\beta}$ for every $t \in \{0, \dots, n \}$ w.h.p.~and furthermore assume there exists a constant $\kappa > 0$, such that $\Theta_t \ge \kappa$ for every $t\in\{0, \dots, n \}$ w.h.p. Then, for every $\varepsilon \in (0,1)$,
\begin{equation}
\lim_{n \to \infty} \mathbf{P}\bigg(t_\mathrm{mix}(\varepsilon,0) \le 1 + \frac{2}{\kappa} \Big[\log\big(\tfrac{n^\beta}{2}\big)+\log(\varepsilon^{-1})\Big]\bigg) = 1.
\end{equation}
\end{corollary}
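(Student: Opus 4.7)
The plan is to apply Corollary~\ref{MainThCor} on a high-probability event, following the strategy outlined in Remark~\ref{rhsFt} with the choice $T(n)=n$.

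First I would define the random event
$$B_n \coloneqq \Big\{\pi_t^{\mathrm{min}} \ge n^{-\beta}\ \text{and}\ \Theta_t \ge \kappa\ \text{for every } t \in \{0,\dots,n\}\Big\}.$$
By the two w.h.p.\ hypotheses combined with a union bound, $\mathbf{P}(B_n)\to 1$ as $n\to\infty$. All further estimates are to be carried out on this event, where the chain is deterministic.

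Next, working on $B_n$, I would bound $F(t)$ uniformly on $\{0,\dots,n\}$. Since $\pi_0^{\mathrm{min}}\pi_t^{\mathrm{min}}\ge n^{-2\beta}$ and $\Theta_t\ge\kappa$, the function from Remark~\ref{rhsFt} satisfies
$$F(t)=\frac{2}{\Theta_t}\Big[\log\Big(\tfrac{1}{2\sqrt{\pi_0^{\mathrm{min}}\pi_t^{\mathrm{min}}}}\Big)+\log(\varepsilon^{-1})\Big] \le \frac{2}{\kappa}\Big[\log\big(\tfrac{n^{\beta}}{2}\big)+\log(\varepsilon^{-1})\Big] \eqqcolon \tau^*(n),$$
for every $t\in\{0,\dots,n\}$. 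In the language of Remark~\ref{rhsFt}, this is precisely the uniform estimate $F(t)\le \tau^*(n)$ valid on the window $T(n)=n$.

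Now set $\tau(n)\coloneqq \lceil \tau^*(n)\rceil$. Since $\tau^*(n)$ grows only logarithmically in $n$, for $n$ large enough we have $\tau(n)\le n$, so $\tau(n)\in\{0,\dots,n\}$ and therefore $F(\tau(n))\le \tau^*(n)\le \tau(n)$. In other words, the hypothesis (\ref{corIneq}) of Corollary~\ref{MainThCor} is satisfied at $t=\tau(n)$, which gives $d(0,\tau(n))\le \varepsilon$ on $B_n$, hence
$$t_{\mathrm{mix}}^P(\varepsilon,0)\le \tau(n) \le 1 + \tau^*(n) = 1 + \frac{2}{\kappa}\Big[\log\big(\tfrac{n^{\beta}}{2}\big)+\log(\varepsilon^{-1})\Big].$$
Since $\mathbf{P}(B_n)\to 1$, the corollary follows. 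I do not expect a real obstacle here: the subtlety is only the implicit nature of the bound $t\ge F(t)$ in Corollary~\ref{MainThCor}, which is handled exactly by the $T(n)/\tau(n)$ device of Remark~\ref{rhsFt}; the extra $+1$ simply absorbs the ceiling.
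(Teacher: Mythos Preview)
Your proof is correct and follows essentially the same approach as the paper: define the high-probability event where both uniform bounds hold, on that event bound $F(t)$ by $\tau^*(n)$ for all $t\in\{0,\dots,n\}$, pick $t=\lceil\tau^*(n)\rceil$ (which lies in $\{0,\dots,n\}$ for large $n$ since $\tau^*(n)$ is logarithmic), and apply Corollary~\ref{MainThCor}. Your write-up is in fact slightly more explicit about the good event $B_n$ and the role of Remark~\ref{rhsFt} than the paper's version.
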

\begin{proof}
Fix $\varepsilon \in (0,1)$, let $n \in \mathbb{N}$ be large enough.
For any $t \in \{0, \dots, n\}$, it holds that 
\begin{equation}\label{corIneq1}
\frac{2}{\Theta_t} \Big[\log\big(\tfrac{1}{2\sqrt{\pi_0^{\mathrm{min}}\pi_t^{\mathrm{min}}}}\big)+\log(\varepsilon^{-1})\Big] \le \frac{2}{\kappa} \Big[\log\big(\tfrac{n^\beta}{2}\big)+\log(\varepsilon^{-1})\Big] \le n
\end{equation}
with high probability.
Let $t = \left \lceil{\frac{2}{\kappa} \Big(\log\big(\tfrac{n^\beta}{2}\big)+\log(\varepsilon^{-1})\Big)}\right\rceil \in \{0, \dots, n\}$. By Corollary~\ref{MainThCor}, (\ref{corIneq1}) implies that $d(0,t) \le \varepsilon$ with high probability. Hence
$$\lim_{n \to \infty}  \mathbf{P}\bigg(t_\mathrm{mix}(\varepsilon,0) \le 1 + \frac{2}{\kappa} \Big[\log\big(\tfrac{n^\beta}{2}\big)+\log(\varepsilon^{-1})\Big]\bigg) = 1$$
which completes the proof.
\end{proof}

\begin{remark}
In light of the discussion in Remark $\ref{rhsFt}$, this corollary eliminates the dependency on $t$ of the right-hand side of (\ref{corIneq}). For that purpose, in the notation of Remark~$\ref{rhsFt}$, we chose $T(n) = n$ and $\tau(n) = \frac{2}{\kappa} \big[\log\big(\tfrac{n^\beta}{2}\big)+\log(\varepsilon^{-1})\big]$.
\end{remark}

\begin{remark}
The quantity $t_\mathrm{mix}(\varepsilon,0)$ is a random variable that depends on the \emph{sequence} of transition matrices $(P_s)_{s\in\mathbb{Z}}$. As such, the randomness of the transition matrices and the randomness of the resulting chain are inherently viewed separately.

We point out that even small changes in the setup can significantly alter the interpretation of mixing time: In~\cite{10.1214/17-AAP1289}, the authors consider mixing time as a random variable that depends on the \emph{initial state} of the environment, whereas the dynamics of the environment are observed jointly with the random walk. Therefore the dynamics of the environment can speed up the mixing. As an example, if the environment is likely to undergo significant changes in every time step, the mixing time as defined in~\cite{10.1214/17-AAP1289} is essentially constant.
\end{remark}

\section{Dynamic Erd\H{o}s-R\'enyi graphs}\label{exampleSection}
We now demonstrate how the results from Section~\ref{evSetsSection} and their Corollary~\ref{probabilisticMainThCor} can be applied to a concrete example, a random walk on an Erd\H{o}s-R\'enyi graph that is independently resampled after each time step.

Let $(G_t)_{t\in\mathbb{Z}}$ be a sequence of independent Erd\H{o}s-R\'enyi graphs of size $n \in \mathbb{N}$ with parameter $p \in (0,1).$
We introduce the following notation: We write $\mathrm{deg}_t(x)$ for the (random) degree of vertex $x$ at time $t$ and we denote by $x \sim_t y$ that $x$ and $y$ are connected by an edge in the graph $G_t$. 

Given $G_t$, the transition matrix $P_t$ corresponding to a lazy simple random walk on the graph is given by
\begin{equation*}
P_t(x,y) = 
\begin{cases*}
1/2 & if $x = y$ and $\mathrm{deg}_t(x) \geq 1$, \\
1 & if $x = y$ and $\mathrm{deg}_t(x) = 0$, \\
1/(2\mathrm{deg}_t(x)) & if $x \neq y$ and $x \sim_t y$, \\
0 & otherwise.
\end{cases*}
\end{equation*}
Since the transition matrices are i.i.d., $(P_s)_{s\in \mathbb{Z}}$ is a time-homogeneous Markov chain that has a stationary distribution $\Pi$ determined by the edge probability $p$. As before we write~$\mathbf{P}$ for the distribution of $(P_s)_{s\in\mathbb{Z}}.$

Observe that the complete graph is sampled with positive probability and thus infinitely many $G_t$'s are complete graphs $\mathbf{P}$-almost surely. The assumptions of Proposition~\ref{piExistencePropRandom} are therefore easily verified and it follows that $(\pi_t)_{t \in \mathbb{Z}}$ exists for $\mathbf{P}$-almost every sequence~$(P_t)_{t \in \mathbb{Z}}.$

In the above construction we assume that each graph is independent from the previous. This assumption seems restrictive, however we will see that even under this assumption, the subsequent proofs are not completely trivial. For that reason, we prefer to stick to this simpler setting. One could expand the graph dynamic to a Markovian model where non-existent edges appear with probability $p$ and existing edges disappear with probability~$q$. This introduces correlation between $\pi_t$ and $P_{t+1}$ which makes the proof techniques we present here fail in some crucial aspects.

Additionally, we make some assumptions on the parameter $p$. In particular, we choose~$p$ such that the graphs are strongly connected significantly above the usual connectivity threshold for Erd\H{o}s-R\'enyi graphs. We did not make an effort to  optimize the connectivity requirement, but it is not trivial to significantly lower it.

\begin{theorem} \label{logUpperBound}
Let $p = \frac{\eta \log n}{n-1}$ with $\eta > 50$.
Then there exists a constant $c' = c'(\eta)$, such that for every $\varepsilon \in (0,1)$
\begin{equation}\label{logUpperBoundEq}
\lim_{n \to \infty} \mathbf{P}\left(t_{\mathrm{mix}}(\varepsilon,0) \leq c' [\log n + \log(\varepsilon^{-1})]\right) = 1.
\end{equation}
\end{theorem}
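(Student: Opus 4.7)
The plan is to apply Corollary~\ref{probabilisticMainThCor}, which reduces the theorem to verifying two quantitative inputs with high probability, uniformly in $t\in\{0,\dots,n\}$: a polynomial lower bound $\pi_t^{\mathrm{min}}\geq n^{-\beta}$, and a uniform constant lower bound $\Theta_t\geq\kappa$. Both will be extracted from the stronger sandwich estimate that, with high probability, $\pi_t(x)\in [c_1/n,c_2/n]$ for every $x\in[n]$ and every $t\in\{0,\dots,n\}$, with constants $c_1,c_2>0$ depending only on $\eta$.

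The preparatory step is a uniform graph-concentration statement. By Chernoff bounds and a union bound over $O(n^2)$ pairs (vertex, time), every vertex has $\mathrm{deg}_s(x)\in[(1-\delta)np,(1+\delta)np]$ and every cut satisfies $|E_s(S,S^c)|\geq(1-\delta)|S||S^c|p$ for all $s\in\{-K,\dots,n\}$ (with $K=\lceil\log n\rceil^2$, say), with probability $1-O(n^{3-c\eta})$. This is where the assumption $\eta>50$ is used: it makes the per-event failure probability $n^{-c\eta}$ small enough to absorb all subsequent union bounds. A second crucial ingredient is that, because the graphs are sampled independently at each time, $\pi_{t-1}$ is independent of $G_t$.

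The heart of the argument is a bootstrap controlling $\pi_t$. Using only $\sum_y\pi_{t-1}(y)=1$ and the lower bound on degrees, one gets the deterministic one-step inequality
\begin{equation*}
\pi_t(x) \leq \tfrac{1}{2}\pi_{t-1}(x) + \tfrac{1}{2(1-\delta)np}.
\end{equation*}
Iterating this $O(\log\log n)$ steps from the trivial $\pi_s(y)\leq 1$ yields the \emph{crude} bound $\pi_s(y)\leq C_0/(np)=O(1/\log n)$ at every relevant $s$. Plugging this crude bound into a Bernstein inequality applied to the random sum $\sum_{y\sim_t x}\pi_{t-1}(y)/\mathrm{deg}_t(y)$ (independent of $G_t$, with conditional mean $\approx 1/n$ and each summand of order $1/(n\log n)$) yields a per-event failure probability $n^{-\omega(1)}$; after a further union bound over $(x,t)$ this sharpens to the \emph{fine} one-step estimate $\pi_t(x)\leq\tfrac12\pi_{t-1}(x)+(1+o(1))/n$, whose iteration over $O(\log n)$ steps produces $\pi_t(x)\leq(2+o(1))/n$. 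A symmetric argument, via Bernstein's lower tail for $\sum_y\pi_{t-1}(y)\mathbbm{1}_{y\sim_t x}\geq p/2$ combined with $\mathrm{deg}_t(y)\leq(1+\delta)np$, forces $\pi_t(x)\geq c_1/n$.

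Once the sandwich $\pi_t\asymp 1/n$ is established, the hypotheses of Corollary~\ref{probabilisticMainThCor} follow routinely: $\pi_t^{\mathrm{min}}\geq c_1/n\geq n^{-2}$, and $g_t=\min_z \pi_{t-1}(z)/\pi_t(z)\geq c_1/c_2$. For the bottleneck ratio, any $S$ with $\pi_{t-1}(S)\leq 1/2$ satisfies $|S|\leq n/(2c_1)$, so edge expansion combined with the degree upper bound and $\pi_{t-1}(x)\geq c_1/n$ gives $Q_t(S,S^c)\geq\kappa'\pi_{t-1}(S)$ for a universal $\kappa'$; hence $\Phi^*_t\geq\kappa'$ and $\Theta_t\geq\kappa$ w.h.p. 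Corollary~\ref{probabilisticMainThCor} then yields (\ref{logUpperBoundEq}) with a constant $c'=c'(\eta)$. The main technical obstacle is exactly the bootstrap of the third paragraph: Bernstein concentration demands a preliminary bound on $\pi_{t-1}$, while the only a priori bound is the useless $\pi_{t-1}\leq 1$. The resolution is to first run the purely deterministic coarse iteration to obtain the universal but weaker bound $\pi_{t-1}=O(1/\log n)$, and only then invoke probabilistic concentration; the independence of $\pi_{t-1}$ and $G_t$ afforded by the i.i.d.\ edge resampling is indispensable here and matches the authors' remark that correlated edge dynamics would break the approach.
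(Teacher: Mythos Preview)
Your overall architecture is correct and matches the paper: reduce to Corollary~\ref{probabilisticMainThCor} via a sandwich estimate $\pi_t(x)\in[c_1/n,c_2/n]$, then deduce the bottleneck and $g_t$ bounds. The problem is the bootstrap in your third paragraph.

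Your ``crude'' step is fine: iterating $\pi_t(x)\le\tfrac12\pi_{t-1}(x)+\tfrac{1}{2(1-\delta)np}$ indeed yields $\pi_s(y)\le C_0/\log n$. But your claimed Bernstein step from there to $O(1/n)$ does not work. You assert that each summand of $\sum_{y\sim_t x}\pi_{t-1}(y)/\mathrm{deg}_t(y)$ is ``of order $1/(n\log n)$''; under the crude bound it is only of order $C_0/(\log n)^2$, since $\pi_{t-1}(y)\le C_0/\log n$ and $\mathrm{deg}_t(y)\asymp\eta\log n$. Plugging the correct size into Bernstein (or any Chernoff-type bound) for the normalized sum $W=\sum_y\pi_{t-1}(y)\mathbbm{1}_{y\sim_t x}$, with $\mathbb{E}[W\mid\pi_{t-1}]\approx p$ and $\max_y\pi_{t-1}(y)\le C_0/\log n$, gives an exponent of order $p/\max_y\pi_{t-1}(y)\asymp(\log n)^2/n$, hence a failure probability $\exp(-c(\log n)^2/n)\to 1$, not $n^{-\omega(1)}$. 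In the adversarial configuration where $\pi_{t-1}$ puts mass $C_0/\log n$ on $\sim\log n$ vertices, $W$ is essentially a rescaled Poisson of tiny mean and has no concentration near~$p$. So one cannot jump from $O(1/\log n)$ to $O(1/n)$ in a single probabilistic step.

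The paper resolves this by a \emph{slow} multiplicative descent: it defines a sequence $\alpha_2^{(t)}=(1-\varepsilon)\alpha_2^{(t-1)}$ (starting at $n/2$) and proves inductively that $\pi_t(x)\le\alpha_2^{(t)}/n$. The point is that at each step one only seeks a $(1-\varepsilon)$ improvement, and the normalized weights $(n-1)\pi_{t-1}(y)/\alpha_2^{(t)}$ are kept $\le 1$, which is exactly what the Chernoff bound~(\ref{concDeltaEst}) requires; moreover the deviation parameter $\hat\delta$ is allowed to scale with $\alpha_2^{(t)}$, so the exponent stays of order $\log n$ throughout the descent. This takes $O(\varepsilon^{-1}\log n)$ steps, carried out over a long stretch of \emph{past} times (the window $\{0,\dots,n^2\}$ of Proposition~\ref{PiConcProp}), after which stationarity of the environment transfers the bound to the target window $\{0,\dots,n\}$. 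The lower bound on $\pi_t$ has the same obstruction; the paper first obtains an extremely weak but deterministic lower bound $\pi_t(x)\ge(16\eta\log n)^{-(n-1)}$ from connectivity and path counting, then runs an analogous slow multiplicative ascent. Your proposed single Bernstein lower-tail step has the same effective-sample-size problem and would not produce a polynomially small failure probability either.
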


\begin{remark}
By the stationarity of the random graphs, $t_\mathrm{mix}(\varepsilon,0) \stackrel{d}{=} t_\mathrm{mix}(\varepsilon,s)$ for all $s$. Therefore we only consider $t_\mathrm{mix}(\varepsilon,0).$
\end{remark}

\begin{remark}\label{falseIntuition}
Before every time step, all edges in the graph are independently resampled. This leads to the tempting conclusion that the random walk can forget its starting position very quickly and $t_\mathrm{mix}(\varepsilon,0) = c'(\varepsilon)$ for some constant $c'$ independent of $n$. This intuition is \emph{false} (if $p \ll 1$), since our concept of mixing is \emph{quenched}: We fix the transition matrices beforehand and then view mixing time of the random walk given that fixed sequence of transition matrices. We prove a lower bound on mixing time in Theorem~\ref{logLowerBound}, demonstrating that mixing time is indeed not constant.
\end{remark}

To prove Theorem~\ref{logUpperBound}, we apply Corollary~\ref{probabilisticMainThCor}. To this end, we show that w.h.p.~for every $t \in \{0, \dots, n\}$, $\pi_t^{\mathrm{min}} \ge \frac{\alpha_1^*}{n}$ for some $0 < \alpha_1^* < 1$ independent of $n$ and $\Theta_t \geq \kappa > 0$ for some constant $\kappa$ independent of $n$.

\subsection{Lower and upper bounds for $\pi_t$}

In this section, we work under the assumptions of Theorem~\ref{logUpperBound}. That is to say
\begin{equation}\label{pAssumptions}
p = \frac{\eta \log n}{n-1} \text{ with } \eta > 50.
\end{equation}

The aim is to show there exist some constants $\alpha_1^*(\eta) \in (0,1)$, $\alpha_2^*(\eta) > 1$ and $\beta > 0$ independent of $n$, such that for all $n$ large enough
\begin{equation} \label{piconcequation2}
\mathbf{P}\Big( \frac{\alpha_1^*}{n} \le \pi_t(x) \le \frac{\alpha_2^*}{n}, \forall t \in \{0, ..., n\}, \forall x \in [n] \Big) \ge 1 - n^{-\beta}.
\end{equation}

The following lemma provides a first basic, yet useful, upper bound for $\pi_t$.
\begin{lemma}\label{halfBoundLemma}
Let $(P_s)_{s \in \mathbb{Z}}$ be a realization from $\mathbf{P}$, such that $(\pi_s)_{s \in \mathbb{Z}}$ exists. Then $$\pi_t(x) \le \tfrac{1}{2}, \qquad \text{ for all } x \in [n], \text{ for all } t \in \mathbb{Z}.$$
\end{lemma}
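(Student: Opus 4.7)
The plan is to combine the recurrence $\pi_t = \pi_{t-1} P_t$ from Lemma~\ref{matrixMultProp} with a column bound on the transition matrix that is specific to the lazy simple random walk on a graph. The key observation is that whenever $x$ is not isolated in $G_t$, the matrix $P_t$ satisfies $P_t(y, x) \le 1/2$ for \emph{every} $y \in [n]$: indeed, $P_t(x, x) = 1/2$ by laziness, and for $y \ne x$ either $P_t(y, x) = 0$ or $P_t(y, x) = 1/(2 \deg_t(y)) \le 1/2$. Averaging this column bound against any probability measure $\mu$ gives $(\mu P_t)(x) \le \tfrac{1}{2}$; applied with $\mu = \pi_{t-1}$ it yields $\pi_t(x) \le 1/2$ at every time $t$ at which $\deg_t(x) \ge 1$.

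It then remains to propagate the bound through times at which $x$ is isolated. If $\deg_t(x) = 0$, then $P_t(x, x) = 1$ and $P_t(y, x) = 0$ for $y \ne x$, so the recurrence reduces to $\pi_t(x) = \pi_{t-1}(x)$. Thus, setting $t_0 := \sup\{s \le t : \deg_s(x) \ge 1\}$, provided $t_0$ is finite one has $\pi_{t_0}(x) \le 1/2$ by the first step, and $\pi_t(x) = \pi_{t_0}(x) \le 1/2$ by iterating the isolation identity across $t_0 < s \le t$.

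The only mildly delicate point, and the one that requires invoking the hypothesis of the lemma rather than a Borel--Cantelli argument on the environment, is to show that $t_0 > -\infty$. If instead $x$ were isolated in every $G_s$ with $s \le t$, then $P^{s,t}(y, x) = \mathbbm{1}_{\{y = x\}}$ for all $s \le t$, so $\lim_{s \to -\infty} P^{s,t}(y, x)$ depends on $y$. This contradicts the rank-one limit required for $\pi_t$ to exist, ruling out perpetual isolation of $x$ and completing the argument.
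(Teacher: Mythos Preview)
Your proof is correct and follows essentially the same approach as the paper: the paper also splits into the cases ``$x$ not isolated at time $t$'' (where the column bound $P_t(y,x)\le\tfrac12$ gives $\pi_t(x)\le\tfrac12$ directly) and ``$x$ isolated at time $t$'' (where $\pi_t(x)=\pi_{t-1}(x)$), and then rules out perpetual isolation by the same rank-one contradiction you give. Your formulation via the explicit column bound and the last non-isolation time $t_0$ is slightly more streamlined, but the argument is the same.
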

\begin{proof}
Note that $P_t(x,x) = \frac{1}{2}$ (which holds unless $x$ is isolated) implies that 
\begin{equation}\label{piless1/2}
\pi_t(x) \stackrel{{(\ref{invarianceProperty})}}{=} \frac{1}{2}\pi_{t-1}(x) + \sum_{y \neq x} \pi_{t-1}(y)P_t(y,x)\leq \frac{1}{2} [\pi_{t-1}(x) + (1 - \pi_{t-1}(x))] \leq \frac{1}{2} 
\end{equation}
for every $t \in \mathbb{Z}, x\in[n]$. If $P_t(x,x) = 1$, then $\pi_t(x) = \pi_{t-1}(x)$. So $\pi_t(x) > \tfrac{1}{2}$ is only possible when $P_s(x,x) = 1$ for all $s < t$, meaning that $x$ has been isolated for the entire history of the graph. Then $\pi_t$ is not well-defined, since $\lim_{s \to -\infty} \mathbb{P}_{x,s}^P(X_t = x) = 1$, yet for any $z \neq x$, $\lim_{s \to -\infty} \mathbb{P}_{z,s}^P(X_t = x) = 0.$ This contradicts the assumption that $(\pi_s)_{s \in \mathbb{Z}}$ exists.
\end{proof}
With the next proposition we gain good control over $\pi_t$. Its proof will take up the remainder of this section.
\begin{proposition}\label{PiConcProp}
There exist constants $\alpha_1^*(\eta) \in(0,1)$, $\alpha_2^*(\eta) > 1$ and $\beta > 0$ independent of $n$, such that for all $n$ large enough there exists
a non-decreasing sequence $(\alpha_1^{(t)})_{0 \le t \le n^2}$ and a non-increasing sequence $(\alpha_2^{(t)})_{0 \le t \le n^2}$ with 
$\alpha_1^{(t)} = \alpha_1^*$ and $\alpha_2^{(t)} = \alpha_2^*$ for all $t \ge n^2 - n$, such that
\begin{equation} \label{piconcequation}
\mathbf{P}\bigg(\frac{\alpha_1^{(t)}}{n} \le \pi_t(x) \le \frac{\alpha_2^{(t)}}{n}, \forall t \in \{0, ..., n^2\}, \forall x \in [n]\bigg) \ge 1 - n^{-\beta}.
\end{equation}
\end{proposition}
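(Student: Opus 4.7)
The plan is to propagate concentration bounds on $\pi_t$ forward in time using the identity $\pi_t = \pi_{t-1}P_t$ from Lemma~\ref{matrixMultProp}: since $G_t$ is independent of the past, conditional on $\pi_{t-1}$ all the fresh randomness is contained in $G_t$, and concentration of the linear combination
\[
\pi_t(x) = \tfrac{1}{2}\pi_{t-1}(x) + \sum_{\substack{y \sim_t x\\ y\ne x}} \frac{\pi_{t-1}(y)}{2\deg_t(y)}
\]
should yield a deterministic recursion on the bounds $\alpha_1^{(t)}/n \le \pi_t(x) \le \alpha_2^{(t)}/n$ that contracts to constants $\alpha_1^* \in (0,1)$ and $\alpha_2^* > 1$. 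First I would define a ``good graph'' event $E_t$ for $G_t$ encompassing (i) uniform degree concentration $(1-\varepsilon)np \le \deg_t(y)\le (1+\varepsilon)np$ for every $y\in[n]$, and (ii) concentration of $\sum_{y \sim_t x} \pi_{t-1}(y)/\deg_t(y)$ around its $G_t$-conditional mean for every $x$. To handle the dependence between the numerator and denominator I would split $\deg_t(y) = \mathbbm{1}_{y\sim_t x} + \deg_t^{(-x)}(y)$, condition on the edges of $G_t$ not touching $x$ (which makes $\deg_t^{(-x)}(\cdot)$ and hence the weights $\pi_{t-1}(y)/(2(1+\deg_t^{(-x)}(y)))$ deterministic), and then apply Bernstein to the sum of independent Bernoullis $\mathbbm{1}_{y \sim_t x}$. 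Because $\eta > 50$, Chernoff gives $\mathbf{P}(E_t)\ge 1-n^{-c}$ for some $c > 2+\beta$, and a union bound over $t \in \{1,\dots,n^2\}$ and $x \in [n]$ delivers $\mathbf{P}(\bigcap_t E_t) \ge 1 - n^{-\beta}$.

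On the event $\bigcap_t E_t$ I would turn (i)--(ii) into a one-step recursion. The lower bound $\pi_t(x) \ge c_1/n$ follows in a single step from the lower version of (ii) together with the trivial $\pi_{t-1}(y) \le 1/2$ from Lemma~\ref{halfBoundLemma}, so I simply set $\alpha_1^{(t)} := \alpha_1^* := c_1$ for all $t\ge 1$. For the upper bound, substituting the inductive hypothesis $\pi_{t-1}(y) \le \alpha_2^{(t-1)}/n$ into (ii) yields $\pi_t(x) \le \tfrac{1}{2}\pi_{t-1}(x) + c_2/n$, so $\alpha_2^{(t)} \le \alpha_2^{(t-1)}/2 + c_2$ for a constant $c_2$ independent of $n$. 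Starting from $\alpha_2^{(0)} := n/2$ (again Lemma~\ref{halfBoundLemma}) this contraction reaches its fixed point $\alpha_2^* := 2c_2$ after $O(\log n)\ll n^2-n$ steps, at which point I freeze the sequence at $\alpha_2^{(t)} := \alpha_2^*$ for $t \ge n^2-n$, producing the monotone sequences required by the statement.

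The main technical obstacle lies precisely in the multiplicative concentration (ii) at early times: when $\|\pi_{t-1}\|_\infty$ is still of constant order, a few heavy vertices dominate the variance of $\sum_{y\sim_t x}\pi_{t-1}(y)$ and Bernstein only delivers useful multiplicative control once $\|\pi_{t-1}\|_\infty = O(1/n)$. The workaround is a short bootstrap phase: using the trivial deterministic bound $\sum_{y\sim_t x}\pi_{t-1}(y) \le 1$ together with degree concentration one obtains $\pi_t(x) \le \pi_{t-1}(x)/2 + O(1/\log n)$, which iterated $O(\log n)$ times drives $\|\pi_t\|_\infty$ down to $O(1/\log n)$ and eventually, via progressively sharper Bernstein estimates, into the $O(1/n)$ regime where the clean recursion above kicks in. Stitching these phases into a single non-increasing sequence $(\alpha_2^{(t)})$ and verifying that the entire descent fits comfortably within $O(\log n)$ steps (hence within the union-bound budget made available by $\eta > 50$) is the delicate bookkeeping that drives the proof.
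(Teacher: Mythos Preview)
Your overall strategy---one-step propagation via $\pi_t=\pi_{t-1}P_t$, Chernoff/Bernstein on the fresh graph $G_t$, and a bootstrap to handle heavy vertices at early times---is the paper's strategy, and you correctly locate the main difficulty. But there is a real gap in the lower-bound part.

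\textbf{The lower bound does not follow in a single step.} You claim $\pi_t(x)\ge c_1/n$ for all $t\ge 1$ from the lower-tail version of (ii) together with $\pi_{t-1}\le 1/2$. The lower tail of $\sum_{y\sim_t x}\pi_{t-1}(y)/\deg_t(y)$ is \emph{not} controlled under that hypothesis: if essentially all of the $\pi_{t-1}$-mass sits on two vertices $z_1,z_2$ (each with mass $\le 1/2$, so Lemma~\ref{halfBoundLemma} is respected), then with probability $(1-p)^2 = 1-o(1)$ neither is adjacent to $x$ and the sum is essentially~$0$. Hence the event $E_t$ you describe cannot include the lower half of~(ii) with probability $1-n^{-c}$ unless you already know $\|\pi_{t-1}\|_\infty=O(1/n)$; the bound $\pi_{t-1}\le 1/2$ is far too weak. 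The paper's fix is to run the two bounds in sequence: first push the \emph{upper} bound all the way down to the constant $\alpha_2^*$ (Lemma~\ref{descentLemma}), and only \emph{afterwards} use that upper bound as the weight control needed for the lower-tail Chernoff (Lemma~\ref{ascentLemmaHelper}, case $t>n^{1.2}$). While the upper bound is still descending, the paper supplies a separate, crude positive floor $\alpha_1^{(t)}=n/(16\eta\log n)^{n-1}$ coming from a deterministic connectivity argument (a path of length $n-1$ through $n$ consecutive connected graphs). Your proposal has no analogue of either of these ingredients.

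\textbf{On the upper bound.} Your two-phase bootstrap is workable, but not quite as written: plain Bernstein with a fixed multiplicative deviation has exponent of order $p/\|\pi_{t-1}\|_\infty$, which is $o(1)$ until $\|\pi_{t-1}\|_\infty=O(1/n)$, so the recursion $\alpha_2^{(t)}\le \alpha_2^{(t-1)}/2 + c_2$ with a \emph{fixed} $c_2$ is not what you get in the intermediate regime. What does work is Bernstein (or Chernoff) in its large-deviation form, with the deviation scaled to the current $\alpha_2^{(t-1)}$; this yields a geometric contraction $\alpha_2^{(t)}\le \theta\,\alpha_2^{(t-1)}$ for some $\theta<1$ throughout, reaching $O(1)$ in $O(\log n)$ steps. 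The paper takes a slightly different route: it contracts only by $(1-\varepsilon)$ per step but uses the Chernoff bound \eqref{concDeltaEst} with $\hat\delta$ of order $\alpha_2^{(t-1)}$, whose exponent scales like $\log(\alpha_2^{(t-1)})\cdot\eta\log n$ and is therefore large even at $\alpha_2^{(t-1)}\approx n/2$. Either variant works; the essential point is that the deviation parameter must be allowed to depend on the current bound, which your sketch does not make explicit.
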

\begin{remark} \label{remarkFiveSeven}
By time-invariance of the random graph sequence, (\ref{piconcequation2}) is an immediate consequence of (\ref{piconcequation}), with those same constants $\alpha_1^*(\eta) \in(0,1)$, $\alpha_2^*(\eta) > 1$ and $\beta > 0$. 
\end{remark}

In preparation of the proof of Proposition~\ref{PiConcProp}, we give some definitions and notation. To specify the sequences used in (\ref{piconcequation}), we fix constants $\beta = 0.3, \alpha_1^* = 0.002, \alpha_2^*~=~7, \varepsilon~=~10^{-4}$ independent of $n$. These constants are chosen to make the proof work, but are not optimized to achieve the best possible constant $c'$ in Theorem~\ref{logUpperBound}. We define the sequences
\begin{equation}\label{alphatwo}
\alpha_2^{(t)} = 
\begin{cases}
\frac{n}{2}, &\mbox{if } 0 \le t \le n^{1.1}, \\
(1-\varepsilon)\alpha_2^{(t-1)}, &\mbox{if } n^{1.1} < t \le n^{1.2} \text{ and } (1-\varepsilon)\alpha_2^{(t-1)} \ge \alpha_2^*, \\
\alpha_2^*, &\mbox{otherwise,}
\end{cases}
\end{equation}
and
\begin{equation}\label{alphaone}
\alpha_1^{(t)} = 
\begin{cases}
0, &\mbox{if } 0 \le t \le n^{1.1}, \\
\frac{n}{(16\eta\log n)^{n-1}}, &\mbox{if } n^{1.1} < t \le n^{1.2}, \\
(1 + \varepsilon)\alpha_1^{(t-1)}, &\mbox{if } n^{1.2} < t < n^{2} - n \text{ and } (1+\varepsilon)\alpha_1^{(t-1)} \le \alpha_1^*, \\
\alpha_1^*, &\mbox {otherwise.}
\end{cases}
\end{equation}
Note that $\alpha_1^{(t)}$ and $\alpha_2^{(t)}$ depend on $n$.

We then define the events
\begin{align*}
F_{u,t} &\coloneqq \big\{ \pi_t(x) \leq \tfrac{\alpha_2^{(t)}}{n},\, \forall x \in [n]\big\},\\
F_{l,t} &\coloneqq \big\{ \pi_t(x) \geq \tfrac{\alpha_1^{(t)}}{n},\, \forall x \in [n]\big\},\\
F_t &\coloneqq F_{u,t} \cap F_{l,t},
\end{align*}
so that (\ref{piconcequation}) is equivalent to $\mathbf{P}(\cap_{t \le n^2} F_t) \ge 1 - n^{-\beta}.$
We note that, due to Lemma~\ref{halfBoundLemma},
\begin{equation}\label{EventIsZeroEq}
\mathbf{P}(F_0) = 1.
\end{equation}
Furthermore, we fix $$c_1 = \frac{11}{21}\eta, \quad c_2 = 2\eta.$$ We define
\begin{align*}
D_t &\coloneqq \{ \mathrm{deg}_t(x) \in [c_1 \log n, c_2 \log n],\, \forall x \in [n]\},\\
C_t &\coloneqq \{G_k \text{ is a connected graph for all } 0 \le k < t\}.
\end{align*}
It is well-known for Erd\H{o}s-R\'enyi graphs with $p$ as in (\ref{pAssumptions}) that $D_1$ and $C_1$ are events that occur with high probability. However, Proposition $\ref{PiConcProp}$ requires quantitative estimates which we collect in Lemma~\ref{degreeLemma} and Lemma~\ref{connectedGraphLemma} below. Their proofs are given in Appendix~\ref{appendix:concentration}.

\begin{lemma}\label{degreeLemma}
There exists $\rho > 2 + \beta$, such that for every $n$ large enough and every $t \in \mathbb{Z}$,
\begin{equation}\label{degLemIneq}
\mathbf{P}(D_t^c) \le n^{-\rho}.
\end{equation}
\end{lemma}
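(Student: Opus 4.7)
The plan is to treat each vertex's degree as a binomial random variable and apply a standard Chernoff bound, then union bound over the $n$ vertices. Since the graphs $G_t$ are i.i.d.\ across $t$, it suffices to handle a single $t$, so the time index plays no role in the estimate.

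First, I would note that for any fixed vertex $x$ and time $t$, $\mathrm{deg}_t(x) \sim \mathrm{Bin}(n-1,p)$ with mean $\mu = (n-1)p = \eta \log n$. The two-sided deviation I need is that $\mathrm{deg}_t(x) \in [c_1 \log n, c_2 \log n] = [\tfrac{11}{21}\mu, 2\mu]$. This corresponds to an upper-tail deviation with $\delta = 1$ and a lower-tail deviation with $\delta = \tfrac{10}{21}$ in the standard multiplicative Chernoff bounds
\[
\mathbf{P}(X \ge (1+\delta)\mu) \le \exp\!\Big({-}\tfrac{\delta^2 \mu}{2+\delta}\Big), \qquad \mathbf{P}(X \le (1-\delta)\mu) \le \exp\!\Big({-}\tfrac{\delta^2 \mu}{2}\Big).
\]

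Plugging in gives $\mathbf{P}(\mathrm{deg}_t(x) \ge 2\eta\log n) \le n^{-\eta/3}$ and $\mathbf{P}(\mathrm{deg}_t(x) \le \tfrac{11}{21}\eta\log n) \le n^{-50\eta/441}$, the latter being the binding bound. A union bound over $x \in [n]$ then yields
\[
\mathbf{P}(D_t^c) \le 2n \cdot n^{-50\eta/441} = 2 n^{1 - 50\eta/441}.
\]
For $\eta > 50$, the exponent satisfies $50\eta/441 - 1 > 50 \cdot 50/441 - 1 > 4.6$, so one can take any $\rho$ with $2 + \beta = 2.3 < \rho < 50\eta/441 - 1$; for $n$ sufficiently large the factor of $2$ is absorbed.

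There is no real obstacle here — the generous slack between $c_1 = 11\eta/21$ and the mean $\eta$, combined with $\eta > 50$, leaves ample room for the union bound. The only thing to verify carefully is that the chosen $\rho$ exceeds $2 + \beta = 2.3$ (which it does by a wide margin) and that the stated Chernoff inequalities apply; both are standard and so the proof reduces to bookkeeping the exponents.
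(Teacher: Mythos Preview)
Your proof is correct and follows essentially the same approach as the paper: bound the binomial tails for a single vertex via Chernoff, then union bound over the $n$ vertices. The only cosmetic difference is that the paper derives the Chernoff bound from scratch via the moment generating function and optimizes the exponential tilt (obtaining the sharper large-deviation exponent $-\eta + c_i(1 \pm \log(c_i/\eta))$), whereas you invoke the standard packaged multiplicative Chernoff inequalities; both give exponents comfortably exceeding $2+\beta = 2.3$ once $\eta > 50$.
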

\begin{lemma}\label{connectedGraphLemma}
For every $n$ large enough and for every $0 < t \le n^2$
\begin{equation}
\mathbf{P}(C_{t}^c) \le \mathbf{P}(C_{n^2}^c) \le n^{-\frac{\eta}{2}+4}.
\end{equation}
\end{lemma}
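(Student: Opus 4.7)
The first inequality $\mathbf{P}(C_t^c) \le \mathbf{P}(C_{n^2}^c)$ is immediate from the inclusion $C_{n^2} \subset C_t$, which holds whenever $t \le n^2$, so the real work is to bound the latter probability. My plan is to reduce to a one-time disconnection estimate via the i.i.d.\ structure of $(G_k)_{k\in\mathbb{Z}}$ and then bound the disconnection probability of a single Erd\H{o}s-R\'enyi graph by a cut union bound.

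First I would use independence: by the union bound and stationarity,
\[
\mathbf{P}(C_{n^2}^c) \le \sum_{k=0}^{n^2-1}\mathbf{P}(G_k\text{ disconnected}) = n^2\,\mathbf{P}(G_0\text{ disconnected}),
\]
so it suffices to prove $\mathbf{P}(G_0\text{ disconnected}) \le C\,n^{2-\eta/2}$ for some absolute $C$. For this I use the standard cut expansion: $G_0$ is disconnected iff there is some $\emptyset \neq S \subsetneq [n]$ with no edge between $S$ and $S^c$. By complementing if necessary I restrict to $|S| = k$ with $1 \le k \le \lfloor n/2\rfloor$. For a fixed such $S$, the probability that no cut edge is present equals $(1-p)^{k(n-k)}$, and since $n-k\ge n/2$ and $(1-p)\le e^{-p}$,
\[
(1-p)^{k(n-k)}\le e^{-p k(n-k)}\le e^{-pkn/2} \le n^{-k\eta/2},
\]
using $pn/2 = \tfrac{\eta n\log n}{2(n-1)}\ge \tfrac{\eta\log n}{2}$.

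Plugging this and $\binom{n}{k}\le n^k$ into a union bound over subsets yields
\[
\mathbf{P}(G_0\text{ disconnected}) \le \sum_{k=1}^{\lfloor n/2\rfloor}\binom{n}{k}\,n^{-k\eta/2} \le \sum_{k=1}^{\infty} n^{k(1-\eta/2)},
\]
a geometric series with ratio $n^{1-\eta/2}<1$ (since $\eta>50\gg 2$), so it is dominated by its first term and bounded by $2n^{1-\eta/2}$ for all sufficiently large $n$. Combining with the i.i.d.\ union bound,
\[
\mathbf{P}(C_{n^2}^c) \le 2\,n^{3-\eta/2} \le n^{4-\eta/2}
\]
for $n$ large enough, which is the desired estimate.

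There is no genuine obstacle; the argument is a textbook Erd\H{o}s-R\'enyi connectivity computation, and the assumption $\eta>50$ gives far more room than needed (the $k=1$ term, corresponding to isolated vertices, is already the bottleneck and would give the sharper bound $n^{1-\eta}$, well below $n^{-\eta/2+2}$). The only point to be slightly careful about is keeping the factor $n/(n-1)$ bounded below by $1$ in the exponent, so that the $n^2$ coming from the outer union bound is comfortably absorbed.
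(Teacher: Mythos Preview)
Your proof is correct and follows essentially the same approach as the paper: a union bound over the $n^2$ time steps, then the standard cut union bound for a single Erd\H{o}s--R\'enyi graph using $(1-p)^{k(n-k)}\le e^{-pk(n-k)}$, $\binom{n}{k}\le n^k$, and $n-k\ge n/2$. The only cosmetic difference is that you sum the resulting geometric series explicitly (getting $2n^{1-\eta/2}$) whereas the paper simply writes the cruder bound $n^{-\eta/2+2}$; both lead to the stated $n^{-\eta/2+4}$ after the outer union bound.
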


Let us now turn our attention to (\ref{piconcequation}). We decompose the complement of the probability in (\ref{piconcequation}), using the notation established above:
\begin{align*}
\mathbf{P}\Bigg(\bigcup_{t = 0}^{n^2}(F_{u,t}^c \cup F_{l,t}^c)\Bigg) &= \mathbf{P}(F_{u,0}^c \cup F_{l,0}^c) + \sum_{t=1}^{n^2} \mathbf{P}(F_0,\dots,F_{t-1},(F_{u,t}^c \cup F_{l,t}^c)) \\
&\stackrel{\mathclap{(\ref{EventIsZeroEq})}}{\le} \sum_{t = 1}^{n^2} (\mathbf{P}(F_0,\dots,F_{t-1},F_{u,t}^c) + \mathbf{P}(F_0,\dots, F_{t-1},F_{l,t}^c)) \\
&\le \sum_{t = 1}^{n^2} (\mathbf{P}(F_{u,t}^c,F_{u,t-1}) + \mathbf{P}(F_{l,t}^c,F_{t-1},C_{t}) + \mathbf{P}(F_{l,t}^c,F_{t-1},C_{t}^c)) \\
&\le n^2\mathbf{P}(C_{n^2}^c) + \sum_{t = 1}^{n^2} (\mathbf{P}(F_{u,t}^c | F_{u,t-1}) + \mathbf{P}(F_{l,t}^c,F_{t-1},C_{t})).
\end{align*}
We have already established in Lemma~\ref{connectedGraphLemma} that $n^2\mathbf{P}(C_{n^2}^c)$ is small, so it remains to bound the final sum.
To that end we will use, for $t \le n^{1.2}$, 
\begin{equation}\label{boundsToProve1}
\mathbf{P}(F_{l,t}^c,F_{t-1},C_{t}) \le \mathbf{P}(F_{l,t}^c,C_{t}) 
\end{equation}
and for $t > n^{1.2}$
\begin{equation}\label{boundsToProve2}
\mathbf{P}(F_{l,t}^c,F_{t-1},C_{t}) \le \mathbf{P}(F_{l,t}^c,F_{t-1}) \le \mathbf{P}(F_{l,t}^c, D_t| F_{t-1}) + \mathbf{P}(D_t^c).
\end{equation}
Further note that 
\begin{equation}\label{boundsToProve3}
\mathbf{P}(F_{u,t}^c|F_{u,t-1}) \le \mathbf{P}(F_{u,t}^c, D_t | F_{u,t-1}) + \mathbf{P}(D_t^c).
\end{equation}
To prove Proposition~\ref{PiConcProp}, it now suffices to prove that the right-hand sides of (\ref{boundsToProve1}), (\ref{boundsToProve2}) and (\ref{boundsToProve3}) are each smaller than $n^{-\rho}$ for some $\rho > 2 + \beta.$ Lemma~\ref{degreeLemma} already bounds $\mathbf{P}(D_t^c) \le n^{-\rho}$ for a (potentially different) $\rho > 2 + \beta.$ 
\begin{remark}\label{largeEnough}
We compute bounds that have a leading term $n^{-\rho}$ for some $\rho > 2 + \beta$ independent of $n$. To simplify and remove all multiplicative constants we reduce $\rho$ very slightly and say the statement holds for all $n$ large enough.
This is not a limitation since Theorem~\ref{logUpperBound} only considers the limit $n \to \infty$.

We will restate explicitly that statements hold for all $n$ large enough only when it is important to clarify which constants do not depend on $n$. The constant $\rho$ varies from statement to statement, but it never depends on $n$.
\end{remark}

The following lemma gives a bound on the first term appearing on the right-hand side of (\ref{boundsToProve3}).

\begin{lemma}\label{descentLemma} There exists $\rho > 2 + \beta$, such that for every $n$ large enough and for every $t \in \{0, \dots, n^2\}$,
\begin{equation}\label{descentLemmaEq}
\mathbf{P}(F_{u,t}^c,D_t | F_{u,t-1}) \le n^{-\rho}.
\end{equation}

\end{lemma}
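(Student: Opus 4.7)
The key structural observation is that under $\mathbf{P}$, the graph $G_t$ (hence $P_t$) is independent of $\mathcal{F}_{t-1} := \sigma(P_s : s \le t-1)$, so in particular of $\pi_{t-1}$. The plan is to freeze the past and study the one-step update as a function of the fresh graph $G_t$. The case $t \le n^{1.1}$ is immediate: $\alpha_2^{(t)}/n = 1/2$, so $F_{u,t}$ holds $\mathbf{P}$-a.s.\ by Lemma~\ref{halfBoundLemma} and the bound is vacuous. For $t > n^{1.1}$, fix $x \in [n]$, work on $F_{u,t-1}$ after conditioning on $\mathcal{F}_{t-1}$, and expand (\ref{invarianceProperty}):
\[
\pi_t(x) = \tfrac{1}{2}\pi_{t-1}(x) + \sum_{y \neq x,\, y \sim_t x} \frac{\pi_{t-1}(y)}{2\,\mathrm{deg}_t(y)} \le \tfrac{1}{2}\pi_{t-1}(x) + \frac{W_x}{2 c_1 \log n} \quad \text{on } D_t,
\]
where $W_x := \sum_{y \ne x,\, y \sim_t x} \pi_{t-1}(y)$ and the last inequality uses $\mathrm{deg}_t(y) \ge c_1 \log n$. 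Setting $M := \alpha_2^{(t-1)}/n$ and using $\pi_{t-1}(x) \le M$, the event $\{\pi_t(x) > \alpha_2^{(t)}/n\} \cap D_t$ is contained in $\{W_x > \theta_t\}$, where $\theta_t := c_1(2\alpha_2^{(t)} - \alpha_2^{(t-1)})\log n / n$; a short case analysis over the three branches of (\ref{alphatwo}) yields $\theta_t \ge c_1(1-2\varepsilon)\alpha_2^{(t-1)}\log n/n > 0$ throughout.

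Conditional on $\mathcal{F}_{t-1}$, $W_x$ is a sum of independent random variables $\pi_{t-1}(y)\mathbbm{1}_{y \sim_t x}$, each bounded by $M$, with total variance bounded by $pM$ and expectation bounded by $p = \eta \log n /(n-1)$. If $\alpha_2^{(t-1)} \ge n/(c_1(1-2\varepsilon)\log n)$, then $\theta_t \ge 1 \ge W_x$, so the event is deterministically impossible. Otherwise, I apply Bernstein's inequality with deviation $\theta_t - \mathbb{E}[W_x] \ge (A-1)p$, where $A := \theta_t/p \ge (11/21)(1-2\varepsilon)\alpha_2^{(t-1)} > 3$ since $\alpha_2^{(t-1)} \ge \alpha_2^* = 7$. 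Combined with $p/M \approx \eta \log n/\alpha_2^{(t-1)}$ this yields
\[
\mathbf{P}(W_x > \theta_t \mid \mathcal{F}_{t-1}) \le \exp\!\biggl(-\frac{3(A-1)^2\, \eta \log n}{2\alpha_2^{(t-1)}(A+3)}\biggr).
\]
A union bound over $x \in [n]$ and integration of the conditional estimate over $\mathcal{F}_{t-1}$ restricted to $F_{u,t-1}$ then produce (\ref{descentLemmaEq}), provided this exponent dominates $(3+\beta)\log n$.

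The main obstacle is precisely this uniform lower bound on the Bernstein exponent across the admissible range $\alpha_2^{(t-1)} \in [\alpha_2^*, n/(c_1(1-2\varepsilon)\log n))$. Since $A$ is linear in $\alpha_2^{(t-1)}$ with positive slope, both $(A-1)^2$ and $\alpha_2^{(t-1)}(A+3)$ grow proportionally to $(\alpha_2^{(t-1)})^2$ for large values, so the ratio tends to the positive constant $3c_1(1-2\varepsilon)/(2\eta) = (11/14)(1-2\varepsilon)$; at the other endpoint $\alpha_2^{(t-1)} = 7$, $A \approx 11/3$, and the ratio evaluates to $\approx 8/35 \approx 0.23$ (the minimum on the interval). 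Hence the exponent is at least $c_4\eta \log n$ for some explicit $c_4 > 0.22$, and the assumption $\eta > 50$ ensures $c_4\eta - 1 > 2+\beta$ with room to spare, giving $\mathbf{P}(F_{u,t}^c, D_t \mid F_{u,t-1}) \le n^{1 - c_4\eta} \le n^{-\rho}$ with $\rho > 2+\beta$, as required.
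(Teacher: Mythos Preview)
Your proposal is correct and follows the same overall strategy as the paper: dispose of $t\le n^{1.1}$ via Lemma~\ref{halfBoundLemma}, use the one-step update together with the degree bound on $D_t$ to reduce $\{\pi_t(x)>\alpha_2^{(t)}/n\}$ to a large-deviation event for the weighted sum $W_x=\sum_{y\ne x}\pi_{t-1}(y)\mathbbm{1}_{x\sim_t y}$, and then exploit the independence of $G_t$ from $\mathcal F_{t-1}$ plus the weight control coming from $F_{u,t-1}$ to get exponential concentration.

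The only genuine difference is the concentration tool. The paper normalises the weights to $Z_y=(n-1)\pi_{t-1}(y)/\alpha_2^{(t)}\in(0,1]$ on $F_{u,t-1}$ and runs the multiplicative Chernoff argument with $\lambda=\log(1+\delta)$, using $(1+\delta)^{Z_y}\le 1+Z_y\delta$ for $Z_y\in(0,1]$; it then plugs in an explicit $\hat\delta$ in the two regimes of~(\ref{alphatwo}). You instead apply Bernstein with boundedness parameter $M=\alpha_2^{(t-1)}/n$ and variance $\le pM$, and handle the very early regime $\alpha_2^{(t-1)}\gtrsim n/\log n$ by the trivial observation $W_x\le 1<\theta_t$. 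Both routes hinge on exactly the same input from $F_{u,t-1}$ (a uniform cap on the weights), and both give an exponent of the form $c\,\eta\log n$ with $c$ bounded away from zero uniformly in $\alpha_2^{(t-1)}\ge\alpha_2^\ast$. Your Bernstein route is a bit more off-the-shelf and avoids the tailored MGF computation; the paper's Chernoff route yields slightly cleaner constants and does not need to split off the large-$\alpha_2^{(t-1)}$ case. Either way the conclusion and the required numerical check (here $c_4\eta>3+\beta$ using $\eta>50$) go through.
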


\begin{proof}[Proof of Lemma~\ref{descentLemma}]
For $t \le n^{1.1}$, $\alpha_2^{(t)} = n/2$, so $\mathbf{P}(F_{u,t}^c | F_{u,t-1}) = 0$ since $\pi_t(x) \le 1/2$ for every $x$, by Lemma~\ref{halfBoundLemma}. Therefore we can reduce the problem to $t > n^{1.1}$. 

If the event $D_t$ occurs, then, as a consequence of Lemma~\ref{matrixMultProp} and by definition of $P_t$, 
\begin{equation}\label{piDtEq}
\pi_t(x) = \frac{1}{2} \pi_{t-1}(x) + \sum_{y \ne x } \pi_{t-1}(y) \frac{\mathbbm{1}_{x\sim_t y}}{2\mathrm{deg}_t(y)} \le \frac{1}{2} \pi_{t-1}(x) + \sum_{y\ne x} \pi_{t-1}(y) \frac{\mathbbm{1}_{x \sim_t y}}{2c_1 \log n}.
\end{equation}
For arbitrary $x \in [n]$,
\begin{align}
\begin{split}\label{descentLemmaKeyEst}
\mathbf{P}(F_{u,t}^c&, D_t | F_{u,t-1}) \le n\mathbf{P}\Big(\pi_t(x) > \frac{\alpha_2^{(t)}}{n}, D_t \Big| F_{u,t-1}\Big) \\
&\stackrel{\mathclap{(\ref{piDtEq})}}{\le} n\mathbf{P}\Big(\frac{1}{2}\pi_{t-1}(x) + \sum_{y \neq x} \frac{1}{2}\pi_{t-1}(y) \frac{\mathbbm{1}_{x \sim_t y}}{c_1 \log n} > \frac{\alpha_2^{(t)}}{n} \Big| F_{u,t-1}\Big) \\
&\le n\mathbf{P}\Big(\frac{1}{2}\frac{n-1}{n}\frac{\alpha_2^{(t-1)}}{\alpha_2^{(t)}} + \sum_{y \neq x} \frac{1}{2} \frac{n-1}{\alpha_2^{(t)}}\pi_{t-1}(y) \frac{\mathbbm{1}_{x \sim_t y}}{c_1 \log n} > \frac{n-1}{n} \Big| F_{u,t-1} \Big) \\
&\le n \mathbf{P}\Bigg(\sum_{y \neq x} \frac{n-1}{\alpha_2^{(t)}} \pi_{t-1}(y) \mathbbm{1}_{x \sim_t y} > \frac{n-1}{n}\bigg(2 - \frac{\alpha_2^{(t-1)}}{\alpha_2^{(t)}}\bigg)c_1 \log n \Bigg| F_{u,t-1}\Bigg),
\end{split}
\end{align}
where the third line follows by multiplying by $n-1$ and dividing by $\alpha_2^{(t)}$, and using that conditioned on the event $F_{u,t-1}$, $\pi_{t-1}(x) \le \alpha_2^{(t-1)}/n$. 

Let $\Lambda_x = \sum_{y \neq x}(n-1) \pi_{t-1}(y) \mathbbm{1}_{x \sim_t y} /\alpha_2^{(t)}$ denote the sum in the last formula. It holds that 
\begin{equation}\label{proofIneqTwo}
\mathbf{E}[\Lambda_x | \pi_{t-1}] = \frac{n-1}{\alpha_2^{(t)}} \frac{\eta \log n}{n-1} (1 - \pi_{t-1}(x)) = \frac{\eta \log n}{\alpha_2^{(t)}}(1 - \pi_{t-1}(x)).
\end{equation}

To proceed, we need a concentration result for $\Lambda_x$ under $\mathbf{P}( \cdot | F_{u,t-1})$, namely for every $\delta > 0$,
\begin{equation}\label{concDeltaEst}
\mathbf{P}\big(\Lambda_x > (1+\delta)\mathbf{E}[\Lambda_x | \pi_{t-1}] \big| F_{u,t-1} \big) \le \exp\Big[{(\delta - (1+\delta)\log (1+\delta))\tfrac{\eta \log n}{2\alpha_2^{(t)}}}\Big].
\end{equation}
We postpone the proof of (\ref{concDeltaEst}) and first complete the proof of (\ref{descentLemmaEq}). We set
\begin{equation*}
\hat{\delta} \coloneqq
\begin{cases}
\frac{11}{21}(1 - 3\varepsilon)\alpha_2^{(t-1)} - 1, &\qquad\mbox{if } n^{1.1} < t \le n^{1.2} \text{ and } \alpha_2^{(t)} \ne \alpha_2^*, \\
\frac{77}{21}(1 - 3\varepsilon) - 1, &\qquad\mbox{if } n^{1.2} < t \le n^{2} \text{ or } \alpha_2^{(t)} = \alpha_2^*.
\end{cases}
\end{equation*}
Then it is easily shown by recalling $\beta = 0.3$, $c_1 = \frac{11}{21}\eta$, $\alpha_2^{*} = 7$, $\eta > 50$, $\varepsilon = 10^{-4}$, that
\begin{equation}\label{deltaCondi1}
\frac{n-1}{n}\bigg(2 - \frac{\alpha_2^{(t-1)}}{\alpha_2^{(t)}}\bigg) c_1 \log n \ge (1 + \hat{\delta})\mathbf{E}[\Lambda_x | \pi_{t-1}].
\end{equation}
Moreover there exists $\rho > 2 + \beta$ independent of $\hat{\delta}, t, n$, such that
\begin{equation}\label{deltaCondi2}
\exp\Big[{(\hat{\delta} - (1+\hat{\delta})\log (1+\hat{\delta}))\tfrac{\eta \log n}{2\alpha_2^{(t)}}}\Big] \le n^{-\rho -1}.
\end{equation}
By (\ref{descentLemmaKeyEst}) and (\ref{concDeltaEst}) it follows that there exists a $\rho > 2 + \beta$ such that for every $n$ large enough,
$$\mathbf{P}(F_{u,t}^c, D_t | F_{u,t-1}) \le n^{-\rho}.$$ 
Since this holds for every $n^{1.1} < t \le n^2$, (\ref{descentLemmaEq}) follows.

So let us now turn our attention to (\ref{concDeltaEst}). The inequality resembles a standard result about weighted sums of Bernoulli random variables achieved by the means of Chernoff bounds, and some variant and discussion of it can be found for example as Theorem 1 in~\cite{RAGHAVAN1988130}. However, the expression (\ref{concDeltaEst}) has a key difference to classical theory, namely that $\mathbf{E}[\Lambda_x | \pi_{t-1}]$ and the weights $(n-1)\pi_{t-1}(y)/\alpha_2^{(t)}$ are random variables themselves. If a weight can randomly be large while all other weights are comparatively small, the sum $\Lambda_x$ is mainly affected by the outcome of a single Bernoulli random variable, and thus does not concentrate.

However, as it turns out, the condition on the event $F_{u,t-1}$ is the deciding ingredient. When $t$ is small and $F_{u, t-1}$ is barely restricting the weights in size, by construction of $\alpha_2^{(t-1)}$, the right-hand side of (\ref{concDeltaEst}) is close to 1. When $t$ is large and the right-hand side tightens, the condition $F_{u,t-1}$ limits how large single weights can be, which improves concentration. In essence, with increasing $t$, (\ref{concDeltaEst}) improves itself accordingly..

To show (\ref{concDeltaEst}), we use the Markov inequality for conditional probability: For any event $F$ with positive probability, and random variables $X \ge 0$, $Y > 0$,
\begin{equation}\label{MarkovianInequality}
\mathbf{P}(X \ge Y | F) \le \frac{\mathbf{E}[\mathbbm{1}_F X/Y]}{\mathbf{P}(F)}.
\end{equation}
For any $\lambda > 0$, with $X = e^{\lambda \Lambda_x}$, $Y = \exp\big[\lambda(1 + \delta)\eta \log n(1 - \pi_{t-1}(x))/\alpha_2^{(t)}\big]$, $F = F_{u,t-1}$, the expectation on the right-hand side of (\ref{MarkovianInequality}) becomes
\begin{equation}\label{proofIneqThree}
\mathbf{E}\Big[e^{\lambda \Lambda_x} e^{-\eta\log n(1 - \pi_{t-1}(x))(1+\delta)\lambda/\alpha_2^{(t)}}\mathbbm{1}_{F_{u,t-1}} \Big].
\end{equation}
We emphasize here that $\pi_{t-1}(x)$ is random, however it is independent from all the Bernoulli random variables $\mathbbm{1}_{x \sim_t y}$ in $\Lambda_x$. In fact, the $\mathbbm{1}_{x \sim_t y}$ are also independent from $\mathbbm{1}_{F_{u,t-1}}$. The tower property yields
$$(\ref{proofIneqThree}) = \mathbf{E}\Bigg[e^{-\eta\log n(1 - \pi_{t-1}(x))(1+\delta)\lambda/\alpha_2^{(t)}}\mathbbm{1}_{F_{u,t-1}} \mathbf{E}\bigg[e^{\lambda \sum_{y \neq x}(n-1) \pi_{t-1}(y) \mathbbm{1}_{x \sim_t y} / \alpha_2^{(t)}} \bigg| \pi_{t-1}\bigg]\Bigg]$$
since $\mathbbm{1}_{F_{u,t-1}}$ is also $\sigma(\pi_{t-1})$-measurable. Exploiting the independence of the $\mathbbm{1}_{x \sim_t y}$'s
\begin{align*}
\mathbf{E}\bigg[e^{\lambda \sum_{y \neq x}(n-1) \pi_{t-1}(y) \mathbbm{1}_{x \sim_t y}/\alpha_2^{(t)}} \bigg| \pi_{t-1}\bigg] &= \prod_{y \neq x} \mathbf{E}\bigg[e^{\lambda (n-1)\pi_{t-1}(y) \mathbbm{1}_{x \sim_t y}/\alpha_2^{(t)}} \bigg| \pi_{t-1}\bigg] \\
&= \prod_{y \neq x} \Big(1 - p + pe^{\lambda (n-1) \pi_{t-1}(y)/\alpha_2^{(t)}}\Big),
\end{align*}
where the equalities hold $\mathbf{P}$-almost surely.
Let $Z_y \coloneqq (n-1)\pi_{t-1}(y)/\alpha_2^{(t)}.$
Using the inequality $(1+z) \le e^z$ and choosing $\lambda = \log(1 + \delta)$, the right-hand side is bounded by
\begin{align*}
\prod_{y \neq x} \Big(1 - p + pe^{\log(1+\delta) Z_y}\Big) &\le \prod_{y \neq x} \exp[p((1+\delta)^{Z_y} - 1)].
\end{align*}
On the event $F_{u,t-1}$, $Z_y \in (0,1].$ We can hence apply the inequality $(1+z)^l \le 1+lz$ for $l \in (0,1], z > 0$ to arrive at
\begin{equation}
\mathbbm{1}_{F_{u,t-1}} \prod_{y \neq x} \exp[p((1+\delta)^{Z_y} - 1)] \le \mathbbm{1}_{F_{u,t-1}} \prod_{y \neq x} \exp[p\delta Z_y].
\end{equation}
Note that $$\prod_{y \neq x} \exp[p\delta Z_y] = \exp[p\delta(n-1)(1 - \pi_{t-1}(x))/\alpha_2^{(t)}].$$
Using that $p = \frac{\eta \log n}{n-1}$ and  $1 - \pi_{t-1}(x) \ge \frac{1}{2}$, the calculation above implies that for $\lambda = \log(1+\delta)$
\begin{align}
\begin{split}
(\ref{proofIneqThree}) &\le \mathbf{P}(F_{u,t-1}) \Bigg[\frac{e^\delta}{(1+\delta)^{1+\delta}}\Bigg]^{\frac{\eta \log n}{2\alpha_2^{(t)}}} \\
&= \mathbf{P}(F_{u,t-1}) \exp\Big[{(\delta - (1+\delta)\log (1+\delta))\tfrac{\eta \log n}{2\alpha_2^{(t)}}}\Big].
\end{split}
\end{align}
This, together with (\ref{MarkovianInequality}), implies (\ref{concDeltaEst}).
\end{proof}
We can now show that the gradually ascending lower bound $F_{l,t}$ also holds with high probability.
\begin{lemma}\label{ascentLemmaHelper}
There exists $\rho > 2 + \beta$ such that
for every $n$ large enough and for every $n^{1.2} < t \le n^2$,
\begin{equation}\label{goodGraphCondition}
\mathbf{P}(F_{l,t}^c, F_{t-1}) \le n^{-\rho},
\end{equation}
and for every $0 \le t \le n^{1.2}$,
\begin{equation}\label{connectedGraphCondition}
\mathbf{P}(F_{l,t}^c, C_{t}) \le n^{-\rho}.
\end{equation}
\end{lemma}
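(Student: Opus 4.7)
The plan is to handle the three regimes in the definition (\ref{alphaone}) of $\alpha_1^{(t)}$ separately. For $0 \le t \le n^{1.1}$, $\alpha_1^{(t)} = 0$ makes $F_{l,t}$ automatic. For the \emph{seeding} range $n^{1.1} < t \le n^{1.2}$, (\ref{connectedGraphCondition}) requires the initial lower bound $\pi_t(x) \ge 1/(16\eta\log n)^{n-1}$. For the \emph{ascent} range $n^{1.2} < t \le n^2$, (\ref{goodGraphCondition}) asks that $\pi_t(x)$ grow by the factor $(1+\varepsilon)$ needed to track $\alpha_1^{(t)}$.

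For the seeding range, fix $x \in [n]$ and set $s = t-n$ (positive for all large $n$). By pigeonhole some $y^* \in [n]$ has $\pi_s(y^*) \ge 1/n$. The key deterministic input is a reachability fact: on $C_t$ the graphs $G_{s+1}, \dots, G_{s+n-1}$ are all connected, so setting $R_0 = \{y^*\}$ and $R_j = R_{j-1} \cup \{v : \exists u \in R_{j-1},\, v \sim_{s+j} u\}$, connectivity forces $|R_j| > |R_{j-1}|$ whenever $R_{j-1} \subsetneq [n]$, hence $R_{n-1} = [n]$. Therefore there is a trajectory $y^* = w_0, \dots, w_{n-1} = x$ with $w_j \in \{w_{j-1}\} \cup \{v : v \sim_{s+j} w_{j-1}\}$ at each step. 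On $D_{s+1} \cap \dots \cap D_t$ every such step contributes at least $1/(4\eta\log n)$, and the remaining step from $s+n-1$ to $t$ is a lazy stay at $x$ of probability $\ge 1/2$, so $P^{s,t}(y^*,x) \ge \frac{1}{2}(1/(4\eta\log n))^{n-1}$; using $4^{n-1} \ge 2n$ this yields $\pi_t(x) \ge (1/n) P^{s,t}(y^*,x) \ge 1/(16\eta\log n)^{n-1}$. The failure event in (\ref{connectedGraphCondition}) is thus contained in $\bigcup_{j=s+1}^t D_j^c$, which has probability $\le n^{-\rho}$ for suitable $\rho$ by Lemma~\ref{degreeLemma} and a union bound.

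For the ascent range, conditional on $F_{t-1} \cap D_t$, (\ref{piDtEq}) gives
$$\pi_t(x) \ge \tfrac{1}{2}\pi_{t-1}(x) + \frac{1}{2c_2 \log n}\sum_{y \ne x} \pi_{t-1}(y)\mathbbm{1}_{y \sim_t x}.$$
The key estimate is a Chernoff lower-tail bound on the random sum, analogous in spirit to (\ref{concDeltaEst}) in Lemma~\ref{descentLemma}: the weights $\pi_{t-1}(y)$ are $\sigma(\pi_{t-1})$-measurable and hence independent of the Bernoullis $\mathbbm{1}_{y \sim_t x}$, while $F_{u,t-1}$ caps each weight by $\alpha_2^{(t-1)}/n \le \alpha_2^*/n$, so a weighted Chernoff argument run as a lower-tail estimate yields
$$\mathbf{P}\Bigl(\textstyle\sum_{y \ne x}\pi_{t-1}(y)\mathbbm{1}_{y \sim_t x} < (1-\delta) p(1-\pi_{t-1}(x)) \,\Big|\, F_{u,t-1}\Bigr) \le n^{-\rho-1}$$
for some small fixed $\delta > 0$ and arbitrarily large $\rho$ (since $\eta > 50$). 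Combined with $p(1-\pi_{t-1}(x)) \ge \eta\log n/(2(n-1))$ (using Lemma~\ref{halfBoundLemma}) and $c_2 = 2\eta$, this gives $\pi_t(x) \ge \tfrac{1}{2}\pi_{t-1}(x) + (1-\delta)/(8(n-1))$ off a bad set of probability $\le n^{-\rho-1}$. Since $\alpha_1^{(t-1)} \le \alpha_1^* = 0.002$ is a tiny constant, the additive term $(1-\delta)/(8(n-1))$ alone already exceeds $(1+\varepsilon)\alpha_1^{(t-1)}/n$, so $\pi_t(x) \ge \alpha_1^{(t)}/n$. A union bound over $x \in [n]$ and addition of $\mathbf{P}(D_t^c)$ complete (\ref{goodGraphCondition}).

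The main obstacle is the Chernoff step in the ascent range: the weights $\pi_{t-1}(y)$ are random and depend on the entire history of the environment, so the uniform weight cap provided by $F_{u,t-1}$ is essential for concentration, paralleling the subtlety already handled in the proof of Lemma~\ref{descentLemma}. The reachability construction in the seeding range, while combinatorially neat, is purely deterministic on $C_t$ and presents no probabilistic difficulty beyond the union bound over the degree events.
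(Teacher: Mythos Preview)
Your overall architecture matches the paper's: trivial for $t\le n^{1.1}$, a deterministic reachability argument on the connectivity event for the seeding range, and a Chernoff lower-tail bound for the ascent range. The seeding argument is a valid variant of the paper's (you pick a single $y^*$ with $\pi_s(y^*)\ge 1/n$ and use $D_j$; the paper shows all-pairs reachability using a looser degree event $O_k$), and the union bound over the $n$ degree events is harmless because the exponent in Lemma~\ref{degreeLemma} is far above $3+\beta$ for $\eta>50$.

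The ascent range, however, contains a genuine quantitative gap. The weighted Chernoff lower tail you invoke gives, after normalising the weights by their cap $\alpha_2^*/n$, a bound of order
\[
\exp\!\Big(-\tfrac{\delta^2}{2}\cdot\tfrac{\eta\log n}{\alpha_2^*}\big(1-\tfrac{\alpha_2^*}{n}\big)\Big),
\]
exactly as in the paper's display (\ref{gammaConcRes}). For $\eta$ near the threshold $50$ and $\alpha_2^*=7$ this exponent is $\approx 3.57\,\delta^2\log n$, so to reach $n^{-\rho-1}$ with $\rho>2+\beta=2.3$ you need $\delta>0.96$, not ``some small fixed $\delta>0$''; and $\rho$ is certainly not ``arbitrarily large''---the constants are tight here, which is why the paper chooses $\hat\gamma=1-4\alpha_1^*\approx 0.992$. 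Fortunately your second requirement, that the additive term $(1-\delta)/(8(n-1))$ exceed $(1+\varepsilon)\alpha_1^{(t-1)}/n\le \alpha_1^*/n$, only forces $\delta<0.984$, so the window $\delta\in(0.96,0.984)$ is nonempty and your scheme can be repaired by taking $\delta$ close to $1$ rather than small. As written, though, the displayed concentration claim is false for small $\delta$ when $\eta$ is near $50$.
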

\begin{proof}
We first show (\ref{goodGraphCondition}). Its proof uses similar ideas as the proof of Lemma~\ref{descentLemma}. For $t > n^{1.2}$, intersecting $F_{l,t}^c, F_{t-1}$ with $D_t$ and $D_t^c$ yields
\begin{equation*} 
\mathbf{P}(F_{l,t}^c, F_{t-1}) \le \mathbf{P}(F_{l,t}^c, D_t | F_{t-1}) + \mathbf{P}(D_t^{c}),
\end{equation*}
and we know from Lemma~\ref{degreeLemma} that $\mathbf{P}(D_t^{c})$ is small. So it remains to consider $\mathbf{P}(F_{l,t}^c, D_t | F_{t-1}).$ We fix an arbitrary $x \in [n]$. Bounding $\pi_t(x) \ge \frac{1}{2}\pi_{t-1}(x) + \sum_{y \neq x} \pi_{t-1}(y) \frac{\mathbbm{1}_{x \sim_t y}}{2 c_2 \log n}$ and repeating the steps in Lemma~\ref{descentLemma}, we arrive at
\begin{align}\label{gammaEqBigSystem}
\begin{split}
\mathbf{P}(F_{l,t}^c&,D_t|F_{t-1}) \le n\mathbf{P}\Big(\frac{1}{2}\pi_{t-1}(x) + \sum_{y \neq x} \frac{1}{2}\pi_{t-1}(y) \frac{\mathbbm{1}_{x \sim_t y}}{2 c_2 \log n} < \frac{\alpha_1^{(t)}}{n} \Big| F_{t-1}\Big) \\
&\le n\mathbf{P}\Big(\frac{\alpha_1^{(t-1)}}{n} \frac{n-1}{\alpha_2^{(t)}} + \frac{1}{c_2 \log n} \sum_{y \neq x} \frac{n-1}{\alpha_2^{(t)}}\pi_{t - 1}(y) \mathbbm{1}_{x \sim_t y} < 2 \frac{n-1}{n} \frac{\alpha_1^{(t)}}{\alpha_2^{(t)}}\Big| F_{t-1}\Big) \\
&\le n\mathbf{P}\Big(\sum_{y \neq x} \frac{n-1}{\alpha_2^{(t)}} \pi_{t-1}(y) \mathbbm{1}_{x \sim_t y}< \frac{n-1}{n}\frac{2\alpha_1^{(t)}-\alpha_1^{(t-1)}}{\alpha_2^*} c_2 \log n \Big| F_{t-1}\Big)
\end{split}
\end{align}
where the second line follows by multiplying with $(n-1)/\alpha_2^{(t)}$. Since $t > n^{1.2}$ and hence $\alpha_2^{(t)} = \alpha_2^*$, we know conditional on the event $F_{t-1}$ that $(n-1)\pi_{t-1}(y)/\alpha_2^* \le 1.$ We write the left-hand side inside the last probability in (\ref{gammaEqBigSystem}) as $M_x = \sum_{y \neq x} (n-1)\pi_{t-1}(y)\mathbbm{1}_{x \sim_t y}/\alpha_2^*$. It holds that $$\mathbf{E}[M_x|\pi_{t-1}] = \eta\log n(1 - \pi_{t-1}(x))/\alpha_2^*.$$

For all $\gamma \in (0,1)$ we will show the concentration result 
\begin{equation}\label{gammaConcRes}
\mathbf{P}\big(M_x < (1- \gamma)\mathbf{E}[M_x | \pi_{t-1}] \big| F_{t-1}\big) \le \exp\Big(-\frac{\gamma^2}{2}\frac{\eta\log n}{\alpha_2^*}(1 - \tfrac{\alpha_2^*}{n})\Big).
\end{equation}
We postpone the proof of (\ref{gammaConcRes}) and first complete the proof of (\ref{goodGraphCondition}).
For $n^{1.2} < t \le n^{2}$, we fix
\begin{equation*}
\hat{\gamma} \coloneqq
\begin{cases}
1 - 4(1+2\varepsilon)\alpha_1^{(t-1)}, &\quad\mbox{if } n^{1.2} < t \le n^{2} - n \text{ and } \alpha_1^{(t)} \ne \alpha_1^*, \\
1 - 4\alpha_1^*, &\quad\mbox{if } n^{2} - n < t \le n^{2} \text{ or } \alpha_1^{(t)} = \alpha_1^*.
\end{cases}
\end{equation*}
If we are in the situation that $\alpha_1^{(t)} \ne \alpha_1^*$, then indeed $\alpha_1^{(t)} = (1 + \varepsilon)\alpha_1^{(t-1)}$ by definition, hence
$$2\alpha_1^{(t)} - \alpha_1^{(t-1)} = (1 + 2\varepsilon)\alpha_1^{(t-1)}$$
By recalling the values of $\varepsilon = 10^{-4}$ and $c_2 = 2\eta$, we see immediately that $\hat{\gamma} \in (0,1)$ and furthermore
\begin{equation}\label{gammaEquation}
(1 - \hat{\gamma}) \ge \frac{n-1}{n}\Big(2 \alpha_1^{(t)} - \alpha_1^{(t-1)}\Big)\frac{2c_2}{\eta}.
\end{equation}
In the case $\alpha_1^{(t)} = \alpha_1^*$, it is easy to see that (\ref{gammaEquation}) also holds. Furthermore, since $\alpha_2^* = 7$, it is a short computation to show that for $\rho = 2.4 > 2 + \beta$,
\begin{equation}\label{gammaConcRes2}
\exp\Big(-\frac{\hat{\gamma}^2}{2}\frac{\eta\log n}{\alpha_2^*}\big(1 - \frac{\alpha_2^*}{n}\big)\Big) \le n^{-\rho - 1}.
\end{equation}
By (\ref{gammaEqBigSystem})--(\ref{gammaConcRes2}), we conclude that
\begin{equation}
\mathbf{P}(F_{l,t}^c, D_t | F_{t-1}) \le n^{-\rho},
\end{equation}
and hence (\ref{goodGraphCondition}).

For the case $n^{1.2} < t \le n^2$ it remains to show the concentration result (\ref{gammaConcRes}). Similar to Lemma~\ref{descentLemma}, we show an upper bound for
\begin{equation}\label{expectationToBound}
\mathbf{E}\Big[e^{-\gamma M_x} e^{-\eta\log n(1 - \pi_{t-1}(x))(1-\gamma)\gamma/\alpha_2^*}\mathbbm{1}_{F_{t-1}} \Big].
\end{equation}
By the tower property,
\begin{equation*}
(\ref{expectationToBound}) = \mathbf{E}\Big[e^{-\eta\log n(1 - \pi_{t-1}(x))(1-\gamma)\gamma/\alpha_2^*}\mathbbm{1}_{F_{t-1}} \mathbf{E}\Big[e^{-\gamma \sum_{y \neq x} (n-1)\pi_{t-1}(y)\mathbbm{1}_{x \sim_t y}/\alpha_2^*}  \Big| \pi_{t-1}\Big] \Big].
\end{equation*}
We now consider the inner conditional expectation. We write $$Z_y \coloneqq (n-1)\pi_{t-1}(y)/\alpha_2^*$$ and it holds that
\begin{align*}
\mathbf{E}\Big[e^{-\gamma \sum_{y \neq x} Z_y\mathbbm{1}_{x \sim_t y}}  \Big| \pi_{t-1}\Big] &= \prod_{y \neq x} \mathbf{E}\Big[e^{-\gamma Z_y\mathbbm{1}_{x \sim_t y}}  \Big| \pi_{t-1}\Big] \\
&\le \prod_{y \neq x} (1 - p + pe^{-\gamma Z_y}) \le \exp\Big[{\sum_{y \neq x} - p + pe^{-\gamma Z_y}}\Big].
\end{align*}
Since $0 < Z_y^{2} \le Z_y \le 1$ on the event $F_{t-1}$, and $-1 + e^{-z} \le - z + z^2/2$ for $z > 0$, we obtain
\begin{equation}
\mathbbm{1}_{F_{t-1}}\exp\Big[{\sum_{y \neq x} - p + pe^{-\gamma Z_y}}\Big]\le \mathbbm{1}_{F_{t-1}}e^{\sum_{y \neq x} p(-\gamma Z_y + \gamma^2 Z_y/2)}.
\end{equation}
Noting that $\sum_{y \neq x} Z_y = (1 - \pi_{t-1}(x))\eta \log n / \alpha_2^*$ then finally yields
\begin{equation*}
(\ref{expectationToBound}) \le  \mathbf{P}(F_{t-1}) e^{-\gamma^2 \eta \log n (1 - \pi_{t-1}(x))/(2\alpha_2^*)} \le \mathbf{P}(F_{t-1})e^{-\gamma^2 \eta \log n (1 - \alpha_2^*/n)/(2\alpha_2^*)}
\end{equation*}
since $\pi_{t-1}(x) \le \alpha_2^*/n$ on the event $F_{t-1}$. Applying (\ref{MarkovianInequality}) completes the proof of (\ref{gammaConcRes}).

We now turn to (\ref{connectedGraphCondition}). For $t \le n^{1.1}$, by (\ref{alphaone}),  $\alpha_1^{(t)} = 0$, which makes $F_{l,t}^c$ trivially empty, so $\mathbf{P}(F_{l,t}^c) = 0$. 
For $n^{1.1} < t \le n^{1,2}$, let $O_k = \{\mathrm{deg}_k(x) \le 8\eta \log n, \, \forall x \in [n]\}.$ With the same argument as for the upper bound in the proof of Lemma~\ref{degreeLemma} (see Appendix~\ref{appendix:concentration}), we see that $\mathbf{P}(O_k^{c}) \le n^{-\rho-2}$ for some $\rho > 2 + \beta$.
Therefore
\begin{align*}
\mathbf{P}(F_{l,t}^c, C_{t}) &= \mathbf{P}\Big(F_{l,t}^c \Big| C_{t}, \bigcap_{k = 0}^{t-1} O_k\Big)\mathbf{P}\Big(C_{t}, \bigcap_{k = 0}^{t-1} O_k\Big) + \mathbf{P}\Big(F_{l,t}^c, C_{t}, \bigcup_{k = 0}^{t-1} O_k^c\Big) \\
&\le \mathbf{P}\Big(F_{l,t}^c \Big| C_{t}, \bigcap_{k = 0}^{t-1} O_k\Big) + n^{-\rho}.
\end{align*}
We now just consider $$\mathbf{P}\Big(F_{l,t}^c \Big| C_{t}, \bigcap_{k = 0}^{t-1} O_k\Big).$$
For $t > n^{1.1}$, the condition $$C_{t} \cap \Big(\bigcap_{k = 0}^{t-1} O_k\Big)$$ implies that the graph has been connected for at least the last $n^{1.1}$ time steps, with all degrees bounded by $8\eta\log n$. In such a graph, if $x \sim_s y$, then $P^{s,s+1}(x,y) \ge \frac{1}{16\eta \log n}.$ 

We say that there is a path from $x$ to $y$ of length $k$, starting at time $s$, if there exist $x_1, \dots, x_{k-1} \in [n]$, such that $x \sim_{s+1} x_1$, $x_1 \sim_{s+2} x_2, \dots, x_{k-1} \sim_{s+k} y$ and we write this path as
$$[x,y]_{s}^{s+k} = (x, x_1, \dots, x_{k-1}, y)_s^{s+k}.$$
We allow vertices on the path to be equal, that is to say $x_i = x_j$ for some $i \ne j$.

If there exists a path $[x,y]_s^{s+n-1}$, then
\begin{equation}\label{conEq1}
P^{s,s+n-1}(x,y) \ge \frac{1}{(16\eta\log n)^{n-1}}.
\end{equation}
If we can show that (\ref{conEq1}) holds for all $x,y$ on $C_{t} \cap \big(\bigcap_{k = 0}^{t-1} O_k\big)$, then 
$$\pi_t(x) = \sum_{y}\pi_{t-n+1}(y)P^{t-n+1,t}(y,x) \ge \frac{1}{(16\eta \log n)^{n-1}} \stackrel{(\ref{alphaone})}{=} \frac{\alpha_1^{(t)}}{n}$$ yields that $$\mathbf{P}\Big(F_{l,t}^c \Big| C_{t}, \bigcap_{k = 0}^{t-1} O_k\Big) = 0.$$
So it remains to show that there is indeed a path from $x$ to $y$ for any two vertices $x,y \in [n]$. Consider the following set process (for simplicity we start the process at time $k = 0$, but everything can be shifted accordingly): Let $T^{0}_x \coloneqq \{x\}$,
$$T^{k}_x \coloneqq \{v\in [n] : \exists z \in T_x^{k-1} \text{ s.t. } v \sim_{k} z\} \cup T^{k-1}_x$$
that is to say given a set of vertices $T^{k-1}_x$, $T^{k}_x$ is the set of all vertices that are neighbors of vertices in $T^{k-1}_x$ at time $k$.
This makes $T^{k}_x$ the set of all vertices that are reachable from $x$ in $k$ steps.
Since the graph is connected, there exists one edge that connects $T^{k}_x$ with its complement $(T^{k}_x)^c$ at time $k+1$ if the complement is not empty. In particular, this either implies $|T_x^{k+1}| > |T_x^{k}|$ or the complement is the empty set, that is to say $T^{k}_x = [n].$ Thus $T^{n-1}_x = [n]$, or equivalently, after $n-1$ steps the entire graph is reachable. This proves the existence of a path from $x$ to $y$ of length $n-1$ and completes the proof.
\end{proof}

The proof of Proposition~\ref{PiConcProp} is now a mere application of the lemmas above.

\begin{proof}[Proof of Proposition~\ref{PiConcProp}]
Inserting the Lemmas~\ref{degreeLemma}--\ref{ascentLemmaHelper} into the inequalities (\ref{boundsToProve1})--(\ref{boundsToProve3}) shows equation (\ref{piconcequation}).
\end{proof}

\subsection{Lower bound on $\Theta_t$}
In this section, we show that there exists a constant $\kappa > 0$, independent of $n$, such that $\Theta_t \ge \kappa$ for every $t \in \{0, \dots, n\}$ with high probability. This is the final step in the proof of Theorem~\ref{logUpperBound}, which is then immediate from Corollary~\ref{probabilisticMainThCor}. We recall $c_1 = \frac{11}{21}\eta, c_2 = 2\eta, \alpha_1^* = 0.002, \alpha_2^* = 7.$
\begin{proposition}\label{thetaBoundProp}
It holds that
\begin{equation}\label{thetaBound}
\lim_{n \to \infty} \mathbf{P}\Bigg(\Theta_t \geq \bigg(\frac{\alpha_1^*}{2\alpha_2^*-\alpha_1^*}\cdot \frac{\alpha_1^*}{32(\alpha_2^*)^2}\bigg)^2, \forall t \in \{0, \dots, n\}\Bigg) = 1.
\end{equation}
\end{proposition}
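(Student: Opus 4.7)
The plan is to separately bound the two factors making up each summand in the definition of $\Theta_t$. The first factor, $\tfrac{1}{2}g_s/(1-\tfrac{1}{2}g_s)$, is an increasing function of $g_s$, so by Proposition~\ref{PiConcProp} (applied via Remark~\ref{remarkFiveSeven}) the high-probability bounds $\alpha_1^*/n \le \pi_t(x) \le \alpha_2^*/n$ immediately give $g_s \ge \alpha_1^*/\alpha_2^*$ and hence the first factor is at least $\alpha_1^*/(2\alpha_2^* - \alpha_1^*)$ for every $s \in \{1,\dots,n\}$. This matches the first factor appearing in (\ref{thetaBound}) exactly, so no further work is needed for this half.

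The main work is bounding $\Phi_s^*$, which I plan to do via an isoperimetric argument for the Erd\H{o}s-R\'enyi graph $G_s$. Fix $s \in \{1,\dots,n\}$ and $S \subset [n]$ with $\pi_{s-1}(S) \le 1/2$. The $\pi$-bounds then force $|S^c| \ge n/(2\alpha_2^*)$, and on the degree event $D_s$ of Lemma~\ref{degreeLemma} every non-zero off-diagonal transition probability satisfies $P_s(x,y) \ge 1/(2c_2 \log n)$. Writing $E_s(S, S^c)$ for the number of $G_s$-edges between $S$ and $S^c$, one obtains
\[
Q_s(S, S^c) \ge \frac{\alpha_1^*}{n} \cdot \frac{E_s(S, S^c)}{2c_2 \log n}.
\]
Plugging in $p = \eta \log n/(n-1)$, $c_2 = 2\eta$, $\pi_{s-1}(S) \le \alpha_2^* |S|/n$, and the lower bound on $|S^c|$ then reduces the target bottleneck estimate $\Phi_s(S) \ge \alpha_1^*/(32(\alpha_2^*)^2)$ to showing, with high probability and uniformly in $s \in \{1,\dots,n\}$ and non-trivial $S$, the isoperimetric estimate $E_s(S, S^c) \ge \tfrac{1}{2}|S||S^c|p$.

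The main obstacle is thus this uniform isoperimetric bound. The key observation is that $G_s$ is independent of $\pi_{s-1}$ (which is a function of $(G_u)_{u \le s-1}$), so for each fixed $s$ and $S$, $E_s(S,S^c)$ is Binomial$(|S||S^c|, p)$ regardless of whatever $\pi_{s-1}$-event we condition on. The multiplicative Chernoff bound gives $\mathbf{P}(E_s(S,S^c) < \tfrac{1}{2}|S||S^c|p) \le \exp(-|S||S^c|p/8)$, and by the symmetry $E_s(S,S^c) = E_s(S^c,S)$ it suffices to consider $|S| = k \le n/2$, for which
\[
\binom{n}{k}\exp\bigl(-k(n-k)p/8\bigr) \le \exp\bigl(k\bigl(1 + \log(n/k) - \eta \log n/16\bigr)\bigr).
\]
The hypothesis $\eta > 50$ makes $\eta/16 > 3$ strictly dominate $\log(en/k)/\log n \le 2$ for all $k \ge 1$ and $n$ large, so each term decays like $n^{-ck}$ for some constant $c > 0$ independent of $k$. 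Summing over $k$ and union-bounding over $s \in \{1,\dots,n\}$ yields polynomially small failure probability. Combined with the drift bound this proves (\ref{thetaBound}); the large-$\eta$ hypothesis is used precisely here, to absorb both the entropy $\binom{n}{k}$ and the linear-in-$n$ union bound over~$s$.
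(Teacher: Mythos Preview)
Your proof is correct and in fact streamlines the paper's argument. Both proofs handle the $g_s$-factor identically via Proposition~\ref{PiConcProp}, and both reduce the bottleneck estimate to edge-counting after bounding $Q_s(S,S^c)$ below using the $\pi$-bounds and the degree event $D_s$. The difference lies in how the isoperimetric bound on $E_s(S,S^c)$ is obtained.

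The paper splits into two regimes. For large sets $S\in B$ with $|S|\ge n/(\eta\log n)$ it applies a Chernoff bound to $e_t(S,S^c)$ directly, but because it works all the way up to $|S|\le(1-1/(2\alpha_2^*))n$ without invoking symmetry, the exponent it extracts is only of order $|S|\,\eta\log n/(16\alpha_2^*)$, which does not beat the entropy $\binom{n}{|S|}$ when $|S|$ is small. It therefore treats small sets $S\in A$ separately, bounding the number of internal edges $e_t(S,S)$ from above and using $e_t(S,S^c)\ge |S|c_1\log n - e_t(S,S)$; this costs a page of calculus optimising an auxiliary function $q(s)$.

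You avoid the case split entirely by exploiting the symmetry $E_s(S,S^c)=E_s(S^c,S)$ to restrict the union bound to $1\le|S|\le n/2$. This buys you $n-|S|\ge n/2$ in the Chernoff exponent, giving decay $\exp(-k\eta\log n/16)$, and since $\eta>50$ makes $\eta/16>3$ strictly dominate the entropy factor $(en/k)^k$, a single union bound over all $k$ and all $s\le n$ suffices. One small point worth making explicit in a write-up: to survive the additional factor~$n$ from the union over~$s$ you need the per-$k$ term to be $O(n^{-ck})$ with $c>1$, which your numerics do give (the bracket is at most $1-2.125\log n\le -2\log n$ for $n$ large), but the sentence ``decays like $n^{-ck}$ for some constant $c>0$'' understates what you actually need and obtain.
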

\begin{proof}The proof adapts a strategy presented in Chapter 6 of \cite{durrett_2006}. in~\cite{durrett_2006} only the case of static graphs is considered, but our previous bounds on the degrees and on $\pi_t$ are strong enough to transfer the strategy to a dynamic situation.

In this proof, we bound $\Phi_t^*$ and $g_t$ from below. Let $$\mathcal{E} = \Big\{\frac{\alpha_1^*}{n} \le \pi_{t}(x) \le \frac{\alpha_2^*}{n}, \forall t\in\{0,\dots,n\}, \forall x \in [n]\Big\}$$ and recall from Proposition~\ref{PiConcProp} that $\mathbf{P}(\mathcal{E}) \ge 1 - n^{-\beta}$ for some constant $\beta > 0$. Observe that for any event $A$ that is deterministic on $\mathcal{E}$, that is to say $\mathbf{P}(A | \mathcal{E}) = 1$, we also have
\begin{align}\label{deterministicOnE}
\mathbf{P}(A) \ge \mathbf{P}(A | \mathcal{E}) \mathbf{P}(\mathcal{E}) \ge 1 - n^{-\beta}.
\end{align}

By definition for every set $S \subset [n]$, $Q_t(S^c,S) \geq 0$, so on the set $\mathcal{E}$, we have
\begin{equation}\label{firstPhitEstimate}
\Phi_t(S) \geq \frac{Q_t(S,S^c)}{2\pi_{t-1}(S)} \geq \frac{Q_t(S,S^c)}{2\alpha_2^*|S|}n.
\end{equation}
In particular (\ref{firstPhitEstimate}) holds with probability $1 - n^{-\beta}$ for all $t \in \{1, \dots, n\}$.

We now find a lower bound for $Q_t(S,S^c)$.
Let $e_t(x,S^c) = \sum_{y \in S^c} \mathbbm{1}_{y \sim_t x} $ be the number of edges from $x$ into $S^c$ at time $t$, and $e_t(S,S^c) = \sum_{x \in S} e_t(x,S^c).$ 
In analogy to (\ref{deterministicOnE}), 
$$\mathbf{P}(\cap_{t = 1}^{n} D_t \cap \mathcal{E}) \ge 1 - 2n^{-\beta},$$
and therefore an event that is deterministic on $\cap_{t = 1}^{n} D_t \cap \mathcal{E}$ will also have a probability greater than $1 - 2n^{-\beta}$. We henceforth write $\forall t$ meaning for all $t \in \{1, \dots, n\}$.
For every $S \subset [n]$ and $\forall t$, it is deterministic on $\cap_{t = 1}^{n} D_t \cap \mathcal{E}$ that
\begin{align*}
Q_t(S,S^c) &= \sum_{x \in S} \sum_{y \in S^c} \pi_{t-1}(x)P_t(x,y) = \sum_{x \in S} \pi_{t-1}(x) \sum_{\substack{y \in S^c \\ y \sim_t x}} \frac{1}{2 \mathrm{deg}_t(x)} \\
&\geq \sum_{x \in S} \pi_{t-1}(x) \frac{1}{2c_2\log n}e_t(x,S^c) \\
&\geq \frac{1}{2c_2\log n} \frac{\alpha_1^*}{n} \sum_{x \in S} e_t(x,S^c) %\\
= \frac{1}{2c_2\log n} \frac{\alpha_1^*}{n} e_t(S,S^c) \eqqcolon m_t(S).
\end{align*}
So in particular $\mathbf{P}(Q_t(S,S^c) \ge m_t(S), \forall S \subset [n], \forall t) \ge 1 - 2n^{-\beta}.$

Let us now consider $m_t(S)$. To find a lower bound, we bound $e_t(S,S^c)$. 
By definition of $\Phi_t^*$ we need only consider $S \subset [n]$ with $\pi_{t-1}(S) \le 1/2$.  We claim that, on $\mathcal{E}$, $\pi_{t-1}(S) \le 1/2$ implies $|S| \le \big(1 - \frac{1}{2\alpha_2^*}\big)n$. To prove this claim, it suffices to observe that on $\mathcal{E}$, for any $S \subset [n]$, 
$$\pi_{t-1}(S) \le |S|\frac{\alpha_2^*}{n}.$$
If $|S^c| < \frac{n}{2\alpha_2^*}$, then $\pi_{t-1}(S^c) < 1/2$ and hence $\pi_{t-1}(S) > 1/2.$

Therefore it suffices to consider $S \subset [n]$ such that $|S| \le \big(1 - \frac{1}{2\alpha_2^*}\big)n.$

Let $$B~\coloneqq~\left\{S : \frac{n}{\eta \log n} \leq |S| \leq \bigg(1 - \frac{1}{2\alpha_2^*}\bigg)n \right\}.$$ We consider all $S\in B$ such that $|S| = s$ for some $\frac{n}{\eta \log n} \le s \le (1 - \frac{1}{2\alpha_2^*})n$.
Note that $e_t(S,S^c) \sim \mathrm{Bin}(s(n-s),p)$ for each $s$. We observe that, on $\mathcal{E}$, $$\frac{s(n-s)p}{2} \ge \frac{s \eta \log n}{4 \alpha_2^*}.$$
There are $\binom{n}{s}$ sets $S \in B$ of size $s$, thus, using classical tail bounds for the binomial distribution (e.g. Lemma 2.8.5.~in~\cite{durrett_2006} with $z = p/2$),

\begin{align}\label{BSetProb}
\mathbf{P}\Big(\exists S \in B \text{ with } |S| = s, e_t(S,S^c) \leq \frac{s(n-s)p}{2}\Big) \leq \binom{n}{s}\exp\left(-s(n-s)\frac{\eta\log n}{8(n-1)}\right)
\end{align}
and since $n - s \geq \frac{1}{2\alpha_2^*}n \geq \frac{1}{2\alpha_2^*}(n-1)$ we can use $\binom{n}{s} \leq \frac{n^s}{s!} \leq n^ss^{-s}e^s$ to arrive at an upper bound of
\begin{align*}
(\ref{BSetProb}) &\leq \exp\left(-s\left[\frac{\eta \log n}{16\alpha_2^*} + \log(s/n) - 1\right]\right) \\
&\leq \exp\left(-\frac{n}{\eta\log n}\left[\frac{\eta \log n}{16\alpha_2^*} - \log(\eta\log n) - 1\right]\right),
\end{align*}
where we used twice in the second line that $s \geq \frac{n}{\eta \log n}.$ This goes to zero exponentially fast, in particular faster than $n^{-\beta-2}.$ Since it holds for every $s$, a union bound yields that
$$\mathbf{P}\Big(e_t(S,S^c) \ge \frac{|S| \eta\log n}{4\alpha_2^*}, \forall t, \forall S \in B\Big) \ge 1 - 2n^{-\beta}.$$
Combining this with the previous bound on $Q_t(S,S)$, $c_2 = 2\eta$, and with (\ref{firstPhitEstimate}) implies
\begin{equation}
\mathbf{P}\Big(\Phi_t(S) \ge \frac{\alpha_1^*}{32(\alpha_2^*)^2},\forall t, \forall S \in B\Big) \ge 1 - 5n^{-\beta}.
\end{equation}

It remains to show a similar bound for $e_t(S,S^c)$ for every $S \in A = \left\{S : 1 \leq |S| \leq \frac{n-1}{\eta\log n} \right\}.$ On the event $D_t$, we observe that for every $S \subset [n]$
$$e_t(S,S^c) = \sum_{x \in S} \deg_t(x) - e_t(S,S) \geq |S|\cdot c_1\log n - e_t(S,S).$$
So in order to arrive at a lower bound for $e_t(S,S^c)$, we prove an upper bound on $e_t(S,S)$.

A priori, note that for $|S| = s \leq \frac{n-1}{\eta\log n}$, we have $\mathbf{E}[e_t(S,S)] \leq \frac{s^2}{2}p \leq \frac{s}{2}$. As in the case $S \in B$, we use standard tail bounds for the binomial distribution to estimate
\begin{align}
\begin{split}\label{SinAestimate}
\mathbf{P}(\exists S \in A \text{ with } |S| = s&, e_t(S,S) \geq s \log\log n) \leq C \binom{n}{s}p^{s \log\log n} \binom{s^2/2}{s \log\log n} \\
&\leq C \left(\frac{ne}{s}\right)^s \frac{(s^2/2)^{s \log\log n}p^{s \log\log n}}{(s \log\log n)^{s \log\log n}e^{s \log\log n}} \\
&= C \exp(q(s))
\end{split}
\end{align}
for
$q(s) \coloneqq s\log(ne/s)+ s\log\log n\left[\log s + \log\left(\frac{\eta\log ne}{2(n-1) \log\log n }\right)\right].$
We show that $q(s)$ is maximized when $s = 1$.
To prove this, we take the derivative with respect to $s$ and find
\begin{multline*}
q'(s) = \log(n) - \log(s)\,+ \\ \log\log n\big[\log s + 
\log(e\eta\log n)-\log\big(2(n-1)\log\log n\big)\big] + \log\log n.
\end{multline*}
Differentiating with respect to $s$ once more yields
$$q''(s) = \frac{\log\log n - 1}{s} > 0,$$ so $q'(s)$ is increasing in $s$.
In particular, we show that for the maximal $s = \frac{n-1}{\eta \log n}$ we have $q'(s) < 0$ which then implies that $q(s)$ is decreasing in $s$, hence $q(1)$ is the maximum on $1 \leq s \leq \frac{n-1}{\eta \log n}.$
We compute
\begin{align*}
q'\left(\frac{n-1}{\eta \log n}\right) &= \log\left(\frac{n}{n-1}\right) + \log(\eta\log n)+\log\log n\big[2 - \log(2) - \log\log\log n\big] \\
&= \log\left(\frac{n}{n-1}\right) + \log(\eta)+\log\log n\big[3 - \log(2) - \log\log\log n\big],
\end{align*}
which is smaller than 0 for $n$ large enough. Thus $q(s) \le q(1)$ for every $s$ and hence
\begin{align*}
q(s) &\leq \log(ne) + \log\log n \cdot \log\left(\frac{\eta\log ne}{2 \log\log n (n-1)}\right) \\
&= 1+\log n+\log\log n\left[1 + \log(\eta) + \log\log n - \log(2(n-1)) - \log\log\log n\right] \\
&\leq-\log n\cdot[\log\log n-1]+1+\log\log n\cdot\left[1 + \log(\eta) + \log\log n\right] \\
&\leq-\log n\cdot[\log\log n-2]
\end{align*}
for $n$ large enough, so ultimately
\begin{equation}
C \exp(q(s)) \leq C n^{2 - \log\log n} \leq n^{-\beta-2}
\end{equation}
for $n$ large enough. 

In particular, using a union bound for all $s \le \tfrac{n-1}{\eta \log n}$ on (\ref{SinAestimate}), results in the lower bound on the probability
\begin{equation}
\mathbf{P}(e_t(S,S) \le |S| \log \log n, \forall t, \forall S \in A) \ge 1 - n^{-\beta - 1}.
\end{equation}
Since $e_t(S,S^c) \ge |S| c_1 \log n - e_t(S,S)$, it follows that 
$$e_t(S,S^c) \ge |S| (c_1 \log n - \log \log n) \ge |S| \frac{c_1\log n}{2}.$$ 
Combining all of the above yields
$$\mathbf{P}\Big(\Phi_t(S) \ge \frac{\alpha_1^*c_1}{8\alpha_2^* c_2}, \forall t, \forall S \in A \Big) \ge 1 - 5n^{-\beta}.$$
Observing that, for our particular choice of constants, 
$$\frac{\alpha_1^*c_1}{8\alpha_2^* c_2} >  \frac{\alpha_1^*}{32(\alpha_2^*)^2}$$
that is to say $\Phi_t(S)$ can be smaller for $S \in B$ than for $S \in A$, it follows that
\begin{equation}
\mathbf{P}\Big(\Phi_t^* \ge \frac{\alpha_1^*}{32(\alpha_2^*)^2}, \forall t\Big) \ge 1 - 10n^{-\beta}.
\end{equation}
Furthermore on $\mathcal{E}$, it holds that $\sqrt{\pi_0^{\mathrm{min}}\pi_t^{\mathrm{min}}} \geq \frac{\alpha_1^*}{n}$ and
$\left(\frac{1}{2} \frac{g_s}{1-\frac{1}{2}g_s}\right)^2 \geq \left(\frac{\alpha_1^*}{2\alpha_2^*-\alpha_1^*}\right)^2$. Hence 
\begin{equation}
\Theta_t = \min_{1 \leq s \leq t}\bigg(\frac{1}{2} \frac{g_s}{1-\frac{1}{2}g_s}\Phi^*_s\bigg)^2 \geq \bigg(\frac{\alpha_1^*}{2\alpha_2^*-\alpha_1^*}\cdot \frac{\alpha_1^*}{32(\alpha_2^*)^2}\bigg)^2
\end{equation}
with high probability and~(\ref{thetaBound}) follows.
\end{proof}
Theorem~\ref{logUpperBound} is an easy consequence:
\begin{proof}[Proof of Theorem~\ref{logUpperBound}]
From Proposition~\ref{PiConcProp} giving a lower bound on $\pi_t^{\mathrm{min}}$ and Proposition~\ref{thetaBoundProp} providing a lower bound on $\Theta_t$, the assumptions of Corollary $\ref{probabilisticMainThCor}$ are satisfied, which implies (\ref{logUpperBoundEq}).
\end{proof}

\subsection{A lower bound on mixing time}

We already know from Proposition~\ref{PiConcProp} that $\pi_t(x)~\in~[\alpha_1^*/n, \alpha_2^*/n]$ for all $x \in [n]$, $t \in \{0, ..., n\}$ w.h.p., which also allows us to employ a classical argument for a lower bound on the mixing time. Namely, a random walk cannot mix if a substantial section of the graph is not accessible. This also shows that the intuition (see Remark~\ref{falseIntuition}) that the walk should mix in just a few steps is wrong, as long as $p$ is not too large.

\begin{theorem}\label{logLowerBound}
Let $\eta > 50$ independent of $n$, $p = \frac{\eta \log n}{n-1}.$ 
Let $\varepsilon \in (0, 1/2)$. There exists $c > 0$ such that
$$t_\mathrm{mix}(\varepsilon,0) \geq c \frac{\log n}{\log\log n}$$
with high probability.
\end{theorem}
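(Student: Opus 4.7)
The plan is to use the classical support-counting lower bound on mixing time: after few steps the distribution $P^{0,t}(x,\cdot)$ is supported on a small set of vertices, whereas by Proposition~\ref{PiConcProp} $\pi_t$ spreads mass roughly uniformly with $\pi_t(y) \le \alpha_2^*/n$, so $P^{0,t}(x,\cdot)$ cannot be $\varepsilon$-close in total variation to $\pi_t$. The intuition in Remark~\ref{falseIntuition} fails precisely because the quenched walk on a single sampled sequence of graphs has bounded-degree neighbourhoods.

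First, I would invoke Lemma~\ref{degreeLemma} together with a union bound over $s \in \{1,\dots,n\}$ to conclude that the event $\mathcal{D}\coloneqq \bigcap_{s=1}^{n} D_s$ has probability at least $1 - n \cdot n^{-\rho}\to 1$, where $\rho>2+\beta$. Combined with Proposition~\ref{PiConcProp} (more precisely (\ref{piconcequation2})), the event $\mathcal{D}\cap \mathcal{E}$, where $\mathcal{E}\coloneqq \{\pi_t(y) \le \alpha_2^*/n,\ \forall y \in [n],\ \forall t \in\{0,\dots,n\}\}$, holds with high probability. On $\mathcal{D}$ every vertex has degree at most $c_2 \log n = 2\eta\log n$ at every time $s\le n$.

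Next, fix an arbitrary $x \in [n]$ and let $A_t \coloneqq \{y \in [n] : P^{0,t}(x,y) > 0\}$ be the support of $P^{0,t}(x,\cdot)$. Since $P_{s+1}(v,y) > 0$ if and only if $y = v$ or $v \sim_{s+1} y$, we have on $\mathcal{D}$
\begin{equation*}
|A_{s+1}| \le |A_s| + \sum_{v \in A_s} \deg_{s+1}(v) \le (1 + c_2 \log n)\,|A_s|.
\end{equation*}
Iterating from $|A_0|=1$ gives $|A_t| \le (1 + c_2 \log n)^t$ for every $t\le n$. Combined with $\pi_t(A_t) \le \alpha_2^* |A_t|/n$ on $\mathcal{E}$, this yields
\begin{equation*}
\|P^{0,t}(x,\cdot) - \pi_t(\cdot)\|_{\mathrm{TV}} \;\ge\; 1 - \pi_t(A_t) \;\ge\; 1 - \frac{\alpha_2^*(1+c_2\log n)^t}{n}.
\end{equation*}

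Finally, I would pick $c \in (0,1)$ small enough that for $t_* \coloneqq \lfloor c \log n/\log\log n \rfloor$ we have $\alpha_2^*(1+c_2\log n)^{t_*}/n \le 1-\varepsilon$ for $n$ large enough. Since
\begin{equation*}
(1+c_2\log n)^{t_*} = \exp\bigl(t_* \log(1 + c_2 \log n)\bigr) = n^{c(1+o(1))},
\end{equation*}
any $c<1$ works (and in fact the inequality $\varepsilon<1/2$ gives substantial slack). This gives $\|P^{0,t_*}(x,\cdot) - \pi_{t_*}(\cdot)\|_{\mathrm{TV}} > \varepsilon$ on the high-probability event $\mathcal{D}\cap\mathcal{E}$, hence $d(0,t_*)>\varepsilon$ and so $t_{\mathrm{mix}}(\varepsilon,0) > t_*$ with high probability. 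I do not foresee any serious obstacle: all the concentration ingredients are already in hand, and the argument is a one-shot consequence of the degree bound and Proposition~\ref{PiConcProp}.
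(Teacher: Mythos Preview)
Your proposal is correct and follows essentially the same approach as the paper: bound the support of $P^{0,t}(x,\cdot)$ by iterating the degree bound $|A_{s+1}|\le(1+c_2\log n)|A_s|$, then use the upper bound $\pi_t(y)\le\alpha_2^*/n$ from Proposition~\ref{PiConcProp} to conclude that the total variation distance exceeds $\varepsilon$ (in fact $1/2$) as long as $t\le c\log n/\log\log n$. The paper phrases the last step via the contrapositive $|S|>(1-\tfrac{1}{2\alpha_2^*})n\Rightarrow\pi_t(S)>1/2$, but this is the same estimate.
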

\begin{proof}
As in the proof of Proposition~\ref{thetaBoundProp}, we note:
If $$|S| > \left(1 - \frac{1}{2\alpha_2^*}\right)n$$
then (w.h.p.)
$$\pi_t(S) > \frac{1}{2}$$ for all $0 \leq t \leq n$ . In particular, if only vertices in $S^c$ are reachable within $t$ steps from $x$, then 
$$\|P^{0,t}(x, \cdot) - \pi_t(\cdot)\|_{\mathrm{TV}} \geq 1 - \pi_t(S^c) = \pi_t(S) > \frac{1}{2}.$$
Let $T^{k}_x$ be the set of vertices reachable from $x$ in $k$ steps starting at time 0, in the sense that $T^{k}_x = \{ y \in [n] : \text{there exists a path }[x,y]_0^k\}$ (see the proof of Lemma~\ref{ascentLemmaHelper}). Since every vertex (w.h.p.) has at most $c_2 \log n$ neighbors,
$$|T^1_x| \leq c_2 \log n + 1.$$
Even if all those vertices get an entirely new set of neighbors, the maximum number of reachable vertices after two steps is still bounded by
$|T^2_x| \leq (c_2 \log n + 1)^2.$ Iteratively $|T^k_x| \leq (c_2 \log n + 1)^k.$ For $k$ small enough, this implies that the random walk is confined to a small set of vertices. In particular, if
$$k \leq \frac{\log\left(\frac{1}{2\alpha_2^*}n \right)}{\log\left(1 + c_2 \log n \right)},$$ then
$|T^k_x| \leq n/(2\alpha_2^*)$ and thus $d(0,t) > 1/2$ with high probability.
We can choose $c > 0$ such that $$t_\mathrm{mix}(\varepsilon,0) \geq c \frac{\log n}{\log\log n}$$
with high probability.
\end{proof}

\appendix

\section{Existence proofs}\label{appendix:existence}

In order to prove Lemma~\ref{Lemmaexist}, we need the following simple observation on total variation. 
\begin{lemma}\label{convexityLemma}
Let $$\hat{d}(s,t) \coloneqq \sup_{\mu,\nu} \|\mu P^{s,t} - \nu P^{s,t}\|_\mathrm{TV}$$
where the supremum is taken over all probability measures on $[n]$. Then the supremum is attained, and in fact it holds that
$$\hat{d}(s,t) = \delta(P^{s,t}) = \sup_{x,y} \|P^{s,t}(x,\cdot) - P^{s,t}(y,\cdot)\|_\mathrm{TV}.$$
\end{lemma}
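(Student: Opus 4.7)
The equality $\delta(P^{s,t}) = \sup_{x,y}\|P^{s,t}(x,\cdot)-P^{s,t}(y,\cdot)\|_{\mathrm{TV}}$ is immediate from the very definition of $\delta$ given earlier in the paper, so the content of the lemma is entirely contained in the identity $\hat d(s,t)=\sup_{x,y}\|P^{s,t}(x,\cdot)-P^{s,t}(y,\cdot)\|_{\mathrm{TV}}$. The plan is to prove this by two inequalities, and to obtain attainment of the supremum for free from the finiteness of $[n]$.

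For the lower bound, I would note that point masses $\delta_x,\delta_y$ are probability measures on $[n]$, and $\delta_x P^{s,t}=P^{s,t}(x,\cdot)$, so
\begin{equation*}
\hat d(s,t)\ge \sup_{x,y}\|\delta_x P^{s,t}-\delta_y P^{s,t}\|_{\mathrm{TV}}=\sup_{x,y}\|P^{s,t}(x,\cdot)-P^{s,t}(y,\cdot)\|_{\mathrm{TV}}.
\end{equation*}

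For the upper bound, I would use the standard convexity trick. For any two probability measures $\mu,\nu$ on $[n]$, using $\sum_x\mu(x)=\sum_y\nu(y)=1$, I can write
\begin{equation*}
\mu P^{s,t}-\nu P^{s,t}=\sum_{x,y}\mu(x)\nu(y)\bigl[P^{s,t}(x,\cdot)-P^{s,t}(y,\cdot)\bigr].
\end{equation*}
Applying the triangle inequality for the total variation norm and then bounding each term by the supremum gives
\begin{equation*}
\|\mu P^{s,t}-\nu P^{s,t}\|_{\mathrm{TV}}\le\sum_{x,y}\mu(x)\nu(y)\|P^{s,t}(x,\cdot)-P^{s,t}(y,\cdot)\|_{\mathrm{TV}}\le\sup_{x,y}\|P^{s,t}(x,\cdot)-P^{s,t}(y,\cdot)\|_{\mathrm{TV}}.
\end{equation*}
Taking the sup over $\mu,\nu$ yields the reverse inequality, completing the identity.

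Finally, since $[n]$ is finite the supremum over pairs $(x,y)\in[n]^2$ is attained at some maximising pair, so the corresponding point masses $\delta_x,\delta_y$ realise $\hat d(s,t)$, giving attainment. There is no real obstacle here; the only subtle point is recognising the bilinear decomposition $\mu P-\nu P=\sum_{x,y}\mu(x)\nu(y)[P(x,\cdot)-P(y,\cdot)]$ that turns the sup over probability measures into a convex combination of sups over deltas.
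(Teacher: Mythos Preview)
Your proof is correct and is precisely the standard convexity argument; the paper itself does not give an independent proof but simply refers to Exercise~4.1 (and its solution in Appendix~D) of Levin--Peres--Wilmer, which is exactly the argument you wrote out.
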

\begin{proof}See Exercise 4.1 in~\cite{LevinPeresWilmer2006} for the statement, and Appendix D therein for the proof.
\end{proof}

\begin{proof}[Proof of Lemma~\ref{Lemmaexist}]
Let $\varepsilon > 0$. By assumption, there exists $u < t$, such that $\delta(P^{u,t}) \leq \varepsilon/2.$
Choose any $s,r \in \mathbb{Z}$ with $s < r < u$.
Then, denoting $\mu_x = P^{s,u}(x,\cdot)$ and $\nu_y = P^{r,u}(y,\cdot),$
\begin{align*}
\sup_{x,y} \|P^{s,t}(x,\cdot) - P^{r,t}(y,\cdot)\|_\mathrm{TV} &= \sup_{x,y} \|(P^{s,u}P^{u,t} )(x,\cdot) - (P^{r,u}P^{u,t})(y,\cdot)\|_\mathrm{TV} \\
&= \sup_{x,y} \|\mu_xP^{u,t} - \nu_yP^{u,t}\|_\mathrm{TV} \\
&\leq \sup_{x,y} \|P^{u,t}(x,\cdot) - P^{u,t}(y,\cdot)\|_\mathrm{TV} \\
&= \delta(P^{u,t}) \leq \varepsilon/2.
\end{align*}
where the inequality in the third line follows from Lemma~\ref{convexityLemma}.
In particular, for any $x,y \in [n]$ we have 
$$\sum_z |P^{s,t}(x,z) - P^{r,t}(y,z)| \leq \varepsilon,$$
so the matrix products $(P^{s,t})_{s \leq t}$ are a Cauchy sequence as $s \to -\infty$. By completeness of the space of matrices, the Cauchy sequence converges and thus there exists a matrix $Q^t$ with $\lim_{s \to -\infty} P^{s,t}(x,y) = Q^t(x,y).$ Moreover $\delta(Q^t) = 0$, so it is a rank 1 matrix.
\end{proof}
We now show the technical results of Lemma~\ref{lemma3}, that $d(s,t)$ is decreasing in $t$ and that it is closely related to $\delta(P^{s,t}).$
\begin{proof}[Proof of Lemma~\ref{lemma3}] To show (a), simply consider 
\begin{align*}
\sup_{x} \|P^{u,t}(x,\cdot) - \pi_{t}(\cdot)\|_\mathrm{TV} &= \sup_{x}\|(P^{u,s}P^{s,t})(x,\cdot) - (\pi_sP^{s,t})(\cdot)\|_\mathrm{TV} \\
&\le \sup_{x} \|P^{u,s}(x,\cdot) - \pi_s(\cdot)\|_\mathrm{TV} 
\end{align*}
by the definition of the total variation distance and applying the triangle inequality. So $d(u,s) \le d(u,t)$ for all $u \le s \le t$.

(b) is shown by using the same arguments as in Lemma~\ref{convexityLemma} (see Exercise 4.1. in Appendix~D of~\cite{LevinPeresWilmer2006}). We omit the details here because the statement is not used in the present paper. 

To show (c), remark that $d(s,t) \le \delta(P^{s,t})$ (see again Lemma~\ref{convexityLemma}). To finally prove  $\delta(P^{s,t})~\le~2d(s,t)$, write
\begin{align*}
\sup_{x,y} \sum_{z} |P^{s,t}(x,z) - P^{s,t}(y,z)| &= \sup_{x,y} \sum_{z} |P^{s,t}(x,z) - \pi_t(z) + \pi_t(z) - P^{s,t}(y,z)| \\
&\le \sup_x \sum_{z} |P^{s,t}(x,z) - \pi_t(z)| + \sup_y \sum_{z}|P^{s,t}(y,z) - \pi_t(z)|
\end{align*}
which completes the proof.
\end{proof}

\section{Erd\H{o}s-R\'enyi graphs}\label{appendix:concentration}
In this section, we first show Lemma~\ref{degreeLemma}, giving degree bounds for connected Erd\H{o}s-R\'enyi graphs. Similar results are standard (cf.~Lemma 6.5.2 in~\cite{durrett_2006}), but here we compute an explicit decay rate of the right-hand side for our choice of parameters.
\begin{proof}[Proof of Lemma~\ref{degreeLemma}] Note that $\mathrm{deg}_t(x)\sim \mathrm{Bin}(n-1,p).$ For any $\lambda > 0$, $c_2 > \eta$
\begin{align*}
\mathbf{P}(\mathrm{deg}_t(x) \ge c_2 \log n) &\le e^{\log(1-p+pe^\lambda)(n-1)}n^{-\lambda c_2} \le e^{(-p+pe^\lambda)(n-1)}n^{-\lambda c_2} \\
&= n^{-\eta + \eta e^\lambda - \lambda c_2}.
\end{align*}
We can minimize this expression by choosing $\lambda = \log (\frac{c_2}{\eta})$ to arrive at
\begin{equation}
\mathbf{P}(\mathrm{deg}_t(x) \ge c_2 \log n) \le n^{-\eta + c_2(1 - \log(\frac{c_2}{\eta}))}.
\end{equation}
Similarly, for $0 < c_1 < \eta$, we can establish the lower bound by
\begin{align*}
\mathbf{P}(\mathrm{deg}_t(x) \le c_1 \log n) \le n^{-\eta + \eta e^{-\lambda}+\lambda c_1},
\end{align*}
which we can optimize with $\lambda = \log(\frac{\eta}{c_1})$ to arrive at
\begin{equation}
\mathbf{P}(\mathrm{deg}_t(x) \le c_1 \log n) \le n^{-\eta + c_1(1 + \log(\frac{\eta}{c_1}))}.
\end{equation}
Since $c_1 = \frac{11}{21} \eta$, $c_2 = 2\eta$ and $\eta > 50$, applying a union bound over all $n$ vertices yields the desired result.
\end{proof}
We now show Lemma~\ref{connectedGraphLemma}, that $n^2$ independent Erd\H{o}s-R\'enyi graphs are all connected with high probability and again compute the decay rate.
\begin{proof}[Proof of Lemma~\ref{connectedGraphLemma}] There are countless proofs of connectivity (for example Theorem 2.8.1. in~\cite{durrett_2006}). The argument presented here specifically uses that the graphs are far above the connectivity threshold (recall $\eta > 50$ by assumption) to get a quantitative bound on the probability.

We consider the connectivity at a time $k \in \{0, \dots, n^2 - 1\}$. Let $K$ be the number of ways the graph can be divided at time $k$ into two non-empty subgraphs that are disconnected. Then clearly $\mathbf{P}(\text{Graph not connected}) = \mathbf{P}(K \ge 1) \le \mathbf{E}[K]$.
We estimate
\begin{align*}
\mathbf{E}[K] &= \sum_{i = 1}^{n/2} (1 - p)^{i(n-i)}\binom{n}{i} \le \sum_{i=1}^{n/2}e^{-pi(n-i)}n^i \\
&= \sum_{i=1}^{n/2} e^{-pi(n-i)+i\log n} = \sum_{i=1}^{n/2}e^{(-\frac{\eta}{n-1}i(n-i)+i)\log n} \\
&\le \sum_{i=1}^{n/2} e^{(-\eta i(\frac{n}{n-1} - \frac{n}{2(n-1)})+i)\log n} = \sum_{i=1}^{n/2}n^{-i(\frac{\eta}{2}\frac{n}{n-1} - 1)} \\
&\le n^{-\frac{\eta}{2}+2}.
\end{align*}
Union bound for all $n^2$ times yields the result.
\end{proof}
\bibliographystyle{amsplain}
\bibliography{basics}

\end{document}